\def\C{\mathbb C}
\def\R{\mathbb R}
\def\Y{\mathbb Y}
\def\0{\mathbf 0}
\newtheorem{theorem}{Theorem}[section]
\newtheorem{proposition}[theorem]{Proposition}
\newtheorem{prop}[theorem]{Proposition}
\newtheorem{corollary}[theorem]{Corollary}
\newtheorem{lemma}[theorem]{Lemma}
\theoremstyle{definition}
\newtheorem{definition}[theorem]{Definition}
\newtheorem{remark}[theorem]{Remark}
\newtheorem{example}[theorem]{Example}
\DeclareMathOperator{\real}{Real}
\newcommand{\bighplus}{
  \mathop{
    \vphantom{\bigoplus} 
    \mathchoice
      {\vcenter{\hbox{\resizebox{\widthof{$\displaystyle\bigoplus$}}{!}{$\boxplus$}}}}
      {\vcenter{\hbox{\resizebox{\widthof{$\bigoplus$}}{!}{$\boxplus$}}}}
      {\vcenter{\hbox{\resizebox{\widthof{$\scriptstyle\oplus$}}{!}{$\boxplus$}}}}
      {\vcenter{\hbox{\resizebox{\widthof{$\scriptscriptstyle\oplus$}}{!}{$\boxplus$}}}}
  }\displaylimits 
}
\newcommand{\Addresses}{{
  \bigskip
  \footnotesize

  Laura Anderson, \textsc{Department of Mathematical Sciences, Binghamton University,
    Binghamton, NY 13902, USA}\par\nopagebreak
\texttt{laura@math.binghamton.edu}
    \bigskip

    James F. Davis, \textsc{Department of Mathematics, Indiana University,
    Bloomington, IN 47405, USA}\par\nopagebreak
  \texttt{jfdavis@indiana.edu}
  }}
\begin{document}

\title{Hyperfield Grassmannians}
\author{Laura Anderson
\and
James F. Davis\thanks{Partially supported by NSF grant DMS 1615056}
}
\date{}
\maketitle

\begin{abstract} In a recent paper Baker and Bowler introduced matroids over hyperfields,  offering a common generalization of matroids, oriented matroids, and linear subspaces of based vector spaces.  This  paper introduces the notion of a topological hyperfield and explores the generalization of Grassmannians and realization spaces to this context, 
particularly in relating the (hyper)fields $\R$ and $\C$ to hyperfields arising in matroid theory and in tropical geometry.
\end{abstract}

\section{Introduction}\vspace*{-4pt}

In a recent paper~\cite{Baker-Bowler} Baker and Bowler introduced
\emph{matroids over hyperfields}, a compelling unifying theory that
spans, among other things,\vspace*{-4pt}
\begin{enumerate}%
\item
matroid theory,\vspace*{-1pt}
\item
subspaces of based vector spaces, and\vspace*{-1pt}
\item
``tropical'' analogs to subspaces of vector spaces.
\end{enumerate}
A hyperfield is similar to a field, except that addition is multivalued.
Such structures may seem exotic, but, for instance, Viro has argued
persuasively \cite{Viro}, \cite{Viro2} for their naturalness
in tropical geometry, and Baker and Bowler's paper elegantly
demonstrates how matroids and related objects (e.g.~oriented matroids,
valuated matroids) can be viewed as ``subspaces of $F^{n}$, where
$F$ is a hyperfield''. Baker and Bowler's work on hyperfields was purely
algebraic and combinatorial; no topology was introduced.

The purpose of this paper is to explore topological aspects of matroids
over hyperfields, specifically Grassmannians over hyperfields and
realization spaces of matroids over hyperfields. The idea of introducing
a topology on a hyperfield is problematic from the start: as Viro
discusses in~\cite{Viro}, there are complications even in defining
the notion of a continuous multivalued function, for example, hyperfield
addition. That being said, we define a notion of a topological
hyperfield that suffices to induce topologies on the related
Grassmannians and sets of realizations, so that continuous homomorphisms
between topological hyperfields induce continuous maps between related
spaces such as Grassmannians.

We focus on Diagram~(\ref{3-by-3}) of hyperfield morphisms (which are
all continuous with respect to appropriate topologies), categorical
considerations, and the induced realization spaces and maps of
Grassmannians. Most of this collection of hyperfields was discussed at
length in~\cite{Viro} and~\cite{Viro2}. Loosely put, most
of the top half of the diagram describes the fields $\mathbb{R}$ and
$\mathbb{C}$ and their relationship to oriented matroids, phased
matroids, and matroids~-- relationships which have been extensively
studied and are known to be fraught. The bottom half of the diagram is
in some sense a tropical version of the top half. In particular, the
hyperfield $\mathcal{T }\triangle $ in the bottom row is isomorphic to
the ``tropical hyperfield'', which is called $\mathbb{Y}$
in~\cite{Viro} and captures the $(\max , +)$ arithmetic of the
tropical semifield \cite{BS}. The hyperfield $\mathcal{T }
\mathbb{C}$ in the bottom row has not been widely studied, but, as Viro
\cite{Viro} notes, algebraic geometry over this hyperfield
``occupies an intermediate position between the complex algebraic
geometry and tropical geometry''. Each of the hyperfields in the bottom
row arises from the corresponding hyperfield in the first row by
``dequantization'', a process introduced and discussed in detail
in~\cite{Viro} and reviewed briefly in
Section~\ref{sec:dequantization}. The maps going upwards in the bottom
half of the diagram are quotient maps in exactly the same way as the
corresponding maps going downwards in the top half.

With Diagrams~\eqref{3-by-3} and~\eqref{3-by-3top} we lay out a
framework for relating several spaces (Grassmannians and realization
spaces) and continuous maps between them, and with
Theorems~\ref{thm:grassmann} and~\ref{thm:realization} we show some of
these spaces to be contractible and some of these maps to be homotopy
equivalences. We find some striking differences between the top half and
bottom half of the diagram. For instance, it is well known that
realization spaces of oriented matroids over $\mathbb{R}$ can have very
complicated topology, and can even be empty. In our framework, such
spaces arise from the hyperfield morphism $\mathbb{R}\to \mathbb{S}$.
In contrast, realization spaces of oriented matroids over the
topological ``tropical real'' hyperfield $\mathcal{T }\mathbb{R}$, which
arise from the hyperfield morphism $\mathcal{T }\mathbb{R}\to
\mathbb{S}$, are all contractible.

Our enthusiasm for topological hyperfields arose from the prospect of
recasting our previous work \cite{Anderson-Davis} on combinatorial
Grassmannians in terms of Grassmannians over hyperfields. Motivated by
a program of MacPherson \cite{MacPherson}, we defined a notion of
a matroid bundle based on oriented matroids, described the process of
proceeding from a vector bundle to a matroid bundle, defined a map of
classifying spaces ${\widetilde{\mu }}: \operatorname{Gr}(r,
\mathbb{R}^{n}) \to \|\mathrm{MacP}(r,n)\|$, and showed that, stably,
${\widetilde{\mu }}$ induces a split surjection in mod 2 cohomology.
Here $\mathrm{MacP}(r,n)$ is the finite poset of rank $r$ oriented
matroids on $n$ elements and $\|\mathrm{MacP}(r,n)\|$ is its geometric
realization. One of the topological hyperfields we consider in the
current paper is the sign hyperfield $\mathbb{S}$. The poset
$\mathrm{MacP}(r,n)$ coincides as a set with the Grassmannian
$\operatorname{Gr}(r, \mathbb{S}^{n})$. We will relate the partial order
on $\mathrm{MacP}(r,n)$ to the topology on $\operatorname{Gr}(r,
\mathbb{S}^{n})$ to see that $\|\mathrm{MacP}(r,n)\|$ and $
\operatorname{Gr}(r, \mathbb{S}^{n})$ have the same weak homotopy type,
and hence the same cohomology. Further, the Grassmannian $
\operatorname{Gr}(r, \mathcal{T }\mathbb{R}_{0}^{n})$ over the tropical
real hyperfield is homotopy equivalent to $\operatorname{Gr}(r,
\mathbb{S}^{n})$. Summarizing,
%
\begin{theorem}
\label{cohomology}
\begin{enumerate}[\textit{2.}]%
\item[\textit{1.}]
There is a weak homotopy equivalence $\|\mathrm{MacP}(r,n)\| \to
\operatorname{Gr}(r,\mathbb{S}^{n})$.
\item[\textit{2.}]
There are maps of topological hyperfields
\begin{equation*}
\mathbb{R}\to \mathbb{S}\to \mathcal{T }\mathbb{R}_{0}
\end{equation*}
inducing continuous maps $ \operatorname{Gr}(r, \mathbb{R}^{n})
\to \operatorname{Gr}(r, \mathbb{S}^{n}) \to \operatorname{Gr}(r,
\mathcal{T }\mathbb{R}_{0}^{n})$. The first Grassmannian map gives a
surjection in mod 2 cohomology and the second is a homotopy equivalence.
\end{enumerate}
\end{theorem}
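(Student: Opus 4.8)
The plan is to treat the three assertions in turn. For~(1), the first task is to identify the topology on $\operatorname{Gr}(r,\mathbb{S}^n)$. By the Baker--Bowler correspondence a point of $\operatorname{Gr}(r,\mathbb{S}^n)$ is the rescaling class of a Grassmann--Pl\"ucker function $\chi\colon\binom{[n]}{r}\to\mathbb{S}$, i.e.\ exactly a chirotope of a rank-$r$ oriented matroid on $[n]$; hence $\operatorname{Gr}(r,\mathbb{S}^n)=\mathrm{MacP}(r,n)$ as a set, and this set is finite. Its topology is the quotient, by the action of $\mathbb{S}^\times=\{+,-\}$, of a subspace of $\mathbb{S}^{\binom{[n]}{r}}$, where $\mathbb{S}=\{0,+,-\}$ carries the topology whose nonempty proper open sets are $\{+\},\{-\},\{+,-\}$ (the topology making $\operatorname{sign}\colon\mathbb{R}\to\mathbb{S}$ continuous). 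A short computation then shows that the smallest open set containing a point $\chi$ is the set of all Grassmann--Pl\"ucker functions that agree with $\chi$ on its support $\underline{\chi}$; equivalently, the specialization preorder on $\operatorname{Gr}(r,\mathbb{S}^n)$ is the weak-map order on $\mathrm{MacP}(r,n)$. Since every finite topological space carries the Alexandrov topology of its specialization preorder, and the space is $T_0$ (distinct chirotopes differ on some basis), $\operatorname{Gr}(r,\mathbb{S}^n)$ is precisely the finite poset $\mathrm{MacP}(r,n)$ with its Alexandrov topology; McCord's theorem relating finite topological spaces to their order complexes then yields a weak homotopy equivalence $\|\mathrm{MacP}(r,n)\|\to\operatorname{Gr}(r,\mathbb{S}^n)$.

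For the first map in~(2), the morphism $\mathbb{R}\to\mathbb{S}$ is $\operatorname{sign}$ (extended by $0\mapsto 0$), and the induced map $\operatorname{Gr}(r,\mathbb{R}^n)\to\operatorname{Gr}(r,\mathbb{S}^n)$ sends a real $r$-subspace to the oriented matroid it realizes. I would check that, composed with the weak equivalence of~(1), this map is homotopic to the MacPherson map $\widetilde\mu\colon\operatorname{Gr}(r,\mathbb{R}^n)\to\|\mathrm{MacP}(r,n)\|$ of \cite{Anderson-Davis}: both assign to a subspace its oriented matroid, and $\widetilde\mu$ was constructed precisely as the continuous refinement of this assignment that is compatible with McCord's carrier description of points of $\|\mathrm{MacP}(r,n)\|$. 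Granting this identification, surjectivity of $\operatorname{Gr}(r,\mathbb{R}^n)\to\operatorname{Gr}(r,\mathbb{S}^n)$ on mod~$2$ cohomology follows from the main theorem of \cite{Anderson-Davis} (that $\widetilde\mu$ is, stably, a split surjection on mod~$2$ cohomology), since the map of~(1) is an isomorphism on cohomology.

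For the second map in~(2), the point is that $\mathbb{S}\to\mathcal{T}\mathbb{R}_0$ is a homotopy equivalence of topological hyperfields. It has a retraction morphism $\mathcal{T}\mathbb{R}_0\to\mathbb{S}$ which forgets the tropical magnitude, and the resulting endomorphism of $\mathcal{T}\mathbb{R}_0$ sits in a one-parameter family of hyperfield endomorphisms, obtained by rescaling the tropical magnitude by $t\in[0,1]$, connecting it to the identity at $t=1$. Because a continuous $[0,1]$-family of hyperfield morphisms $F\to F'$ induces a homotopy of the induced maps $\operatorname{Gr}(r,F^n)\to\operatorname{Gr}(r,(F')^n)$, these families descend to a deformation retraction of $\operatorname{Gr}(r,\mathcal{T}\mathbb{R}_0^n)$ onto the image of the map induced by $\mathbb{S}\to\mathcal{T}\mathbb{R}_0$, exhibiting that map as a homotopy equivalence; this is the relevant special case of the general Grassmannian homotopy-equivalence results (Theorem~\ref{thm:grassmann}).

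I expect the main obstacle to lie in~(1) --- carefully proving that the quotient-of-a-subspace topology on $\operatorname{Gr}(r,\mathbb{S}^n)$ has specialization order exactly the weak-map order, which requires keeping track of the $\mathbb{S}^\times$-quotient and of which subsets of $\binom{[n]}{r}$ occur as supports of Grassmann--Pl\"ucker functions --- together with the homotopy-theoretic comparison with $\widetilde\mu$ needed to import the cohomological conclusion of \cite{Anderson-Davis}. By contrast, McCord's theorem and the contractibility of the tropical-magnitude direction are standard inputs.
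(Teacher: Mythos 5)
Your overall approach matches the paper's: part~(1) is proved by showing the hyperfield topology on $\operatorname{Gr}(r,\mathbb{S}^n)$ is the Alexandrov (upper order ideal) topology of the weak-map poset $\mathrm{MacP}(r,n)$ and then invoking McCord, and the second map in~(2) is shown to be a homotopy equivalence via the hyperfield homotopy of Proposition~\ref{prop:homotopies} together with the Grassmannian functoriality of Theorem~\ref{thm:grassmann} (equivalently Corollaries~\ref{prop:4equivs} and~\ref{cor:grassmannhomotopy}). Your specialization-preorder computation is correct, including the use of Proposition~\ref{open_map} to pass minimal open sets through the $\mathbb{S}^\times$-quotient, so the poset identification goes through.

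There is a real gap in your argument for the cohomology surjectivity in~(2), first map. The theorem asserts that $\operatorname{Gr}(r,\mathbb{R}^n)\to\operatorname{Gr}(r,\mathbb{S}^n)$ is a surjection on mod~$2$ cohomology for each finite $n$, but the Anderson--Davis result (Theorem~\ref{AD}) only provides a split surjection on mod~$2$ cohomology for the stable map $\operatorname{Gr}(r,\mathbb{R}^\infty)\to\|\mathrm{MacP}(r,\infty)\|$. You cite the stable result but do not explain how to descend from $n=\infty$ to finite $n$. The paper supplies the missing step explicitly: the square
\begin{equation*}
\begin{CD}
\operatorname{Gr}(r,\mathbb{R}^{n}) @>>> \operatorname{Gr}(r,\mathbb{S}^{n})\\
@VVV @VVV\\
\operatorname{Gr}(r,\mathbb{R}^{\infty}) @>>> \operatorname{Gr}(r,\mathbb{S}^{\infty})
\end{CD}
\end{equation*}
commutes, so the composite $H^{*}(\operatorname{Gr}(r,\mathbb{S}^{\infty});\mathbb{F}_{2})\to H^{*}(\operatorname{Gr}(r,\mathbb{R}^{\infty});\mathbb{F}_{2})\to H^{*}(\operatorname{Gr}(r,\mathbb{R}^{n});\mathbb{F}_{2})$ is onto (the second arrow is onto because the mod~$2$ Schubert cell chain complex of $\operatorname{Gr}(r,\mathbb{R}^{\infty})$ has zero differentials and $\operatorname{Gr}(r,\mathbb{R}^{n})$ is a subcomplex), and this composite factors through $H^{*}(\operatorname{Gr}(r,\mathbb{S}^{n});\mathbb{F}_{2})$. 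Without this stabilization step your argument only proves the $n=\infty$ case. You should also be slightly more careful about the direction of the comparison with $\widetilde\mu$: the weak equivalence of part~(1) goes $\|\mathrm{MacP}(r,n)\|\to\operatorname{Gr}(r,\mathbb{S}^n)$, so one does not directly compose $\mu$ with it; rather one uses Remark~\ref{mccordremark} to produce a lift $\widetilde\mu$ of $\mu$, unique up to homotopy, and checks that the Anderson--Davis map is such a lift, which is Theorem~\ref{AD}, part~2.
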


The Grassmannians $\operatorname{Gr}(r,\mathbb{R}^{n})$ and
$\operatorname{Gr}(r, \mathbb{C}^{n})$ are already well understood and,
indeed, of central importance in topology. The Grassmannian
$\operatorname{Gr}(r,\mathbb{S}^{n})= \mathrm{MacP}(r,n) \simeq
\operatorname{Gr}(r,\mathcal{T }\mathbb{R}_{0}^{n})$ is a space which
has been studied but remains somewhat mysterious: its topology is
discussed in Section~\ref{sec:macphersonian}. The Grassmannian
$\operatorname{Gr}(r,\mathbb{K}^{n})$ is contractible. Beyond what is
presented in this paper, the remaining spaces appear to be considerably
more difficult to understand. We hope that the discussion here
stimulates interest in pursuing the topology of these spaces. To repeat
a remark we made previously on the MacPhersonian
\cite{Anderson-Davis}: ``there are open questions everywhere you spit''.

\section{Hyperfields}

Much of the following is background material taken from
\cite{Viro} and \cite{Baker-Bowler}.

\subsection{Examples}

This section owes much to the paper of Oleg Viro \cite{Viro}.

A \emph{hyperoperation} on a set $S$ is a function from $S \times S$ to
the set of nonempty subsets of $S$. A \emph{abelian hypergroup}
$(S, \boxplus , 0)$ is a set $S$, a hyperoperation $\boxplus $ on
$S$, and an element $0 \in S$ satisfying
\begin{itemize}%
\item
For all $x,y \in S$, $x \boxplus y = y \boxplus x$.
\item
For all $x,y,z \in S$, $(x \boxplus y) \boxplus z = x \boxplus (y
\boxplus z)$.
\item
For all $x \in S$, $x \boxplus 0 = x$.
\item
For all $x \in S$, there is a unique $-x\in S$ such that $0\in x
\boxplus -x$.
\item
For all $x,y,z \in S$, $x \in y \boxplus z \Leftrightarrow -x \in -y
\boxplus -z$.
\end{itemize}

Here we define hyperoperations applied to sets in the obvious way. For
instance, $(x \boxplus y) \boxplus z$ is the union of $u \boxplus z$
over all $u \in x \boxplus y$.

The last axiom for an abelian hypergroup $S$ can be replaced by:
\begin{itemize}%
\item
\emph{Reversibility:} For all $x,y,z \in S$, $x \in y \boxplus z
\Leftrightarrow z \in x \boxplus -y$.
\end{itemize}

In the literature, an abelian hypergroup as above is sometimes called
a canonical hypergroup.

A \emph{hyperfield} is a tuple $(F, \odot , \boxplus , 1, 0)$ consisting
of a set, an operation, a hyperoperation, and two special elements
$1 \neq 0$ such that
\begin{itemize}%
\item
$(F, \boxplus , 0)$ is an abelian hypergroup.
\item
$(F-\{0\}, \odot , 1)$ is a abelian group, denoted by $F^{\times }$.
\item
For all $x \in F$, $0 \odot x = 0 = x \odot 0$.
\item
$x \odot (y \boxplus z) = (x \odot y) \boxplus (x \odot z)$.
\end{itemize}

We will often abbreviate and say that $F$ is a hyperfield.

The following property may or may not hold for a hyperfield $F$:
\begin{itemize}%
\item
\emph{Doubly distributive property:} For all $w,x,y,z \in F$,
$(w \boxplus x) \odot (y \boxplus z) = (w \odot y) \boxplus (w \odot
z) \boxplus (x \odot y) \boxplus (x \odot z)$
\end{itemize}

Suppose $K$ is a field. Here are two constructions of associated
hyperfields:
\begin{itemize}%
\item
Suppose $S$ be a subgroup of the multiplicative group of units
$K^{\times }$. Then $K/_{m} S := \{0\} \cup K^{\times }/S$ is a
hyperfield with $[a] \odot [b] = [ab]$ and $[a] \boxplus [b] = \{[c] :
c \in [a] + [b]\}$. Here $[0] = 0 \in K/_{m} S$ and for $a \in K^{
\times }$, $[a] = aS$. Note that $[a] \boxplus [b]$ is independent of
the choice of representatives for $[a]$ and $[b]$.

More generally, let $(F,\odot , \boxplus ,1,0)$ be a hyperfield and
$S$ be a subgroup of the multiplicative group of units $F^{\times }= F
- \{0\}$. Then $F/_{m} S = \{0\} \cup F^{\times }/S$ is a hyperfield
with $[a] \odot [b] = [a\odot b]$ and $[a] \boxplus [b] = \{[c] : c
\in [a] \boxplus [b]\}$.
\item
Suppose $K = K_{-} \cup \{0\} \cup K_{+}$ is an ordered field. Define
a hyperfield $F$ with $F^{\times }= K^{\times }$, but with
\begin{equation*}
a \boxplus b =
\begin{cases}
a
& \text{if } |a| > |b|
\\
a
&\text{if }a = b
\\
[-|a|,|a|]
&\text{if }a = -b
\end{cases}
\end{equation*}
\end{itemize}

A \emph{homomorphism of hyperfields} is a function $h : F \to F'$ such
that $h(0) = 0$, $h(1) = 1$, $h(x \odot y) = h(x) \odot h(y)$ and
$h(x \boxplus y) \subseteq h(x) \boxplus h(y)$.

Many examples of hyperfields and homomorphisms are encoded in the
diagram below.
%
\begin{equation}
\label{3-by-3}
\large
\begin{tikzcd}
[row sep=small] \mathbb R\arrow[r, hook] \arrow[dd,
"\operatorname{ph}"'] &\mathbb C\arrow[r, "|~|"] \arrow[d,
"\operatorname{ph}"'] & \triangle \arrow[dd, "\operatorname{ph}"'] \\
&\mathbb P\arrow[dd]\arrow[rd,"|~|"]&\\ \mathbb S\arrow[ru,
hook]\arrow[rd,hook] \arrow[dd, dashed] & & \mathbb K\arrow[dd, dashed]
\arrow[uu, shift right=1ex, dashed] \\ & \Phi \arrow[ru, "|~|"]
\arrow[d, dashed]&\\ {\mathcal T }\mathbb R\arrow[r, hook] \arrow[uu, shift
right= 1ex, "\operatorname{ph}"'] & {\mathcal T }\mathbb C\arrow[r, "|~|"]
\arrow[u, shift right=1ex, "\operatorname{ph}"']&{\mathcal T}
\triangle \arrow[uu, shift right=1ex, "\operatorname{ph}"']
\end{tikzcd}
\end{equation}

The diagram with the solid arrows commutes. The four dashed arrows are
inclusions giving sections (one-sided inverses). Here $
\operatorname{ph}$ is the \emph{phase map} $\operatorname{ph}(x) = x/|x|$
if $x \neq 0$ and $\operatorname{ph}(0) = 0$.

In each of the ten hyperfields, the underlying set is a subset of the
complex numbers closed under multiplication. And, in
each hyperfield, multiplication, the additive identity, and the
multiplicative identity coincides with that of the complex numbers.

Here are the hyperfields in the diagram:
\begin{enumerate}%
\item
$\mathbb{R}$ is the field of real numbers. $\mathbb{C}$ is the field of
complex numbers.
\item
$\triangle = (\mathbb{R}_{\geq 0}, \times , \boxplus , 1,0) $ is the
\emph{triangle hyperfield} of Viro \cite{Viro}. Here $a \boxplus
b =\{ c : |a-b| \leq c \leq a+b\}$ which can be interpreted as the set
of all numbers $c$ such that there is a triangle with sides of length
$a, b, c$. Note that the additive inverse of $a$ is $a$.
\item
$\mathbb{P}= (S^{1} \cup \{0\}, \times , \boxplus , 1, 0)$ is the
\emph{phase hyperfield}. If $a \in S^{1}$, then $a \boxplus -a = \{-a,0,a
\}$ and $a \boxplus a = a$. If $a, b \in S^{1}$ and $a \neq \pm b$,
then $a \boxplus b$ is the shortest open arc connecting $a$ and $b$.
Note that the additive inverse of $a$ is $-a$.
\item
$\mathbb{S}=(\{-1,0,1\},\times , \boxplus , 1, 0)$ is the \emph{sign
hyperfield}. Here $1 \boxplus 1 = 1$, $-1 \boxplus -1 = -1$, and
$1 \boxplus -1 = \{-1,0,1\}$. Note that the additive inverse of $a$ is
$-a$.
\item
$\Phi = (S^{1} \cup \{0\}, \times , \boxplus , 1, 0)$ is the
\emph{tropical phase hyperfield}. If $a \in S^{1}$, then $a \boxplus -a
= S^{1} \cup \{0\}$ and $a \boxplus a = a$. If $a, b \in S^{1}$ and
$a \neq \pm b$, then $a \boxplus b$ is the shortest closed arc
connecting $a$ and $b$. Note that the additive inverse of $a$ is $-a$.
\item
$\mathbb{K}= (\{0,1\},\times , \boxplus , 1, 0)$ is the \emph{Krasner
hyperfield}. Here $1 \boxplus 1 = \{0,1\}$. Note that the additive
inverse of $a$ is $a$.
\item
${\mathcal T} \mathbb{R}= (\mathbb{R},\times , \boxplus , 1, 0)$ is the
\emph{tropical real hyperfield}. Here if $|a| > |b|$, then $a \boxplus
b = a$. Also $a \boxplus a = a$ and $a \boxplus -a = [-|a|,|a|]$. Note
that the additive inverse of $a$ is $-a$. This hyperfield was studied
by Connes and Consani \cite{Connes-Consani}, motivated by
considerations in algebraic arithmetic geometry.
\item
${\mathcal T} \mathbb{C}= (\mathbb{C},\times , \boxplus , 1, 0)$ is the
\emph{tropical complex hyperfield}. One defines $a \boxplus -a = \{x
\in \mathbb{C}: |x| \leq |a| \}$, the disk of radius $|a|$ about the
origin. If $|a| > |b|$, then $a \boxplus b = a$. If $|a| = |b|$ and
$a \neq -b$, then $a \boxplus b$ is the shortest closed arc connecting
$a$ and $b$ on the circle of radius $|a|$ with center the origin. Note
that the additive inverse of $a$ is $-a$.
\item
${\mathcal T}\triangle = (\mathbb{R}_{\geq 0},\times , \boxplus , 1, 0)$ is
the \emph{tropical triangle hyperfield}. Here if $a > b$, then
$a \boxplus b = a$, and $a \boxplus a = [0, a]$. Note that the additive
inverse of $a$ is $a$.

The logarithm map from $\mathcal{T }\triangle $ to $\mathbb{R}\cup \{-
\infty \}$ induces a hyperfield structure on $\mathbb{R}\cup \{-
\infty \}$. Following \cite{Viro} we denote this hyperfield
$\mathbb{Y}$ and call it the \emph{tropical hyperfield}. In tropical
geometry it is standard to work with the \emph{tropical semifield}, whose
only difference from $\mathbb{Y}$ is that $a\boxplus b$ is defined to
be $\max (a,b)$ for all $a$ and $b$. (Thus in both the tropical
hyperfield and the tropical semifield, $-\infty $ is the additive
identity, but in the tropical semifield no real number has an additive
inverse.) Hyperfield language offers considerable advantages over
semifield language. For instance, consider a polynomial $p(x)=\sum
_{i=1}^{n} a_{i} x^{i}$ with coefficients in $\mathbb{R}$. In the
language of the tropical semifield, a root of $p$ is a value $c$ such
that the maximum value of $a_{i} c^{i}$ (under semifield multiplication)
is achieved at two or more values of $i$. The equivalent but more
natural definition in terms of $\mathbb{Y}$ is that $c$ is a root of
$p$ if $-\infty $ (the additive identity) is in $\bighplus _{i=1}^{n}
a_{i} c^{i}$ (under hyperfield multiplication and addition).
\end{enumerate}

\begin{prop}
\begin{enumerate}[\textit{2.}]%
\item[\textit{1.}]
For each hyperfield $F$ in the second column of Diagram~\eqref{3-by-3},
$S^{1}$ is a multiplicative subgroup of $F^{\times }$, and the image of
the map $|~|$ with domain $F$ is $F/_{m} S^{1}$.
\item[\textit{2.}]
For each hyperfield $F$ in the first or last row of
Diagram~\eqref{3-by-3}, $\mathbb{R}_{>0}$ is a multiplicative subgroup
of $F^{\times }$, and the image of the map $\operatorname{ph}$ with
domain $F$ is $F/_{m} \mathbb{R}_{>0}$.
\end{enumerate}
\end{prop}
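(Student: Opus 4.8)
The plan is to prove both statements at once. In part (1) set $S=S^1$; in part (2) set $S=\mathbb R_{>0}$. For each hyperfield $F$ under consideration let $g\colon F\to F'$ be the associated morphism — $g=|\cdot|$ in part (1) and $g=\operatorname{ph}$ in part (2) — where $F'$ is the hyperfield lying immediately to the right of $F$ (part (1)) or immediately below $F$ (part (2)) in Diagram~\eqref{3-by-3}. The content to be proved is then: $S$ is a multiplicative subgroup of $F^\times$; $g$ is surjective onto $F'$; and the bijection of underlying sets $F/_mS\to F'$ induced by $g$ is an isomorphism of hyperfields, so that $\operatorname{im}(g)=F'\cong F/_mS$. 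Here I use that, identifying $[z]\in F/_mS$ with $g(z)$, the quotient map $F\to F/_mS$ is literally $g$, because $g$ is constant on $S$-cosets: $|a|=|b|\iff a/b\in S^1$ and $\operatorname{ph}(a)=\operatorname{ph}(b)\iff a/b\in\mathbb R_{>0}$.

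First I would dispose of the subgroup and surjectivity claims. Each of the ten hyperfields has underlying set a multiplicatively closed subset of $\mathbb C$ containing $0$ and $1$, with complex multiplication, so $F^\times$ is a subgroup of $\mathbb C^\times$; and $S$, being a subgroup of $\mathbb C^\times$, is contained in $F^\times$ in every relevant case (indeed $S=F^\times$ for $F\in\{\mathbb P,\Phi\}$ in part (1) and for $F\in\{\triangle,\mathcal T\triangle\}$ in part (2), and $S\subseteq\mathbb C^\times\subseteq F^\times$ otherwise). Surjectivity of $g$ onto the indicated $F'$ is immediate from the explicit descriptions (e.g.\ $|\cdot|$ sends $\mathbb C^\times$ onto $\mathbb R_{>0}$, and $\operatorname{ph}$ sends $\mathbb R^\times,\mathbb C^\times$ onto $\{\pm1\},S^1$, with $0\mapsto0$). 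Hence $g$ descends to a bijection $\overline g\colon F/_mS\xrightarrow{\ \sim\ }F'$ preserving $0$, $1$ and $\odot$, and it remains only to check that $\overline g$ preserves $\boxplus$ exactly.

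That last point is where the work lies. Since $g$ is a hyperfield homomorphism, $\overline g([a]\boxplus[b])\subseteq\overline g([a])\boxplus_{F'}\overline g([b])$ is automatic, so the issue is the reverse inclusion, i.e.\ that $g(aS\boxplus_F bS)=g(a)\boxplus_{F'}g(b)$, where $aS\boxplus_F bS$ is the hypersum of the two \emph{cosets} — a union over all pairs of representatives, not a single $a\boxplus_F b$. I would check this one map at a time from the formulas in the list. Two computations do essentially all of it. In $\mathbb C$, the Minkowski sum of the circles of radii $r_1\ge r_2\ge0$ is the annulus $r_1-r_2\le|w|\le r_1+r_2$, and applying $|\cdot|$ recovers $|a|\boxplus_\triangle|b|$; the map $\mathcal T\mathbb C\to\mathcal T\triangle$ is the same, except that for $|a|=|b|$ the coset-sum of the two circles is the full disk of radius $|a|$ (because $a\boxplus_{\mathcal T\mathbb C}(-a)$ is that disk), yielding $[0,|a|]$. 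Dually, $\operatorname{ph}$ carries the open cone $\mathbb R_{>0}a+\mathbb R_{>0}b$ onto the shortest open arc from $\operatorname{ph}(a)$ to $\operatorname{ph}(b)$, or onto $\{\operatorname{ph}(a),0,-\operatorname{ph}(a)\}$ when $\operatorname{ph}(b)=-\operatorname{ph}(a)$ (the cone being then a line), which is exactly $\operatorname{ph}(a)\boxplus_{\mathbb P}\operatorname{ph}(b)$; for $\mathcal T\mathbb C\to\Phi$ one unions the tropical sums over all representatives, which fills the whole plane in the antipodal case (matching $S^1\cup\{0\}$) and the closed cone otherwise (matching the shortest closed arc). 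The remaining cases are easiest: the four maps into the Krasner hyperfield $\mathbb K$ (from $\mathbb P,\Phi,\triangle,\mathcal T\triangle$) need only $0\in aS\boxplus_F bS$, which holds since that set contains some $x\boxplus_F(-x)\ni0$, together with $1\boxplus_{\mathbb K}1=\{0,1\}$; and $\mathbb R\to\mathbb S$, $\mathcal T\mathbb R\to\mathbb S$ are the one-dimensional restrictions of the $\mathbb C\to\mathbb P$ picture.

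I expect the only real obstacle to be precisely this last step: the verification splits into the finitely many elementary cases above, and the one subtlety to keep in mind is that $\boxplus$ in $F/_mS$ must be computed by summing cosets and then applying $g$ — for the targets $\Phi$ and the disk/interval tropical sums a single-representative computation would not even be well defined.
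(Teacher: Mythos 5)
The paper does not actually prove this proposition: it is stated without proof, and a few lines later the authors write ``We leave the verification of the axioms for hyperfields and hyperfield homomorphisms in Diagram (1) to the diligent reader.'' So there is no paper argument to compare against, and the question is simply whether your verification is sound.

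It is. You correctly identify the one genuine subtlety: the hyperaddition in $F/_m S$ is computed by taking the hypersum $aS\boxplus_F bS$ of the full \emph{cosets} (a union over all pairs of representatives) and then projecting, and this must be matched exactly against $\boxplus_{F'}$, not merely shown to land inside it (the forward inclusion being automatic since $g$ is a homomorphism). Your case checks are correct: the annulus/disk computations handle $\mathbb C\to\triangle$ and $\mathcal T\mathbb C\to\mathcal T\triangle$, the cone/arc computations handle $\mathbb C\to\mathbb P$ and $\mathcal T\mathbb C\to\Phi$, and the degenerate cases to $\mathbb K$ and $\mathbb S$ follow. A few small imprecisions are worth flagging but do not affect correctness. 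First, ``$S\subseteq\mathbb C^\times\subseteq F^\times$ otherwise'' has the containments backwards; what you want (and what is true) is $S\subseteq F^\times\subseteq\mathbb C^\times$. Second, in the $\mathcal T\mathbb C\to\Phi$ case the coset hypersum for $\operatorname{ph}(a)\neq\pm\operatorname{ph}(b)$ is the closed cone \emph{minus the origin}; including $0$ would wrongly put $0$ in $\operatorname{ph}(a)\boxplus_\Phi\operatorname{ph}(b)$, so the exclusion matters and deserves a sentence. Third, calling $\mathcal T\mathbb R\to\mathbb S$ a ``restriction of the $\mathbb C\to\mathbb P$ picture'' is misleading since $\mathcal T\mathbb R$ carries tropical addition, not the restriction of complex addition; it is the one-dimensional restriction of $\mathcal T\mathbb C\to\Phi$. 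With these clarifications the argument is complete.
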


All the maps in Diagram \eqref{3-by-3} are homomorphisms of hyperfields.
Note that neither identity map $\mathbb{R}\to \mathcal{T }\mathbb{R}$
nor $\mathcal{T }\mathbb{R}\to \mathbb{R}$ is a hyperfield homomorphism.
There are, in fact, no hyperfield homomorphisms from $\mathcal{T }
\mathbb{R}$ to $\mathbb{R}$, and the only hyperfield morphism from
$\mathbb{R}$ to $\mathcal{T }\mathbb{R}$ is the composition of the maps
shown in Diagram \eqref{3-by-3}. Similar remarks apply to $\mathbb{C}$
and $\mathcal{T }\mathbb{C}$.

We leave the verification of the axioms for hyperfields and hyperfield
homomorphisms in Diagram \eqref{3-by-3} to the diligent reader.

Several of the hyperfields above play special roles. The Krasner
hyperfield is a final object: for any hyperfield $F$ there is a unique
hyperfield homomorphism $F \to \mathbb{K}$, where $0$ maps to $0$ and
every nonzero element maps to $1$. As will be discussed in
Section~\ref{sec:orderings_and_norms}, the hyperfields $\mathbb{S}$,
$\triangle $, and $\mathcal{T }\triangle $ are representing objects for
the sets of orderings, norms, and nonarchimedean norms on hyperfields.

Note the vertical symmetry of Diagram~\eqref{3-by-3}.
Section~\ref{sec:dequantization} will review the idea (due to Viro) that
the last row of Diagram~\eqref{3-by-3} is obtained from the first row
via \emph{dequantization}, an operation on $\mathbb{R}$, $\mathbb{C}$,
and $\triangle $ that preserves the underlying set and multiplicative
group. Each map in the lower half of the diagram is identical, as a set
map, to its mirror image in the upper half of the diagram. Thus we think
of the lower half of the diagram as the `tropical' version of the upper
half.

The row $\mathbb{S}\hookrightarrow \mathbb{P}\to \mathbb{K}$ is the row
of most traditional interest to combinatorialists, particularly
$\mathbb{S}$, which as we shall see leads to oriented matroids, and
$\mathbb{K}$, which leads to matroids. ($\mathbb{P}$ leads to phased
matroids, introduced in~\cite{Anderson-Delucchi}.)

\subsection{Dequantization}%
\label{sec:dequantization}

We now review Viro's dequantization, which is a remarkable way of
passing from an entry $F$ in the first row of Diagram~\eqref{3-by-3} to
the corresponding entry $\mathcal{T }F$ in the last row via the identity
map, perturbing the addition in $F$ to the hyperfield addition in the
tropical hyperfield $\mathcal{T }F$.

For $h \in \mathbb{R}_{>0}$, define the homeomorphism $S_{h} : F
\to F$ by
\begin{equation*}
S_{h}(x) =
\begin{cases}
|x|^{1/h} \frac{x}{|x|}
& x \neq 0
\\
0
& x = 0
\end{cases}
\end{equation*}
Note that $S_{h}(xy) = S_{h}(x)S_{h}(y)$. Define $x +_{h} y = S_{h}
^{-1}(S_{h}(x) + S_{h}(y))$. (In the case $F = \mathbb R$, perhaps a better way to think about
$+_{h}$ is to take $h = a/b$ where $a$ and $b$ are odd positive integers
and define $x +_{h} y = (x^{1/h} + y^{1/h})^{h}$. Then for a general
$h >0$ define $+_{h}$ as a limit.)

For $ h > 0$, let $F_{h}$ be the topological field $(F,+_{h}, \times
, 1,0)$. Then $S_{h} : F_{h} \to F$ is an isomorphism of topological
fields.

It is tempting to define $x+_{0}y$ to be $\lim _{h\to 0}x+_{h}y$. But
this operation is not associative: $(1+_{0}-1)+_{0}-1=0+_{0}-1=-1
\neq 0=1+_{0}(-1+_{0} -1) $. A better approach is suggested by
considering the graph of $z=x+_{h}y$ for small values of $h$. The graph
when $F=\mathbb{R}$ and $h=1/5$ (a variant of which also appears
in~\cite{Connes-Consani}) is shown in
Fig.~\ref{fig:dequantization}.
As $h$ approaches 0, this graph resembles $\{(x,y,z): x,y\in F, z
\in \mathcal{T }F, z \in x \boxplus y\}$. Section~9.2 and Theorem~9.A
of \cite{Viro} make this precise as follows: for $F\in \{
\mathbb{R}, \mathbb{C}\}$ define $\Gamma =\{(x,y,z,h)\in F^{3}\times
\mathbb{R}_{\geq 0}:z=x +_{h} y\}$. Then define $x\boxplus y=\{z: (x,y,z,0)
\in \overline{\Gamma }\}$. This hyperaddition is, in fact, the
hyperaddition in $\mathcal{T }\mathbb{R}$ and $\mathcal{T }\mathbb{C}$.
Viro refers to this approximation of $\mathcal{T }F$ by the classical
fields $F_{h}$ as dequantization, contrasted with the usual quantization
setup of quantum mechanics where one deforms a commutative ring to a
noncommutative ring.

%
\begin{figure}
\includegraphics{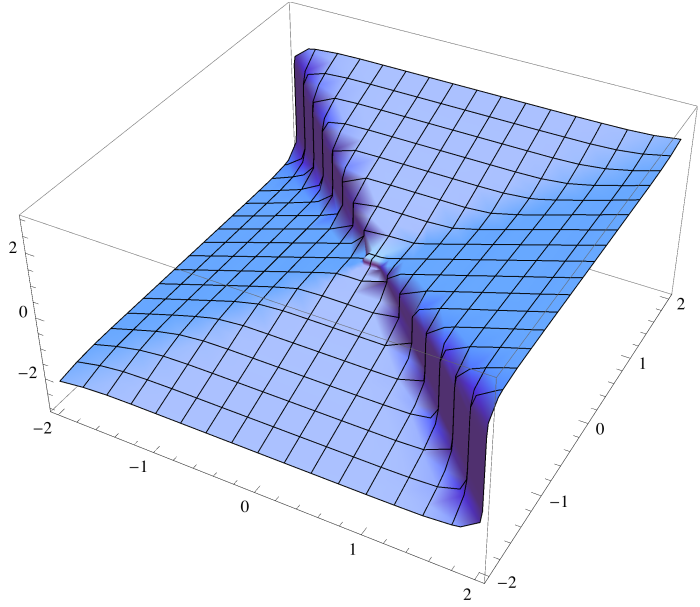}
\caption{The graph of $z = x+_{1/5} y = (x^5 + y^5)^{1/5}$.}
\label{fig:dequantization}
\end{figure}
%

\subsection{Structures on hyperfields}

\subsubsection{Orderings and norms}%
\label{sec:orderings_and_norms}

This section also owes much to the paper of Oleg Viro
\cite{Viro}.

\begin{definition}
An \emph{ordering} on a hyperfield $(F, \odot , \boxplus , 1, 0)$ is a
subset $F_{+} \subset F$ satisfying $F_{+} \boxplus F_{+} \subseteq F
_{+}$, $F_{+} \odot F_{+} \subseteq F_{+}$, and $F = F_{+} \coprod \{
0 \} \coprod -F_{+}$.
\end{definition}

An ordering on the sign hyperfield $\mathbb{S}$ is given by
$\mathbb{S}_{+} = \{1\}$. An ordering on a hyperfield $F$ determines and
is determined by a hyperfield homomorphism $h : F \to \mathbb{S}$ with
$F_{+} = h^{-1}\{1\}$.

Orderings on hyperfields pull back, so if $h : F \to F'$ is a hyperfield
homomorphism and $F'_{+}$ is an ordering on $F'$, then $h^{*}F'_{+} :=
h^{-1}F'_{+}$ is an ordering on $F$. For any hyperfield~$F$, the set of
orderings  is given by $\{ h^{*}\mathbb{S}
_{+} : h : F \to \mathbb{S}\}$, where $h$ runs over all hyperfield
homomorphisms from $F$ to $\mathbb{S}$.

Categorically speaking, the functor $O : \operatorname{Hyperfield}
^{op} \to \operatorname{Set}$ given by the set of orderings on a
hyperfield is a representable functor with representing object
$\mathbb{S}$. In other words, the pullback gives a natural bijection
from $\operatorname{Hyperfield}(-,\mathbb{S})$ to $O(-)$.

\begin{definition}
A \emph{norm} on a hyperfield $(F, \odot , \boxplus , 1, 0)$ is a
function $\|~\| : F \to \mathbb{R}_{\geq 0}$ satisfying
\begin{enumerate}%
\item
$x \neq 0$ if and only if $\|x\|>0$
\item
$\|x \odot y\| = \|x\|\|y\|$
\item
$\|x \boxplus y\| \subseteq [0, \|x\| + \|y\|]$
\end{enumerate}
\end{definition}

The identity function is a norm on the triangle hyperfield $\triangle
$.

Norms on hyperfields pull back, so if $h : F \to F'$ is a hyperfield
homomorphism and $\|~\|$ is a norm on $F'$, then $h^{*}\|~\| = \|~\|
\circ h$ is a norm on $F$. We claim that $\triangle $ is a representing
object for norms on hyperfields. The key lemma follows.\vspace*{-2pt}

\begin{lemma}
Let $F$ be a hyperfield. A function $\|~\| : F \to \mathbb{R}_{\geq 0}$
is a norm if and only if $\|~\|: F \to \triangle $ is a hyperfield
homomorphism.\vspace*{-2pt}
\end{lemma}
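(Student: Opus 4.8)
The plan is to compare the three defining conditions of a norm, one at a time, with the three nontrivial requirements on a hyperfield homomorphism $\|~\| : F \to \triangle$ (recalling that $\triangle$ has underlying set $\mathbb{R}_{\geq 0}$, multiplication the usual one, and $a \boxplus b = \{c : |a-b| \le c \le a+b\}$). The homomorphism conditions to check are: $\|0\| = 0$, $\|1\| = 1$, $\|x \odot y\| = \|x\| \cdot \|y\|$, and $\|x \boxplus y\| \subseteq \|x\| \boxplus \|y\|$ in $\triangle$.

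First I would dispense with the easy correspondences. Norm axiom (1), namely $x \ne 0 \iff \|x\| > 0$, is equivalent to the conjunction $\|0\| = 0$ together with $\|x\| \ne 0$ for $x \ne 0$; in the homomorphism direction, $\|0\| = 0$ is required outright, and $\|x\| > 0$ for $x \ne 0$ follows because the nonzero elements of $F$ form a group under $\odot$, which must map into the group $\triangle^{\times} = \mathbb{R}_{>0}$ under a multiplicative map — equivalently one uses $\|x\| \cdot \|x^{-1}\| = \|1\|$. Norm axiom (2) is literally the multiplicativity condition $\|x \odot y\| = \|x\| \odot \|y\|$. The condition $\|1\| = 1$ is then automatic from multiplicativity: $\|1\| = \|1 \odot 1\| = \|1\|^2$ with $\|1\| \ne 0$ forces $\|1\| = 1$. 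So axioms (1) and (2) of a norm together are equivalent to $\|0\| = 0$, $\|1\| = 1$, and multiplicativity.

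The heart of the argument is matching norm axiom (3), $\|x \boxplus y\| \subseteq [0, \|x\| + \|y\|]$, with the homomorphism condition $\|x \boxplus y\| \subseteq \|x\| \boxplus \|y\| = [\,|\,\|x\| - \|y\|\,|,\ \|x\| + \|y\|\,]$. The containment required for a homomorphism is strictly stronger, since it also imposes the lower bound $\|z\| \ge |\,\|x\| - \|y\|\,|$ for every $z \in x \boxplus y$. So the main step — and the one I expect to be the only real obstacle — is to show that, given norm axioms (1) and (2), norm axiom (3) already forces this lower bound. The trick is reversibility: if $z \in x \boxplus y$, then $y \in z \boxplus (-x)$, so applying axiom (3) to the pair $(z, -x)$ gives $\|y\| \le \|z\| + \|-x\|$. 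Now $\|-x\| = \|x\|$: indeed $\|-1\|^2 = \|(-1)\odot(-1)\| = \|1\| = 1$ and $\|-1\| \ge 0$ force $\|-1\| = 1$, hence $\|-x\| = \|-1\| \cdot \|x\| = \|x\|$ by multiplicativity. Thus $\|y\| - \|x\| \le \|z\|$; symmetrically $\|x\| - \|y\| \le \|z\|$ (using $z \in x \boxplus y$ directly, or $x \in z \boxplus (-y)$), so $\|z\| \ge |\,\|x\| - \|y\|\,|$, which together with the upper bound from axiom (3) gives $\|z\| \in \|x\| \boxplus \|y\|$.

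Conversely, the homomorphism containment $\|x \boxplus y\| \subseteq [\,|\,\|x\|-\|y\|\,|,\ \|x\|+\|y\|\,]$ trivially implies the weaker containment in $[0, \|x\|+\|y\|]$, so axiom (3) for a norm follows from the homomorphism property. Assembling the pieces: $\|~\|$ is a norm iff $\|0\|=0$, $\|1\|=1$, it is multiplicative, and $\|z\| \in \|x\|\boxplus\|y\|$ whenever $z \in x\boxplus y$ — which is exactly the statement that $\|~\| : F \to \triangle$ is a hyperfield homomorphism. I would present the forward direction (norm $\Rightarrow$ homomorphism) in full, since the reversibility step is the substance, and note the reverse direction as immediate.
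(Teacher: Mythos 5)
Your proof is correct and follows essentially the same route as the paper's: both reduce the problem to establishing the lower bound $\|z\| \ge |\,\|x\|-\|y\|\,|$ for $z \in x \boxplus y$, and both obtain it from reversibility together with $\|-x\| = \|x\|$. Your derivation of $\|-x\|=\|x\|$ via $\|-1\|^2 = 1$ is a minor cosmetic variant of the paper's observation that $-x \odot -x = x \odot x$.
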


\begin{proof}
It is clear that if $\|~\| : F \to \triangle $ is a hyperfield
homomorphism, then $\|~\|$ is a norm.

Now suppose that $\|~\| : F \to \mathbb{R}_{\geq 0}$ is a norm. Note
that in a hyperfield, $-x \odot -x = x \odot x$, which implies that
$\|-x\| = \|x\|$. Recall that in the triangle hyperfield, $a \boxplus
b = [|a-b|, a + b]$. To prove that $\|~\| : F \to \triangle $ is a
homomorphism, the only nontrivial part is to show for $x,y,z \in F$,
that if $z \in x \boxplus y $, then $\vert \|x \| - \|y\| \vert \leq \|z\|$.
Without loss of generality, $ \|x \| \geq \|y\| $.\vspace*{-2pt}
\begin{align*}
z \in x \boxplus y
& \Longrightarrow x \in -y \boxplus z
\\[-2pt]
& \Longrightarrow \|x\| \leq \|-y\| + \|z\| = \|y\| + \|z\|
\\[-2pt]
& \Longrightarrow \|x\| - \|y\| \leq \|z\|\qedhere
\end{align*}
\end{proof}

Thus the set of norms on an arbitrary hyperfield $F$ is given by
$\{ {h^{*}\|~\|_{\triangle }}\}$, where $h$ runs over all hyperfield
homomorphisms from $F$ to $\triangle $ and $\|~\|_{\triangle } :
\triangle \to \mathbb{R}_{\geq 0}$ is the identity function.

Categorically speaking, the functor $N : \operatorname{Hyperfield}
^{op} \to \operatorname{Set}$ given by the set of norms on a hyperfield
is a representable functor with representing object $\triangle $. In
other words, the pullback gives a natural bijection from $
\operatorname{Hyperfield}(-,\triangle )$ to $N(-)$.

There is a hyperfield homomorphism $\mathbb{K}\to \triangle $. Thus any
hyperfield has a norm by sending $0$ to $0$ and any nonzero element to
$1$.\vspace*{-2pt}

\begin{definition}
A \emph{nonarchimedean norm} on a hyperfield $(F, \odot , \boxplus , 1,
0)$ is a function $\|~\| : F \to \mathbb{R}_{\geq 0}$ satisfying\vspace*{-2pt}
\begin{enumerate}%
\item
$x \neq 0$ if and only if $\|x\|>0$
\item
$\|x \odot y\| = \|x\|\|y\|$
\item
$\|x \boxplus y\| \subseteq [0, \max \{\|x\|, \|y\|\}]$\vspace*{-2pt}
\end{enumerate}
\end{definition}

The identity function is a nonarchimedean norm on the tropical triangle
hyperfield $\mathcal{T }\triangle $.

Nonarchimedean norms on hyperfields were a key motivation for the
introduction of hyperfields by Krasner \cite{Krasner}; he used the
related notion of a valued hyperfield.

Nonarchimedean norms on hyperfields pull back, so if $h : F \to F'$ is
a hyperfield homomorphism and $\|~\|$ is a nonarchimedean norm on
$F'$, then $h^{*}\|~\| = \|~\| \circ h$ is a nonarchimedean norm on
$F$. We claim that $\mathcal{T }\triangle $ is a representing object for
nonarchimedean norms on hyperfields. The key lemma follows.\vspace*{-2pt}

\begin{lemma}
Let $F$ be a hyperfield. A function $\|~\| : F \to \mathbb{R}_{\geq 0}$
is a nonarchimedean norm if and only if $\|~\|: F \to \mathcal{T }
\triangle $ is a hyperfield homomorphism.
\end{lemma}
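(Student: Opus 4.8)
The plan is to mirror the structure of the preceding lemma for ordinary norms, so the ``easy direction'' is immediate and the work is all in one inequality. First I would observe that if $\|~\| : F \to \mathcal{T }\triangle$ is a hyperfield homomorphism, then conditions (1) and (2) of a nonarchimedean norm hold because $\|~\|$ restricts to a group homomorphism $F^\times \to \mathbb{R}_{>0}$ and sends $0$ to $0$, and condition (3) holds because $h(x\boxplus y)\subseteq h(x)\boxplus h(y)$ together with the explicit hyperaddition in $\mathcal{T }\triangle$: for $a,b\in\mathbb{R}_{\geq 0}$ one has $a\boxplus b = \{\max(a,b)\}$ if $a\neq b$ and $a\boxplus a=[0,a]$, so in every case $a\boxplus b\subseteq[0,\max\{a,b\}]$. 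Thus $\|x\boxplus y\|\subseteq[0,\max\{\|x\|,\|y\|\}]$.

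For the converse, assume $\|~\|$ is a nonarchimedean norm. As in the previous proof, from $-x\odot -x = x\odot x$ we get $\|-x\| = \|x\|$, and conditions (1), (2) give precisely that $\|~\|$ respects $0$, $1$, and $\odot$. The only thing to check is the hyperaddition containment $\|x\boxplus y\|\subseteq \|x\|\boxplus\|y\|$ in $\mathcal{T }\triangle$. Fix $z\in x\boxplus y$; without loss of generality $\|x\|\geq\|y\|$. Condition (3) applied directly gives $\|z\|\leq\max\{\|x\|,\|y\|\} = \|x\|$, so $\|z\|\in[0,\|x\|]$. If $\|x\| > \|y\|$, then $\|x\|\boxplus\|y\| = \{\|x\|\}$ in $\mathcal{T }\triangle$, so I must upgrade the bound to the equality $\|z\| = \|x\|$; this is where reversibility does the work, exactly as in the ordinary-norm lemma. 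From $z\in x\boxplus y$ reversibility gives $x\in z\boxplus -y$, hence by condition (3) again $\|x\|\leq\max\{\|z\|,\|-y\|\} = \max\{\|z\|,\|y\|\}$. Since $\|x\| > \|y\|$, the maximum cannot be $\|y\|$, so $\|x\|\leq\|z\|$, and combined with $\|z\|\leq\|x\|$ we get $\|z\| = \|x\|$, i.e. $\|z\|\in\{\|x\|\} = \|x\|\boxplus\|y\|$. If instead $\|x\| = \|y\|$, then $\|x\|\boxplus\|y\| = [0,\|x\|]$, which already contains $\|z\|$ by the first bound, so there is nothing more to do.

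I expect the main (really the only) obstacle to be bookkeeping the two cases $\|x\| > \|y\|$ versus $\|x\| = \|y\|$ against the two cases in the definition of $\boxplus$ on $\mathcal{T }\triangle$, and making sure the strict inequality $\|x\| > \|y\|$ is genuinely used to eliminate the wrong branch of the $\max$. Everything else — the homomorphism conditions on $0$, $1$, $\odot$, and the easy direction — is routine and can be dispatched in a sentence each, and the argument is essentially a transcription of the proof of the preceding lemma with $[|a-b|,a+b]$ replaced by $\max$-hyperaddition.
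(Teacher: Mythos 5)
Your proof is correct and follows the same route as the paper's: the easy direction is dispatched by unwinding the $\mathcal{T}\triangle$ hyperaddition, and the converse is handled by the same case split ($\|x\|=\|y\|$ trivially lands in $[0,\|x\|]$; $\|x\|>\|y\|$ forces $\|z\|=\|x\|$ via reversibility, $x\in z\boxplus -y$, and condition (3) applied twice). The only cosmetic difference is that you spell out the elimination of the wrong branch of the $\max$ explicitly, while the paper writes $\max\{\|y\|,\|z\|\}=\|z\|$ without comment; the underlying argument is identical.
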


\begin{proof}
It is clear that if $\|~\| : F \to \mathcal{T }\triangle $ is a
hyperfield homomorphism, then $\|~\|$ is a nonarchimedean norm.

Now suppose that $\|~\| : F \to \mathbb{R}_{\geq 0}$ is a nonarchimedean
norm. To prove that $\|~\| : F \to \mathcal{T }\triangle $ is a
homomorphism, one must show that if $z \in x \boxplus y$, then
$\|z\| \in \|x\| \boxplus \|y\|$. This is clear if $\|x\| = \|y\|$.

Thus let's assume $z \in x \boxplus y$ and $\|x\| > \|y\|$. We wish to
show that $\|z\| = \|x\|$.
\begin{equation*}
\|z\| \leq \max \{ \|x\|,\|y\| \} = \|x\|
\end{equation*}
By reversibility, $x \in -y \boxplus z$. Then
\begin{equation*}
\|x\| \leq \max \{ \|y\|,\|z\| \} = \|z\|\qedhere
\end{equation*}
\end{proof}

Thus the set of nonarchimedean norms on an arbitrary hyperfield $F$ is
given by $\{ {h^{*}\|~\|}_{\mathcal{T }\triangle } \}$, where $h$ runs
over all hyperfield homomorphisms from $F$ to $\mathcal{T }\triangle
$ and $\|~\|_{\mathcal{T }\triangle } : \mathcal{T }\triangle \to
\mathbb{R}_{\geq 0}$ is the identity function.

Categorically speaking, the functor $N : \operatorname{Hyperfield}
^{op} \to \operatorname{Set}$ given by the set of nonarchimedean norms
on a hyperfield is a representable functor with representing object
$\mathcal{T }\triangle $. In other words, the pullback gives a natural
bijection from $\operatorname{Hyperfield}(-,\mathcal{T }\triangle )$ to
$N(-)$.

There is a hyperfield homomorphism $\mathbb{K}\to \mathcal{T }\triangle
$. Thus any hyperfield has a nonarchimedean norm by sending $0$ to
$0$ and any nonzero element to $1$.

The function $\operatorname{ph}$ on $\mathbb{C}$ offers the motivating
example for the following definition.

\begin{definition}
An \emph{argument} on a hyperfield $(F,\odot ,\boxplus ,1,0)$ is a group
homomorphism $\arg : F^{\times }\to S^{1}$ satisfying\vspace*{-2pt}
\begin{enumerate}%
\item
$\arg (-x) = -\arg (x)$.
\item
If $\arg (x) = \arg (y)$, then $\arg (x \boxplus y) = \arg (x)$.
\item
If $\arg (x) \neq \pm \arg (y)$ and if $z \in x \boxplus y$, then
$\arg (z)$ is on the shortest open arc on the circle connecting
$\arg (x)$ and $\arg (y)$.\vspace*{-2pt}
\end{enumerate}
\end{definition}

An argument extends to a function $\arg : F \to S^{1} \cup \{0\}$ by
setting $\arg (0) = 0$.

We claim that $\mathbb{P}$ is a representing object for arguments on
hyperfields. The key lemma follows.

\begin{lemma}
Let $F$ be a hyperfield. A function $\arg : F^{\times }\to S^{1}$ is an
argument if and only if $\arg : F \to \mathbb{P}$ is a hyperfield
homomorphism.
\end{lemma}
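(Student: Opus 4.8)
The plan is to mimic the proofs of the analogous lemmas for norms and nonarchimedean norms. One direction is routine: if $\arg : F \to \mathbb P$ is a hyperfield homomorphism, then multiplicativity on $F^\times$ comes from $\arg(x\odot y)=\arg(x)\odot\arg(y)$, property (1) from $\arg(-x)=\arg(-1\odot x)=\arg(-1)\odot\arg(x)=-\arg(x)$ (using that $\arg(-1)$ is a square root of $\arg(1)=1$ other than $1$, hence $-1$), and properties (2) and (3) by inspecting the definition of $\boxplus$ in $\mathbb P$ together with $\arg(x\boxplus y)\subseteq\arg(x)\boxplus\arg(y)$: if $\arg(x)=\arg(y)=a$ then $\arg(x)\boxplus\arg(y)=a\boxplus a=\{a\}$, and if $\arg(x)\neq\pm\arg(y)$ then $\arg(x)\boxplus\arg(y)$ is the shortest open arc between them, so every element of $\arg(x\boxplus y)$ lies on that arc.

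For the converse, assume $\arg : F^\times \to S^1$ is an argument, extend it by $\arg(0)=0$, and verify the three conditions for a hyperfield homomorphism $F\to\mathbb P$. The conditions $\arg(0)=0$, $\arg(1)=1$, and $\arg(x\odot y)=\arg(x)\odot\arg(y)$ are immediate (the last is just the group-homomorphism property, with the degenerate cases involving $0$ handled separately). The content is the inclusion $\arg(x\boxplus y)\subseteq\arg(x)\boxplus\arg(y)$, which I would prove by splitting into cases according to the trichotomy governing $\boxplus$ in $\mathbb P$. First dispose of the cases where $x=0$ or $y=0$, which are trivial. If $\arg(x)=\arg(y)$, then for $z\in x\boxplus y$ axiom (2) of an argument gives $\arg(z)=\arg(x)$, so $\arg(z)\in\{\arg(x)\}=\arg(x)\boxplus\arg(x)$. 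If $\arg(x)=-\arg(y)$, then $\arg(x)\boxplus\arg(y)=\{-\arg(x),0,\arg(x)\}$, and I must check that $\arg(z)\in\{-\arg(x),0,\arg(x)\}$ for every $z\in x\boxplus y$: if $z=0$ this is clear, and if $z\neq 0$ I need $\arg(z)=\pm\arg(x)$. If $\arg(x)\neq\pm\arg(y)$, then axiom (3) of an argument puts $\arg(z)$ on the shortest open arc between $\arg(x)$ and $\arg(y)$, which is exactly $\arg(x)\boxplus\arg(y)$ in $\mathbb P$, so we are done — provided we also know $z\neq0$, i.e.\ that $0\notin x\boxplus y$ when $\arg(x)\neq\pm\arg(y)$.

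The main obstacle is the case $\arg(x)=-\arg(y)$ with $z\neq0$: the argument axioms as stated only directly constrain $\arg(z)$ when $\arg(x)=\arg(y)$ or when $\arg(x)\neq\pm\arg(y)$, so I need an argument that $z\in x\boxplus y$ with $\arg(x)=-\arg(y)$ forces $\arg(z)=\pm\arg(x)$. I would extract this from reversibility: $z\in x\boxplus y$ gives $x\in z\boxplus(-y)$, and $\arg(-y)=-\arg(y)=\arg(x)$. If $\arg(z)\neq\pm\arg(x)$, then applying axiom (3) to $x\in z\boxplus(-y)$ would force $\arg(x)$ to lie strictly between $\arg(z)$ and $\arg(-y)=\arg(x)$ on the shortest open arc, which is impossible since $\arg(x)$ is an endpoint and the arc is open; hence $\arg(z)=\pm\arg(x)$, as needed. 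The same reversibility trick, combined with the fact that in an abelian hypergroup $0\in u\boxplus v$ iff $v=-u$, handles the remaining bookkeeping: if $\arg(x)\neq\pm\arg(y)$ and $0\in x\boxplus y$ then $y=-x$, forcing $\arg(y)=-\arg(x)$, a contradiction, so indeed $z\neq 0$ in that case. Assembling these case checks completes the verification that $\arg:F\to\mathbb P$ is a hyperfield homomorphism.
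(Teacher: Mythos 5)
Your proposal is correct and takes essentially the same route as the paper. The key step — in the case $\arg(y)=-\arg(x)$, apply reversibility and then axiom (3) to get a contradiction from the fact that a shortest open arc does not contain its endpoint — is exactly the paper's argument; you use $x\in z\boxplus(-y)$ where the paper uses $y\in z\boxplus(-x)$, but these are mirror images. Your extra bookkeeping (the $x=0$ or $y=0$ cases, and ruling out $0\in x\boxplus y$ when $\arg(x)\neq\pm\arg(y)$ via the uniqueness of additive inverses) is a welcome explicit treatment of details the paper leaves implicit.
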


\begin{proof}
It is clear that if $\arg : F \to \mathbb{P}$ is hyperfield
homomorphism, then $\arg : F^{\times }\to S^{1}$ is an argument.

Now suppose $\arg : F^{\times }\to S^{1}$ is an argument. We need to
show that   $\arg (x \boxplus y)
\subseteq \arg (x) \boxplus \arg (y)$, for each $x,y \in F^{\times}$. The only nontrivial case is when
$\arg (y) = \arg (-x)$. By reversibility, if an element $z \in x
\boxplus y$, then $y\in z\boxplus -x$. If $\arg (z)$ were not in
$\arg (x) \boxplus \arg (y) = \{0, \pm \arg (x)\}$, then $\arg (z
\boxplus -x)$ would be\vadjust{\goodbreak} contained in the shortest open arc connecting
$\arg (z)$ and $\arg (-x)=\arg (y)$. In particular, $\arg (z\boxplus
-x)$ would not contain $\arg (y)$, a contradiction.
\end{proof}

\begin{definition}
A \emph{$\Phi $-argument} on a hyperfield $(F,\odot ,\boxplus ,1,0)$ is
a group homomorphism $\arg : F^{\times }\to S^{1}$ satisfying\vspace*{-3pt}
\begin{enumerate}%
\item
$\arg (-x) = -\arg (x)$.
\item
If $\arg (x) = \arg (y)$, then $\arg (x \boxplus y) = \arg (x)$.
\item
If $\arg (x) \neq \pm \arg (y)$ and if $z \in x \boxplus y$, then
$\arg (z)$ is on the shortest closed arc on the circle connecting
$\arg (x)$ and $\arg (y)$.\vspace*{-3pt}
\end{enumerate}
\end{definition}

An argument is a $\Phi $-argument. Here is an example of a
$\Phi $-argument on a field which is not an argument. If $\alpha
\in \mathbb{C}$, then a $\Phi $-argument on the rational function field
$\mathbb{C}(x)$ is given by $\arg _{\alpha } ((x-\alpha )^{n}f(x)/g(x))
= \operatorname{ph}(f(\alpha )/g(\alpha ))$ where $n$ is an integer and
$f$ and $g$ are polynomials which do not have $\alpha $ as a root.

We claim that $\Phi $ is a representing object for $\Phi $-arguments on
hyperfields. The key lemma follows.\vspace*{-2pt}

\begin{lemma}
Let $F$ be a hyperfield. A function $\arg : F^{\times }\to S^{1}$ is a
$\Phi $-argument if and only if $\arg : F \to \Phi $ is a hyperfield
homomorphism.\vspace*{-2pt}
\end{lemma}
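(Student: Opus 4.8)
The plan is to mimic the structure of the proof of the preceding lemma on ordinary arguments, since a $\Phi$-argument differs from an argument only by replacing ``shortest open arc'' with ``shortest closed arc'' in axiom (3), and correspondingly $\Phi$ differs from $\mathbb P$ only in that $a \boxplus -a = S^1 \cup \{0\}$ (rather than $\{-a,0,a\}$) and $a \boxplus b$ is the shortest \emph{closed} arc when $a \neq \pm b$. One direction is immediate: if $\arg : F \to \Phi$ is a hyperfield homomorphism, then unwinding the definition of $\boxplus$ in $\Phi$ shows $\arg$ restricted to $F^\times$ satisfies the three axioms of a $\Phi$-argument, exactly as in the $\mathbb P$ case.

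For the converse, suppose $\arg : F^\times \to S^1$ is a $\Phi$-argument; I would extend it by $\arg(0)=0$ and check that $\arg(x \boxplus y) \subseteq \arg(x) \boxplus \arg(y)$ for all $x, y \in F$. The cases where one of $x,y$ is $0$, or where $\arg(x) = \arg(y)$, or where $\arg(x) \neq \pm\arg(y)$, follow directly from axioms (2), (3) and the definition of $\boxplus$ in $\Phi$ for non-antipodal phases; in the last of these, axiom (3) literally says $\arg(z)$ lies on the shortest closed arc between $\arg(x)$ and $\arg(y)$, which is precisely $\arg(x)\boxplus\arg(y)$ in $\Phi$. So the only case requiring thought is $\arg(y) = \arg(-x) = -\arg(x)$, where $\arg(x)\boxplus\arg(y) = S^1 \cup\{0\}$ in $\Phi$. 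But then the required inclusion $\arg(z) \in S^1 \cup \{0\}$ holds trivially, since $\arg$ takes values in $S^1 \cup \{0\}$ by construction. Hence the inclusion holds in every case.

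Thus, unlike the proof for $\mathbb P$, the antipodal case here is the \emph{easy} one rather than the hard one, because $\Phi$'s hyperaddition of antipodes is the whole of $S^1\cup\{0\}$; the genuine content of the lemma is entirely in the non-antipodal cases, which are handled directly by axioms (2) and (3) together with the explicit description of $\boxplus$ on $\Phi$. The main obstacle, such as it is, is simply bookkeeping: making sure the ``shortest closed arc'' language in axiom (3) matches the definition of $\boxplus$ in $\Phi$ exactly, including the degenerate situation where $\arg(x)$ and $\arg(y)$ are distinct but the ``shortest closed arc'' is just the arc between them (with its two endpoints), and confirming that no case has been overlooked when passing from the pair $(x,y)$ to the pair of phases $(\arg(x),\arg(y))$. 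I expect the write-up to be a couple of lines, strictly shorter than the proof of the $\mathbb P$ lemma.
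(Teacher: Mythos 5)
The paper explicitly omits the proof of this lemma (``We will omit the proof of this lemma''), so there is no reference argument to compare against; but your proof is correct and is almost certainly what the authors had in mind. Your central observation --- that the antipodal case $\arg(y) = -\arg(x)$, which in the $\mathbb{P}$ lemma required a genuine argument via reversibility, here collapses to a triviality because $\arg(x) \boxplus \arg(y) = S^{1} \cup \{0\}$ in $\Phi$ --- is exactly the reason the authors could afford to skip this proof after writing out the $\mathbb{P}$ one. The remaining cases (a factor being $0$, equal arguments, non-antipodal distinct arguments) do follow immediately from axioms (2), (3) and the definition of $\boxplus$ on $\Phi$ as you say; the only bookkeeping worth making explicit in a final write-up is that axioms (2) and (3) implicitly preclude $0 \in x \boxplus y$ in those cases (since $y = -x$ forces $\arg(y) = -\arg(x) \neq \arg(x)$), so one never has to reconcile $\arg(0) = 0$ with a singleton or arc not containing $0$. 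For the forward direction you should also note that $\arg(-1) = -1$ (hence axiom (1)) follows from $0 \in \arg(1)\boxplus\arg(-1)$ together with the uniqueness of additive inverses in $\Phi$, exactly as in the $\mathbb{P}$ case.
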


We will omit the proof of this lemma.

This above discussion show a possible utility for the notion of
hyperfield. The notions of orderings, norms, nonarchimedean norms,
arguments, and $\Phi $-arguments are important for fields, but to find
a representing object one needs to leave the category of fields.
Likewise, to find a final object one has to leave the category of fields.\vspace*{-2pt}

\begin{remark}
Jaiung Jun \cite{jjun17} has studied morphisms to $\mathbb{K}$,
$\mathbb{S}$, and $\mathbb{Y}$ from a more general algebro-geometric
viewpoint, considering not just sets $\operatorname{Hom}(A,
\mathbb{F})$ of morphisms of hyperfields for $\mathbb{F}\in \{
\mathbb{K},\mathbb{S}, \mathbb{Y}\}$, but also sets $
\operatorname{Hom}(\operatorname{Spec}(\mathbb{F}), X)$ of morphisms of
hyperring schemes, where $X$ is an algebraic variety (in the classical
sense viewed as a hyperring scheme). This recasts the set Hom(A,\,F) by
means of the functor Spec, i.e., Hom(A,\,F)=Hom(Spec F, Spec A).\vspace*{-2pt}
\end{remark}

\subsubsection{Topological hyperfields}%
\label{topological_hyperfields}

\begin{definition}
\label{top_hyper}
A \emph{topological hyperfield} is a hyperfield $(F, \odot , \boxplus
, 1, 0)$ with a topology $T$ on $F$ satisfying:\vspace*{-2pt}
\begin{enumerate}%
\item
\label{nonzero_open}
$F-\{0\}$ is open.
\item
Multiplication $F \times F \to F$ is continuous.
\item
The multiplicative inverse map $F-\{0\} \to F-\{0\}$ is continuous.\vspace*{-2pt}
\end{enumerate}
\end{definition}

Conditions (2) and (3) guarantee that $(F^{\times }, \odot , 1)$ is
a topological group.

\begin{remark}
The reader may think it is odd that we do not require addition to be
continuous. We do too! But, in our defense: we never need this, and
there are several competing notions of continuity of a multivalued map,
see Section~8 of \cite{Viro}.
\end{remark}

We wish to topologize all of the hyperfields in Diagram \eqref{3-by-3} so that all the maps are continuous and so that all the
dotted arrows are hyperfield homotopy equivalences (defined below). Of
course we give $\mathbb{R}$ and $\mathbb{C}$ their usual topologies. We
topologize $\triangle $, $\mathbb{S}$, $\mathbb{P}$ and $\mathbb{K}$ as
quotients of $\mathbb{R}$ and $\mathbb{C}$, noting that if $F$ is a
topological hyperfield and $S$ is a subgroup of $F^{\times }$, then
$F \to F/{}_{m}S$ is a morphism of topological hyperfields, where
$F/{}_{m}S$ is given the quotient topology. We topologize $\Phi $ so
that the identity map $\mathbb{P}\to \Phi $ is a homeomorphism. Thus:
\begin{itemize}%
\item
the topology on $\triangle $ is the same as its topology as a subspace
of $\mathbb{R}$.
\item
the open sets in $\mathbb{S}$ are $\emptyset $, $\{+1\}$, $\{-1\}$,
$\{+1, -1\}$, and $\{+1, -1, 0\}$.
\item
the open sets in $\mathbb{P}$ and $\Phi $ are the usual open sets in
$S^{1}$ together with the set $S^{1}\cup \{0\}$.
\item
the open sets in $\mathbb{K}$ are $\emptyset $, $\{1\}$, and
$\{0,1\}$.
\end{itemize}

Although we can give the hyperfields $\mathcal{T }\mathbb{R}$,
$\mathcal{T }\mathbb{C}$, and $\mathcal{T }\triangle $ the topologies
inherited from the complex numbers, with these topologies many of the
maps in Diagram \eqref{3-by-3} are not continuous, for example,
$\mathbb{S}\to \mathcal{T }\mathbb{R}$. Instead we will use the
$0$-coarsening described below.

Let $(F, \odot , \boxplus , 1, 0,T)$ be a topological hyperfield. Define
two new topologies ${_{0}T}$ and $T_{0}$ on $F$, called the
\emph{$0$-fine topology} and the \emph{$0$-coarse topology} respectively,
with open sets ${_{0}T} = \{0\} \cup \{ U \in T \}$ and $T_{0} = \{F
\} \cup \{ U \in T : 0 \notin U \}$. Thus in ${_{0}T}$, $\{0\}$ is an
open set, while in $T_{0}$ there are no proper open neighborhoods of
$0$. Note that both ${_{0}T}$ and $T_{0}$ depend only on the topology
on $F^{\times }$, in fact ${_{0}T}$ is the finest topology on $F$ which
restricts to a fixed topological group $F^{\times }$ and $T_{0}$ is the
coarsest. We abbreviate $(F, \odot , \boxplus , 1, 0,{_{0}T})$ by
${_{0}F}$ and $(F, \odot , \boxplus , 1, 0,T_{0})$ by $F_{0}$. For any
topological hyperfield $F$, the identity maps ${_{0}F} \to F \to F
_{0}$ are continuous. A topological hyperfield is \emph{$0$-fine} if
$F = {_{0}F}$, i.e.~$\{0\}$ is an open set. A topological hyperfield is
\emph{$0$-coarse} if $F = F_{0}$, i.e.~the only open neighborhood of
$0$ is $F$. The hyperfields $\mathbb{S}$, $\mathbb{P}$, $\Phi $, and
$\mathbb{K}$ with the topologies described above are $0$-coarse.

We then enrich Diagram \eqref{3-by-3} to a diagram of topological
hyperfields.
%
\begin{equation}
\label{3-by-3top}
\large
\begin{tikzcd}
[row sep=small] \mathbb R\arrow[r, hook] \arrow[dd,
"\operatorname{ph}"'] &\mathbb C\arrow[r, "|~|"] \arrow[d,
"\operatorname{ph}"'] & \triangle \arrow[d, "\operatorname{Id} "'] \\
&\mathbb P\arrow[dd]\arrow[rd,"|~|"]& \triangle _0 \arrow[d,
"\operatorname{ph}"'] \\ \mathbb S\arrow[ru, hook]\arrow[rd,hook]
\arrow[dd, dashed] & & \mathbb K\arrow[dd, dashed] \arrow[u, shift right
= 1ex, dashed] \\ & \Phi \arrow[ru, "|~|"] \arrow[d, dashed]&
\\ {\mathcal T
}\mathbb R_0 \arrow[r, hook] \arrow[uu, shift right= 1ex,
"\operatorname{ph}"'] & {\mathcal T }\mathbb C_0 \arrow[r, "|~|"] \arrow[u,
shift right= 1ex, "\operatorname{ph}"']&{\mathcal T} \triangle _0 \arrow[uu,
shift right= 1ex, "\operatorname{ph}"']
\end{tikzcd}
\end{equation}

All the maps in the diagram are continuous, and all the dotted arrows
are sections of their corresponding solid arrow maps. Note also that for
each solid, dotted pair, the target of the solid arrow could be
considered both as a quotient and a sub-topological hyperfield of the
domain. The authors speculate that dequantization should provide a
justification for endowing $\mathcal{T }\mathbb{R}$ with the 0-coarse
topology, but we have not been able to make this precise.

Recall that the hyperfield $\mathbb{K}$ is a final object in the
category of hyperfields, and that $\mathbb{S}$, $\triangle $,
$\mathcal{T }\triangle $, $\mathbb{P}$, and $\Phi $ are representing
objects for the set of orderings, norms, nonarchimedean norms,
arguments, and $\Phi $-arguments on a given hyperfield. Similar
considerations apply to the topological hyperfields. An \emph{ordered
topological hyperfield} $F$ is a topological hyperfield $F$ with an
ordering in which $F_{+}$ is an open set. Norms, nonarchimedean norms,
arguments, and $\Phi $-arguments on a topological hyperfield are
required to be continuous. Then $\mathbb{K}$ is a final object in the
category of topological hyperfields and $\mathbb{S}$, $\triangle $,
$\mathcal{T }\triangle $, $\mathbb{P}$, and $\Phi $ are representing
objects for the set of orderings, norms, nonarchimedean norms,
arguments, and $\Phi $-arguments on a given topological hyperfield.

\begin{definition}
A \textbf{hyperfield homotopy} between topological hyperfields $F$ and
$F'$ is a continuous function $H: F\times I\to F'$ such that, for each
$t\in I$, the function $H_{t}:F\to F'$ taking $x$ to $H(x,t)$ is a
homomorphism of hyperfields.
\end{definition}

The following result, which is hardly more than an observation, is
fundamental to all of our considerations on Grassmannians and
realization spaces associated to hyperfields in
Diagram~\eqref{3-by-3top}.
%
\begin{prop}
\label{prop:homotopies}
Let $F\in \{\triangle _{0}, \mathcal{T }\mathbb{R}_{0},\mathcal{T }
\mathbb{C}_{0},\mathcal{T }\triangle _{0}\}$. The function $H : F
\times I \to F$
\begin{equation*}
H(x,t):=
\begin{cases}
0
& x = 0
\\
x|x|^{-t}
& x \neq 0
\end{cases}
\end{equation*}
is a hyperfield homotopy.
\end{prop}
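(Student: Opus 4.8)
The plan is to verify the two requirements in the definition of a hyperfield homotopy — continuity of $H$, and the homomorphism property of each slice $H_{t}\colon F\to F$ — and to reduce the latter to two elementary properties of a single real function. Write $s:=1-t\in[0,1]$; for $x\ne 0$ one has $H(x,t)=x|x|^{-t}=\psi_{s}(|x|)\cdot\tfrac{x}{|x|}$, where $\psi_{s}\colon[0,\infty)\to[0,\infty)$ sends $r>0$ to $r^{s}$ and $0$ to $0$. Thus $H_{t}$ keeps the phase $\tfrac{x}{|x|}$ and rescales the modulus by $\psi_{s}$, and the facts that do all the work are that $\psi_{s}$ is weakly increasing and subadditive (the latter since $u^{s}\ge u$ for $u\in[0,1]$). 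The identities $H_{t}(xy)=H_{t}(x)H_{t}(y)$, $H_{t}(0)=0$, $H_{t}(1)=1$, and $H_{t}(-x)=-H_{t}(x)$ are immediate from the definition, so the only substantive point about each $H_{t}$ is the hyperaddition inclusion $H_{t}(a\boxplus b)\subseteq H_{t}(a)\boxplus H_{t}(b)$.

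For continuity I would use the shape of the $0$-coarse topology $T_{0}$ on $F$: apart from $F$ itself, its open sets are exactly the ambient-open sets avoiding $0$, and on $F-\{0\}$ it restricts to the usual topology. Then $H^{-1}(F)=F\times I$, while for $V\in T_{0}$ with $0\notin V$ the relation $H(x,t)=0\iff x=0$ forces $H^{-1}(V)\subseteq(F-\{0\})\times I$, where $H$ is the genuinely continuous map $(x,t)\mapsto x\,e^{-t\log|x|}$; hence $H^{-1}(V)$ is open and $H$ is continuous.

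For the hyperaddition inclusion one may take $a,b\ne 0$, a zero argument being trivial. When $F=\triangle_{0}$, $H_{t}=\psi_{s}$ on $\mathbb{R}_{\ge 0}$ and $a\boxplus b=[|a-b|,a+b]$; taking $a\ge b$, monotonicity and subadditivity of $\psi_{s}$ give $\psi_{s}(c)\le\psi_{s}(a)+\psi_{s}(b)$ from $c\le a+b$, and $\psi_{s}(a)-\psi_{s}(b)\le\psi_{s}(c)$ from $a\le b+c$, which together say exactly $\psi_{s}(c)\in\psi_{s}(a)\boxplus\psi_{s}(b)$. When $F\in\{\mathcal{T}\mathbb{R}_{0},\mathcal{T}\mathbb{C}_{0},\mathcal{T}\triangle_{0}\}$ I would run through the usual cases of tropical addition, in each of which the inclusion falls out of the slogan ``$H_{t}$ preserves phase and rescales modulus by an odd, increasing, $0$-fixing map'': if $|a|>|b|$ then $a\boxplus b=\{a\}$ and $|H_{t}(a)|=\psi_{s}(|a|)\ge\psi_{s}(|b|)=|H_{t}(b)|$, so $H_{t}(a)\in H_{t}(a)\boxplus H_{t}(b)$; if $a=-b\ne 0$ then $a\boxplus b=\{z\in F:|z|\le|a|\}$ (an interval, a disk, or $[0,a]$, respectively), whose $H_{t}$-image lies in $\{w:|w|\le|H_{t}(a)|\}=H_{t}(a)\boxplus(-H_{t}(a))$; the case $a=b$ is immediate; and for $\mathcal{T}\mathbb{C}_{0}$ with $|a|=|b|$, $a\ne\pm b$, the restriction of $H_{t}$ to the circle of radius $|a|$ is multiplication by the positive scalar $|a|^{s-1}$, a similarity carrying that circle onto the circle of radius $\psi_{s}(|a|)$ and shortest closed arcs to shortest closed arcs, whence $H_{t}(a\boxplus b)=H_{t}(a)\boxplus H_{t}(b)$.

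I do not anticipate a real obstacle here: the one genuine inequality is the subadditivity of $\psi_{s}$, needed only for $\triangle_{0}$, and everything else is the bookkeeping just sketched. The two places most prone to a slip are the endpoint $t=1$ — where $\psi_{0}$ is constant on $(0,\infty)$ and $H_{1}$ degenerates to the phase map (the inequalities above still hold because we set $\psi_{0}(0)=0$, but the $\triangle_{0}$ and $\mathcal{T}\triangle_{0}$ sub-cases there deserve a careful look) — and the $\mathcal{T}\mathbb{C}_{0}$ arc case, where one should confirm that scaling by a positive real genuinely sends shortest arcs to shortest arcs.
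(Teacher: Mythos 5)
Your proof is correct, and in the one place where the paper does real work — the $\triangle_{0}$ case — you take a genuinely cleaner route. The paper reduces to showing that $1-a\le b\le 1+a$ with $a,b>0$ implies $1-a^{s}\le b^{s}\le 1+a^{s}$, and then proves the lower bound by a two-case argument and the upper bound by a separate monotonicity argument for the auxiliary function $f(a)=1+a^{s}-(1+a)^{s}$. You instead isolate monotonicity and subadditivity of $\psi_{s}(r)=r^{s}$ on $[0,\infty)$ as the single governing fact and deduce both bounds uniformly: $c\le a+b$ gives $\psi_{s}(c)\le\psi_{s}(a)+\psi_{s}(b)$, and $a\le b+c$ (the reversibility-style rewrite of $|a-b|\le c$) gives $\psi_{s}(a)-\psi_{s}(b)\le\psi_{s}(c)$. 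This subsumes the paper's derivative computation into a standard inequality and makes the symmetry between the two bounds visible; the paper's version is more self-contained in that it never appeals to subadditivity as a named fact, proving each ingredient from scratch. You are also more explicit than the paper about continuity with respect to the $0$-coarse topology and about the tropical cases ($\mathcal{T}\mathbb{R}_{0}$, $\mathcal{T}\mathbb{C}_{0}$, $\mathcal{T}\triangle_{0}$), which the paper simply declares ``clear''; your observation that $H_{t}$ preserves phase and rescales modulus by an increasing, odd, $0$-fixing map, and in the $\mathcal{T}\mathbb{C}_{0}$ case acts on each circle by a positive similarity, is exactly the right way to make that clarity precise. The endpoint $s=0$ that you flag is in fact fine, since $\psi_{0}$ is still weakly increasing and subadditive with $\psi_{0}(0)=0$.
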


Note that in each case $H_{0}$ is the identity and $H_{1}$ maps $F$ to
$\operatorname{ph}(F)$.

\begin{proof}
Certainly in each case $H$ is continuous, and for all $x$, $y$, and
$t$, $H_{t}(x)\odot H_{t}(y)=H_{t}(x\odot y)$.

For the hyperaddition operation on $F\in \{\mathcal{T
}\mathbb{R}_{0},\mathcal{T }\mathbb{C}_{0},\mathcal{T }\triangle _{0}
\}$ it is clear that $H_{t}(x\boxplus y)= H_{t}(x)\boxplus H_{t}(y)$ for
all $x,y,t$.

For the hyperaddition operation on $\triangle _{0}$: consider $x$,
$y$, and $z$ such that $z\in x\boxplus y$. Without loss of generality
assume $x \geq y > 0$. Note that $H_{t}(x) = x^{1-t} = x^{s}$ where
$s \in [0,1]$. If $z\in x\boxplus y$, then $x-y\leq z\leq x+y$. We wish
to show $x^{s}-y^{s}\leq z^{s}\leq x^{s}+y^{s}$.

Dividing through by $x$ respectively $x^{s}$, it's enough to show that
if $1-a\leq b\leq 1+a$ and $ a ,b>0$ then $1-a^{s}\leq b^{s}\leq 1+a
^{s}$.\goodbreak

To see the first inequality, we write the hypothesis as $1\leq a+b$ and
note that:\vspace*{-2pt}
\begin{itemize}%
\item
if at least one of $a$ and $b$ is greater than or equal to 1, then at
least one of $a^{s}$ and $b^{s}$ is greater than or equal to 1, and so
$1\leq a^{s}+b^{s}$.
\item
Otherwise, since $s\in [0,1]$ we have $a^{s}\geq a$ and $b^{s}\geq b$,
so $a^{s}+b^{s}\geq a+b\geq 1$.\vspace*{-2pt}
\end{itemize}

To see the second inequality: $b\leq 1+a$ implies $b^{s}\leq (1+a)^{s}$.
Thus it suffices to show that $(1+a)^{s} \leq 1+a^{s}$, i.e., the
function $f(a)=1+a^{s}-(1+a)^{s}$ is nonnegative on $\mathbb{R}_{
\geq 0}$. Note $f(0)=0$ and $f'(a)=s((a^{s-1}-(1+a)^{s-1})$. Since
$0<a<a+1$ and $s-1\leq 0$ we have $a^{s-1}\geq (a+1)^{s-1}$, and so
$f$ is increasing on $\mathbb{R}_{\geq 0}$.
\end{proof}

\begin{remark}
Under the bijection $\log :\mathcal{T }\triangle \to \mathbb{Y}$ from
the tropical triangle hyperfield to the tropical hyperfield, the
homotopy on $\mathcal{T }\triangle _{0}$ given in
Proposition~\ref{prop:homotopies} pushes forward to a straight-line
homotopy on $\mathbb{Y}_{0}$.\vspace*{-1pt}
\end{remark}

A \emph{homotopy equivalence $h : F \to F'$ of topological hyperfields}
is a continuous hyperfield homomorphism such that there exists a
continuous hyperfield homomorphism $g : F' \to F$ and hyperfield
homotopies $G : F \times I \to F$ and $H : F' \times I \to F'$ such that
$G(x,0) = x$, $G(x,1) = g(h(x))$, $H(x,0) = x$, and $H(x,1) = h(g(x))$.
One says that $g$ and $h$ are \emph{homotopy inverses}. Topological
hyperfields $F$ and $F'$ are \emph{homotopy equivalent} if there is a
homotopy equivalence between $F$ and $F'$.

Returning to Diagram~\eqref{3-by-3}, we have the following results about
the dotted arrows.\vspace*{-2pt}

\begin{corollary}
\label{prop:4equivs}
The following inclusions are homotopy equivalences of topological
hyperfields.\vspace*{-2pt}
\begin{enumerate}[\textit{2.}]%
\item[\textit{1.}]
$\mathbb{K}\hookrightarrow \triangle _{0}$
\item[\textit{2.}]
$\mathbb{S}\hookrightarrow \mathcal{T }\mathbb{R}_{0}$
\item[\textit{3.}]
$\Phi \hookrightarrow \mathcal{T }\mathbb{C}_{0}$
\item[\textit{4.}]
$\mathbb{K}\hookrightarrow \mathcal{T }\triangle _{0}$\vspace*{-2pt}
\end{enumerate}
In each case, the homotopy inverse is $\operatorname{ph}$.\vspace*{-2pt}
\end{corollary}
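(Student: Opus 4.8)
The plan is to derive all four cases from Proposition~\ref{prop:homotopies} together with the observation that in each case $\operatorname{ph}$ composed with the relevant inclusion behaves correctly on both sides. First I would note that each inclusion $\iota$ on the list is a continuous hyperfield homomorphism: this is part of the content of Diagram~\eqref{3-by-3top}, where these four maps appear as the dotted arrows. Likewise $\operatorname{ph}$ is a continuous hyperfield homomorphism in each case — for $\triangle_0 \to \mathbb{K}$ and $\mathcal{T}\triangle_0 \to \mathbb{K}$ it is the unique map to the final object, and for $\mathcal{T}\mathbb{R}_0 \to \mathbb{S}$ and $\mathcal{T}\mathbb{C}_0 \to \Phi$ it is the phase map, whose continuity and homomorphism property are recorded in Diagram~\eqref{3-by-3top}. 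So I would take $h = \iota$ and $g = \operatorname{ph}$ in the definition of homotopy equivalence.

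Next I would check the two composites. In every one of the four cases, $\operatorname{ph} \circ \iota$ is the identity on the smaller hyperfield: the image of $\iota$ consists of $0$ together with elements of $S^1$ (for $\mathbb{S}$, $\Phi$) or of $\{0,1\}$ (for $\mathbb{K}$), and on such elements $|x| = 1$, so $\operatorname{ph}(x) = x/|x| = x$. Thus $g \circ h = \operatorname{id}$, and the homotopy $G$ may be taken to be the constant homotopy $G(x,t) = x$, which is trivially a hyperfield homotopy. For the other composite, $\iota \circ \operatorname{ph}$ is, as a self-map of $F \in \{\triangle_0, \mathcal{T}\mathbb{R}_0, \mathcal{T}\mathbb{C}_0, \mathcal{T}\triangle_0\}$, exactly the map $x \mapsto \operatorname{ph}(x)$ that appears as $H_1$ in Proposition~\ref{prop:homotopies}. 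Therefore the hyperfield homotopy $H$ produced by that proposition has $H_0 = \operatorname{id}_F$ and $H_1 = \iota \circ \operatorname{ph}$, which is precisely the homotopy required (after reparametrizing $H(x,t)$ by $t \mapsto 1-t$ if one insists on the endpoint conventions $H(x,0)=x$, $H(x,1)=h(g(x))$, or simply by reading Proposition~\ref{prop:homotopies} with the stated endpoints). This exhibits $\iota$ and $\operatorname{ph}$ as mutually homotopy-inverse, proving the corollary.

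The one point that genuinely requires the earlier work, rather than being a formality, is the verification that $H$ from Proposition~\ref{prop:homotopies} is a hyperfield homotopy — i.e. that each $H_t$ is a homomorphism — and in particular that this proposition applies to all four targets; but that is exactly what Proposition~\ref{prop:homotopies} asserts, for the list $\{\triangle_0, \mathcal{T}\mathbb{R}_0, \mathcal{T}\mathbb{C}_0, \mathcal{T}\triangle_0\}$, which matches the four targets here. So the only real obstacle has already been dispatched upstream, and the corollary is, as the text says, essentially an observation. I would therefore write the proof as: apply Proposition~\ref{prop:homotopies} to get $H$ with $H_1 = \operatorname{ph}$; note $\operatorname{ph} \circ \iota = \operatorname{id}$ so the constant homotopy works on the other side; conclude that $\iota$ is a homotopy equivalence with homotopy inverse $\operatorname{ph}$.
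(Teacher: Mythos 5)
Your proposal is correct and follows exactly the paper's argument: observe that $\operatorname{ph}\circ\iota$ is the identity (so the constant homotopy handles that side), and that Proposition~\ref{prop:homotopies} supplies the homotopy from $\operatorname{id}_F$ to $\iota\circ\operatorname{ph}$. (The parenthetical about possibly reparametrizing is unnecessary, since $H$ in Proposition~\ref{prop:homotopies} already has $H_0=\operatorname{id}$ and $H_1=\operatorname{ph}$, matching the required endpoint convention.)
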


\begin{proof}
In each case the composition $\operatorname{ph}\circ \text{inc}$ is the
identity. The homotopy between the identity and $\text{inc}\circ
\operatorname{ph}$ is given in Proposition~\ref{prop:homotopies}.
\end{proof}

As we shall see (Theorems~\ref{thm:grassmann} and \ref{thm:realization}), a hyperfield homotopy induces a homotopy
equivalence of Grassmannians, and for each matroid $M$, a hyperfield
homotopy between the 0-coarsenings of hyperfields induces a homotopy
equivalence of realization spaces of~$M$.

For a hyperfield $F$ and a finite set $E$, \emph{projective space
$\mathbf{P}(F^{E})$} is the quotient of $F^{E} - \{0\}$ by the scalar
action of $F^{\times }$. Of course only the cardinality of $E$ is
relevant. Cognizant of this, let $F\mathbf{P}^{n-1} = \mathbf{P}(F
^{\{1,2, \dots , n\}})$. If $F$ is a topological hyperfield then
$ F \mathbf{P}^{n-1}$ inherits a topology via the product, subspace, and
quotient topologies.
%
\begin{prop}
\label{open_map}
Let $F$ be a topological hyperfield and $U\subset F^{n}-\{(0,\ldots ,0)
\}$. If $U$ is open then its image in $F\mathbf{P}^{n-1}$ is open.
\end{prop}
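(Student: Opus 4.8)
The plan is to show that the quotient map $q\colon F^{n}-\{\0\}\to F\bfP^{n-1}$ is an open map, from which the statement is immediate. By definition of the quotient topology, a set $V\subseteq F\bfP^{n-1}$ is open precisely when $q^{-1}(V)$ is open in $F^{n}-\{\0\}$. So for an open $U\subseteq F^{n}-\{\0\}$, it suffices to prove that $q^{-1}(q(U))$ is open.

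The key observation is that $q^{-1}(q(U))$ is exactly the union of the translates of $U$ under the scalar action of $F^{\times}$: that is, $q^{-1}(q(U))=\bigcup_{\lambda\in F^{\times}}\lambda\cdot U$, where $\lambda\cdot U=\{(\lambda u_{1},\ldots,\lambda u_{n}):(u_{1},\ldots,u_{n})\in U\}$. Indeed, $q(v)=q(w)$ iff $w=\lambda v$ for some $\lambda\in F^{\times}$, so a point $w$ lies in $q^{-1}(q(U))$ iff $w=\lambda u$ for some $u\in U$ and some $\lambda\in F^{\times}$. Thus it is enough to check that each $\lambda\cdot U$ is open, since a union of open sets is open.

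First I would fix $\lambda\in F^{\times}$ and consider the map $m_{\lambda}\colon F^{n}\to F^{n}$ given by coordinatewise multiplication by $\lambda$, that is $m_{\lambda}(x_{1},\ldots,x_{n})=(\lambda x_{1},\ldots,\lambda x_{n})$. Since multiplication $F\times F\to F$ is continuous (axiom (2) of a topological hyperfield) and the constant map to $\lambda$ is continuous, each coordinate of $m_{\lambda}$ is continuous, hence $m_{\lambda}$ is continuous as a map into the product $F^{n}$. Moreover $m_{\lambda}$ is a bijection with inverse $m_{\lambda^{-1}}$, which is continuous by the same argument (using that $\lambda\mapsto\lambda^{-1}$ is well-defined on $F^{\times}$ and that $\lambda^{-1}$ is just a fixed scalar here). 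Therefore $m_{\lambda}$ is a homeomorphism of $F^{n}$. It restricts to a homeomorphism of $F^{n}-\{\0\}$ onto itself, since $m_{\lambda}(\0)=\0$ and $m_{\lambda}$ is a bijection. Consequently $\lambda\cdot U=m_{\lambda}(U)$ is open in $F^{n}-\{\0\}$.

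Combining these steps: $q^{-1}(q(U))=\bigcup_{\lambda\in F^{\times}}m_{\lambda}(U)$ is a union of open sets, hence open, so $q(U)$ is open by the definition of the quotient topology, proving the proposition. There is no real obstacle here; the only mild point to be careful about is the verification that $m_{\lambda}$ is continuous as a map into the \emph{product} topology on $F^{n}$ — but this follows at once from the universal property of the product together with continuity of hyperfield multiplication, so the argument is routine.
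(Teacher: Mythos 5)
Your proof is correct and follows essentially the same route as the paper's: you observe that $q^{-1}(q(U))=\bigcup_{\lambda\in F^{\times}}\lambda\cdot U$ and that each $\lambda\cdot U$ is open because coordinatewise scalar multiplication is continuous (the paper phrases this by noting $\lambda\odot U$ is the preimage of $U$ under $x\mapsto\lambda^{-1}\odot x$; you phrase it via $m_{\lambda}$ being a homeomorphism, which is the same observation). Your remarks about the product topology simply make explicit a step the paper leaves implicit.
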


\begin{proof}
Let $\pi : F^{n}-\{(0,\ldots ,0)\}\to F\mathbf{P}^{n-1}$ be the quotient
map. Since multiplication is continuous, the function $x\to \lambda
^{-1} \odot x$ from $F$ to $F$ is continuous for each $\lambda \in F
^{\times }$, and so $\lambda \odot U$ is open. Thus the union
$\bigcup _{\lambda \in F^{\times }}\lambda \odot U$ is open. But this
union is $\pi ^{-1}(\pi (U))$, and so $\pi (U)$ is open.
\end{proof}

In the next section we will define and topologize Grassmann varieties
over hyperfields.\looseness=1

\section{The Pl\"{u}cker embedding and matroids over hyperfields}

Matroids over hyperfields generalize linear subspaces of vector spaces
$F^{n}$. For a field~$F$, the Grassmannian of $r$-subspaces of
$F^{n}$ embeds into projective space via the Pl\"{u}cker embedding
$\operatorname{Gr}(r,F^{n}) \hookrightarrow F\mathbf{P}^{
\binom{n }{r}-1}$. Furthermore the image is an algebraic variety, the
zero set of homogenous polynomials called the Grassmann--Pl\"{u}cker
relations. Strong and weak $F$-matroids over a hyperfield $F$ are
defined in~\cite{Baker-Bowler} in terms of Grassmann--Pl\"{u}cker
relations. Strong and weak $F$-matroids coincide when $F$ is a field.
We review this theory in Section~\ref{sec:plucker} and then use these
ideas to define the Grassmannian of a hyperfield in
Section~\ref{sec:defn}.

\subsection{The Pl\"{u}cker embedding}%
\label{sec:plucker}

\begin{definition}
Let $W$ be an $n$-dimensional vector space over a field $F$. Let
$\operatorname{Gr}(r,W)$ be the set of $r$-dimensional subspaces of
$W$. The \emph{Pl\"{u}cker embedding} is given by
\begin{align*}
\operatorname{Gr}(r,W)
& \to \mathbf{P}(\Lambda ^{r} W)\cong F
\mathbf{P}^{\binom{n}{r}-1}
\\
V = \text{Span}(v_{1}, \dots , v_{r})
& \mapsto v_{1} \wedge \dots
\wedge v_{r}
\end{align*}
\end{definition}

The fact that $\Lambda ^{r} V$ is a rank 1 vector space shows that the
above map is independent of the choice of basis for $V$.

More naively, we can see the embedding of $\operatorname{Gr}(r, F^{n})$
into $F\mathbf{P}^{\binom{n}{r}-1}$ in matrix terms. Let $
\mathrm{Mat}(r,n)$ be the set of $r\times n$ matrices of rank $r$ over
$F$. Thus $GL_{r}(F)$ acts on $\mathrm{Mat}(r,n)$ by left
multiplication, and the quotient $GL_{r}(F)\backslash \mathrm{Mat}(r,n)$
is homeomorphic to $\operatorname{Gr}(r, F^{n})$ by the map
$GL_{r}(F)M \mapsto \mathop{\mathrm{row}}(M)$, the row space of $M$. For each
$\{i_{1},\ldots , i_{r}\}\subseteq [n] =\{1,2, \dots , n\}$ with
$i_{1}<\cdots <i_{r}$ and $M\in \mathrm{Mat}(r,n)$ let $|M_{i_{1},
\ldots , i_{r}}|$ be the determinant of the submatrix with columns
indexed by $\{i_{1},\ldots , i_{r}\}$. Consider the map $P:
\mathrm{Mat}(r,n)\to F^{\binom{n}{r}}-\{\mathbf{0}\}$ taking each
$M$ to $(|M_{i_{1}, \ldots , i_{r}}|)_{\{i_{1},\ldots , i_{r}\}\subseteq
[n]}$. Multiplication of $M$ on the left by $A\in GL_{r}(F)$ amounts to
a multiplication of $P(M)$ by the determinant of $A$, and so $P$ induces
maps
\begin{equation*}
\begin{tikzcd}
\operatorname{Gr} (r, F^n)\arrow[r,"\cong "]\arrow[rr, bend right=20, "\tilde P"]&
GL_r(F)\backslash \operatorname{Mat}(r,n)\arrow[r,"P/\sim "] &
F\mathbf P^{{n\choose r}-1}
\end{tikzcd}
\end{equation*}
The map $\tilde{P}$ is the Pl\"{u}cker embedding. It is easily seen to
be injective: each coset $GL_{r}(F)M$ has a unique element in reduced
row-echelon form, and this element can be recovered from $\tilde{P}(M)$.

A point $\vec{x}\in F^{\binom{n}{r}}$ has coordinates $x_{i_{1},
\ldots , i_{r}}$ for each $1\leq i_{1}<\cdots <i_{r}\leq n$. To give
polynomials defining the image of the Pl\"{u}cker embedding as an
algebraic variety, it is convenient to define $x_{i_{1}, \ldots , i
_{r}}$ for sequences in $[n]$ which are not necessarily increasing, by\vspace*{-1pt}
\begin{equation*}
x_{i_{1}, \ldots , i_{r}}=\mathrm{sign}(\sigma )x_{i_{\sigma (1)},
\ldots , i_{\sigma (r)}}
\end{equation*}
for each permutation $\sigma $ of $[r]$. (In particular, $x_{i_{1},
\ldots , i_{r}}=0$ if the values $i_{j}$ are not distinct.) With this
convention we have the following well-known descriptions of the
Pl\"{u}cker embedding of the Grassmannian. A proof is found in
Chapter~VII, Section~6, of \cite{Hodge-Pedoe}.

Throughout the following the notation $i_{1}, \ldots ,
\widehat{i_{k}}, \ldots , i_{r+1}$ means that the term $i_{k}$ is
omitted.\vspace*{-1pt}

\begin{prop}
\label{prop:GP}
Let $F$ be a field.\vspace*{-2pt}
\begin{enumerate}[\textit{2.}]%
\item[\textit{1.}]
\textup{\textbf{(The Grassmann--Pl\"{u}cker relations)}} The image of $\tilde{P}$
is exactly the set of $\vec{x}$ satisfying the following. For each
$I=\{i_{1}, \ldots , i_{r+1}\}\subseteq [n]$ and each $J=\{j_{1},
\ldots , j_{r-1}\}\subseteq [n]$,
%
\begin{equation}
\label{eqn:_GP}
\sum _{k=1}^{r+1} (-1)^{k} x_{i_{1}, \ldots , \widehat{i_{k}},
\ldots , i_{r+1}}x_{i_{k},j_{1}, \ldots , j_{r-1}
\vphantom{\widehat{i_{k}}}}=0
\end{equation}
\item[\textit{2.}]
\textup{\textbf{(Weak Grassmann--Pl\"{u}cker conditions)}} The image of
$\tilde{P}$ is exactly the set of $\vec{x}$ satisfying:\vspace*{-2pt}
\begin{enumerate}[\textit{(a)}]%
\item[\textit{(a)}]
$\{\{k_{1}, \ldots , k_{r}\}\subseteq [n]: x_{k_{1}, \ldots , k_{r}}
\neq 0\}$ is the set of bases of a matroid, and
\item[\textit{(b)}]
Equation~\eqref{eqn:_GP} holds for all $I$ and $J$ with $|I-J|=3$.\vspace*{-2pt}
\end{enumerate}
\end{enumerate}
\end{prop}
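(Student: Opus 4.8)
The plan is to establish Part~1 directly as a statement about matrices and then to reduce Part~2 to it. For \emph{Part~1, ``only if''}: writing a point of the image of $\tilde P$ as $x_{a_1,\dots,a_r}=|M_{a_1,\dots,a_r}|$ for some $M\in\mathrm{Mat}(r,n)$, and letting $c_t$ be the $t$-th column of $M$, the assertion is that for all $I=\{i_1,\dots,i_{r+1}\}$ and $J=\{j_1,\dots,j_{r-1}\}$ the alternating sum in~\eqref{eqn:_GP} vanishes \emph{identically} in the entries of $M$. Holding the columns indexed by $J$ fixed, I would regard the left-hand side as a function $\Phi(u_1,\dots,u_{r+1})$ of the $r+1$ vectors $u_m=c_{i_m}\in F^r$. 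Then $\Phi$ is multilinear, and it is alternating: if $u_a=u_b$ with $a<b$, every term with $k\ne a,b$ contains a determinant with a repeated column and so vanishes, while the $k=a$ and $k=b$ terms agree up to sign and cancel. An alternating multilinear function of $r+1$ vectors in the $r$-dimensional space $F^r$ is identically zero, so $\Phi\equiv 0$, and specializing $u_m=c_{i_m}$ yields~\eqref{eqn:_GP}. (This is the classical Sylvester--Laplace identity; conceptually it reflects $\Lambda^{r+1}V=0$ for $\dim V=r$.)

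For \emph{Part~1, ``if''}: given $\vec x\ne\vec 0$ satisfying every relation~\eqref{eqn:_GP}, permute the ground set and rescale so that $x_{1,2,\dots,r}=1$, and form the unique matrix $M=[\,I_r\mid N\,]\in\mathrm{Mat}(r,n)$ whose entry in row $s$ and column $t>r$ is $\pm\,x_{1,\dots,\widehat s,\dots,r,t}$ with the appropriate echelon sign---this is the reconstruction map already observed after the definition of $\tilde P$, and it makes $|M_B|=x_B$ whenever $|B\setminus\{1,\dots,r\}|\le 1$. I would then prove $|M_B|=x_B$ for every $r$-subset $B$ by induction on $m=|B\setminus\{1,\dots,r\}|$: for $m\ge 2$ one chooses $I$ and $J$ in~\eqref{eqn:_GP} so that the relation expresses both $|M_B|$ and $x_B$ as the \emph{same} universal linear combination of coordinates indexed by sets with smaller value of $m$---the former because the $|M_\bullet|$ are genuine Plücker coordinates and hence satisfy~\eqref{eqn:_GP} by the preceding paragraph, the latter by hypothesis. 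Hence $\vec x$ is the Plücker vector of $\mathrm{row}(M)$, and it lies in the image of $\tilde P$.

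For \emph{Part~2}, ``only if'' is immediate from Part~1: a point in the image of $\tilde P$ satisfies~\eqref{eqn:_GP} for all $I,J$, in particular for those with $|I-J|=3$, and the support of the Plücker vector of a subspace $V$ is precisely the set of bases of the matroid of $V$, so~(a) holds. For ``if'' it suffices, by Part~1, to deduce all of~\eqref{eqn:_GP} from (a) and (b). Fix $I,J$ and let $t=|I-J|$, the number of terms in the sum not forced to vanish by index repetition; one always has $t\ge 2$, the case $t=2$ is a two-term identity holding by the sign conventions on the $x$'s, and $t=3$ is precisely~(b). For $t\ge 4$ I would use downward induction on $t$: ``split'' the $t$-term relation into a combination of relations with strictly smaller $|I-J|$ by a substitution that rewrites one of the coordinates via a three-term relation. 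Carrying out the algebra requires dividing by a Plücker coordinate $x_B\ne0$, i.e.\ by a basis of the matroid $\mathcal B$ of~(a), and the basis-exchange axioms are exactly what guarantee a usable $B$ ``adjacent'' to the index sets involved; the degenerate configurations, where the would-be pivot coordinates all vanish, are dispatched by observing that enough terms of the relation are then already zero, again using that $\mathcal B$ is a matroid. With all of~\eqref{eqn:_GP} in hand, Part~1 finishes.

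The crux is the ``if'' direction of Part~2: propagating the three-term relations to the full family~\eqref{eqn:_GP} is an honestly combinatorial argument, and the difficulty is bookkeeping---choosing, via the exchange axioms for $\mathcal B$, the correct chain of nonzero pivot coordinates so that every substitution is legitimate, and then separately disposing of the vanishing configurations. This is precisely why hypothesis~(a) cannot be dropped. By contrast Part~1 is routine: ``only if'' is a determinantal identity, and ``if'' is the standard recovery of a subspace from its normalized Plücker vector.
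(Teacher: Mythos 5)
The paper offers no proof of this proposition---it calls the two descriptions ``well-known'' and refers the reader to Chapter~VII, Section~6 of Hodge and Pedoe---so there is no in-text argument to compare yours against. Judged on its own merits, your Part~1 is complete and is the classical argument in both directions: for ``only if'' you view the left side of \eqref{eqn:_GP} as an alternating $(r+1)$-multilinear form in the columns of $M$ indexed by $I$ (with the $J$-columns frozen) and use $\Lambda^{r+1}(F^r)=0$; for ``if'' you normalize $x_{1,\dots,r}=1$, build $M=[I_r\mid N]$ from the coordinates $x_B$ with $|B\setminus[r]|\le 1$, and propagate $|M_B|=x_B$ by induction on $m=|B\setminus[r]|$ using the instance of \eqref{eqn:_GP} with $I=[r]\cup\{b\}$ and $J=B\setminus\{b\}$. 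That induction step is sound (each of the other six index sets appearing has strictly smaller $m$, and the genuine Pl\"ucker coordinates of $M$ satisfy the same relation by the ``only if'' direction). Part~2 ``only if'' is then immediate.

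Part~2 ``if'' is where the substance is, and your sketch has a genuine gap, not mere bookkeeping. First, the reduction mechanism is underspecified: substituting a three-term identity into the $t$-term sum does not visibly produce relations with strictly smaller $|I-J|$, so what quantity your induction decreases is unclear; the fact that the three-term relations generate the full Pl\"ucker ideal (even set-theoretically over a non-closed field) is precisely the content to be proved, not a manipulation. Second, and more seriously, any version of this argument must establish not only $|M_B|=x_B$ on bases of $\mathcal B$ but also $|M_B|=0$ when $B$ is dependent in $\mathcal B$---that is, that the reconstructed matrix realizes the matroid $\mathcal B$ and not a proper coarsening of it. Your pivot step handles bases: for $B$ a basis with $m\ge 2$, two successive symmetric exchanges against $[r]$ produce $b_1,b_2\in B\setminus[r]$ and $a_1,a_2\in[r]\setminus B$ with $B''=B\setminus\{b_1,b_2\}\cup\{a_1,a_2\}$ a basis, and the three-term relation on the context $B\setminus\{b_1,b_2\}$ with free elements $\{b_1,b_2,a_1,a_2\}$ then solves for $x_B$ against the pivot $x_{B''}\ne0$. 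But when $B$ is dependent---say of rank less than $r-2$ in $\mathcal B$---every candidate $B''$ is also dependent, every pivot vanishes, and the relation reads $|M_B|\cdot 0=0$, which says nothing. Your sentence ``dispatched by observing that enough terms of the relation are then already zero'' does not close this case; recovering $|M_B|=0$ there requires a genuinely matroid-theoretic argument (for instance extracting a circuit $C\subseteq B$ of $\mathcal B$ and exhibiting the corresponding linear dependence among the columns of $M$), which is essentially the weak-equals-strong theorem of Baker--Bowler specialized to fields. Since the paper deliberately outsources the whole proposition to Hodge--Pedoe, delegating Part~2 to a reference would be entirely consistent with the paper's treatment; but as written the dependent-set case is a hole.
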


The second weak Grassmann--Pl\"{u}cker condition is known as the
\emph{3-term Grassmann--Pl\"{u}cker relations}.\vspace*{-1pt}
%
\begin{example}
Let $V$ be a 2-dimensional subspace of a 4-dimensional vector space
$W$. Let $\{v_{1},v_{2}\}$ be a basis for $V$ and $\{e_{1},e_{2},e
_{3},e_{4}\}$ be a basis for $W$. Let\vspace*{-1pt}
%
\begin{align}
\label{expand}
v_{1} \wedge v_{2} = \ x_{12}\ e_{1} \wedge e_{2} + x_{13}\ e_{1}
\wedge e_{3} + x_{14}\ e_{1} \wedge e_{4}
\nonumber
\\[-1pt]
+x_{34}\ e_{3} \wedge e_{4} + x_{24}\ e_{2} \wedge e_{4} + x_{23}\ e
_{2} \wedge e_{3}
\end{align}
Then since $v_{1} \wedge v_{2} \wedge v_{1} \wedge v_{2} =0$, the
coordinates satisfy the homogenous quadratic equation\vspace*{-1pt}
%
\begin{equation}
\label{GP}
x_{12}x_{34} - x_{13}x_{24} + x_{14}x_{23} = 0.
\end{equation}
Conversely, given a nonzero solution to \eqref{GP}, there are vectors
$v_{1}$ and $v_{2}$ satisfying \eqref{expand}. One can reason as
follows. One of the $x_{ij}$ is nonzero, so without loss of generality
we may assume $x_{12} = 1$. If $v_{1} = e_{1} - x_{23}e_{3} - x_{34}e
_{4}$ and $v_{2} = e_{2} + x_{13}e_{3} + x_{24}e_{4}$, then \eqref{expand} is satisfied.

Thus for a field $K$ the image of the Pl\"{u}cker embedding of
$\operatorname{Gr}(2, K^{4})$ is the projective variety given by the
3-term Grassmann--Pl\"{u}cker relations with $I=\{2,3,4\}$ and
$J=\{1\}$.\vspace*{-2pt}
\end{example}

When the field in Proposition~\ref{prop:GP} is replaced by a hyperfield,
things go haywire. There are $\vec{x}$ satisfying the hyperfield analog
of the weak Grassmann--Pl\"{u}cker conditions, but which do not satisfy
the hyperfield analog to the general Grassmann--Pl\"{u}cker relations.
The 3-term Grassmann--Pl\"{u}cker relations lead to the notion of a
\emph{weak $F$-matroid}, while the general Grassmann--Pl\"{u}cker
relations lead to the notion of a \emph{strong $F$-matroid}.\vspace*{-2pt}

\subsection{Matroids over hyperfields}%
\label{sec:defn}

\begin{definition}
\cite{Baker-Bowler} Let $F$ be a hyperfield. Let $E$ be a finite
set. A \emph{Grassmann--Pl\"{u}cker function of rank $r$ on $E$ with
coefficients in $F$} is a function $\varphi : E^{r} \to F$ such that\vspace*{-2pt}
\begin{itemize}%
\item
$\varphi $ is not identically zero.
\item
$\varphi $ is alternating.
\item
(Grassmann--Pl\"{u}cker Relations) For any $(i_{1}, \dots , i_{r+1} )
\in E^{r+1}$ and $(j_{1}, \dots , j_{r-1} ) \in E^{r-1}$,\vspace*{-7pt}
\begin{equation*}
0 \in \bighplus _{k=1}^{r+1} (-1)^{k} \varphi (i_{1}, \ldots ,
\widehat{i_{k}}, \ldots , i_{r+1})\odot \varphi ({i_{k},j_{1}, \ldots
, j_{r-1}})
\end{equation*}
\end{itemize}
\end{definition}

Two functions $\varphi _{1}, \varphi _{2} : E^{r} \to F$ are
\emph{projectively equivalent} if there exists $\alpha \in F^{\times }$
such that $\varphi _{1} = \alpha \odot \varphi _{2}$.\vspace*{-2pt}

\begin{example}
Let $E$ be a finite subset of a vector space over a field $K$ with
\mbox{$\dim _{K} \operatorname{Span}E = r$}. Then a Grassmann--Pl\"{u}cker
function of rank $r$ on $E$ with coefficients in the Krasner hyperfield
$\mathbb{K}$ is given by defining $\varphi (i_{1}, \dots , i_{r})$ to
be zero if $\{i_{1}, \dots , i_{r}\}$ is linearly dependent and one if
$\{i_{1}, \dots , i_{r}\}$ is linearly independent. The projective
equivalence class determines a rank $r$ matroid for which $E$ is a
representation over $K$. (Because $|\mathbb{K}^{\times }|=1$, each
projective equivalence class has only one element.)\vspace*{-2pt}
\end{example}

\begin{definition}
\cite{Baker-Bowler} A strong \emph{$F$-matroid of rank $r$ on
$E$} is the projective equivalence class of a Grassmann--Pl\"{u}cker
function of rank $r$ on $E$ with coefficients in $F$.\vspace*{-2pt}
\end{definition}

\begin{definition}
\cite{Baker-Bowler} Let $F$ be a hyperfield. Let $E$ be a finite
set. A \emph{weak Grassmann--Pl\"{u}cker function of rank $r$ on $E$ with
coefficients in $F$} is a function $\varphi : E^{r} \to F$ such that\vspace*{-2pt}
\begin{itemize}%
\item
$\varphi $ is not identically zero.
\item
$\varphi $ is alternating
\item
The sets $\{i_{1}, \dots , i_{r}\}$ for which $\varphi (i_{1}, \dots
i_{r}) \neq 0$ form the set of bases of a ordinary matroid.
Equivalently, if $\kappa : F \to \mathbb{K}$ is the unique hyperfield
homomorphism to the Krasner hyperfield, then $\kappa \circ \varphi $ is
a Grassmann--Pl\"{u}cker function of rank $r$ on $E$ with coefficients
in $\mathbb{K}$.
\item
(3-term Grassmann--Pl\"{u}cker Relations) For any $I= (i_{1}, \dots , i
_{r+1} ) \in E^{r+1}$ and $J = (j_{1}, \dots , j_{r} ) \in E^{r-1}$ for
which $|I-J| = 3$,\vspace*{-1pt}
\begin{equation*}
0 \in \bighplus _{k=1}^{r+1} (-1)^{k} \varphi (i_{1}, \ldots ,
\widehat{i_{k}}, \ldots , i_{r+1})\odot \varphi ({i_{k},j_{1}, \ldots
, j_{r-1}})
\end{equation*}
\end{itemize}
\end{definition}

\begin{definition}
\cite{Baker-Bowler} A weak \emph{$F$-matroid of rank $r$ on $E$}
is the projective equivalence class of a weak Grassmann--Pl\"{u}cker
function of rank $r$ on $E$ with coefficients in $F$.
\end{definition}

Note that a strong $F$-matroid is a weak $F$-matroid. Baker and Bowler
show that if a hyperfield satisfies the doubly distributive property,
then weak and strong $F$-matroids coincide. The hyperfields in
Diagram~\eqref{3-by-3} which are doubly distributive are $\mathbb{R}$
and $\mathbb{C}$ (because they are fields), $\mathbb{S}$ and
$\mathbb{K}$ (Section~4.5 in~\cite{Viro}), $\mathcal{T }
\mathbb{R}$ (Section~7.2 in~\cite{Viro}), and $\mathcal{T }
\triangle $ (Section~5.2 in~\cite{Viro}). For each of the
remaining hyperfields, strong matroids and weak matroids do not
coincide:
\begin{itemize}%
\item
Example 3.30 in~\cite{Baker-Bowler} is a weak $\triangle $-matroid
which is not strong.
\item
Example 3.31 in~\cite{Baker-Bowler}, which is due to Weissauer,
is a function $\varphi $ from 3-tuples from a 6-element set to
$S^{1}\cup \{0\}$. Viewing $S^{1}\cup \{0\}$ as contained in the
underlying set of a hyperfield $F\in \{\mathbb{P},\Phi ,\mathcal{T }
\mathbb{C}\}$, this $\varphi $ is the Grassmann--Pl\"{u}cker function of
a weak $F$-matroid which is not strong.
\end{itemize}

When dealing with hyperfields for which weak and strong matroids
coincide, we will leave out the adjectives ``weak'' and ``strong.''

A $\mathbb{K}$-matroid is a matroid and an $\mathbb{S}$-matroid is an
oriented matroid. As we have seen, when $F$ is a field, an $F$-matroid
is the image of a subspace of $F^{E}$ under the Pl\"{u}cker embedding.

It is also possible to interpret strong $F$-matroids as generalizations
of linear subspaces in a more direct way. Associated to a strong
$F$-matroid of rank $r$ on $E$ is a set ${\mathcal V}^{*}\subseteq F^{E}$,
called the set of \emph{$F$-covectors} of the $F$-matroid. If $F$ is a
field, and thus an $F$-matroid is the Pl\"{u}cker embedding of a
subspace $V$ of $F^{E}$, then the $F$-covectors of the $F$-matroid are
exactly the elements of $V$. See~\cite{Anderson:covectors} for
details.

\section{Hyperfield Grassmannians}

\begin{definition}
The \emph{strong Grassmannian} $\operatorname{Gr}^{s}(r,F^{E})$ is the
set of strong $F$-matroids of rank $r$ on $E$. The \emph{weak
Grassmannian $\operatorname{Gr}^{w}(r,F^{E})$} is the set of weak
$F$-matroids of rank $r$ on $E$. We use the notation $
\operatorname{Gr}^{*}(r,F^{n})$, where $* \in \{s,w\}$.
\end{definition}

\begin{remark}
$\operatorname{Gr}^{s}(r,F^{E})$ is called the \emph{$F$-Grassmannian}
in~\cite{Baker-Bowler}.
\end{remark}

\begin{remark}
We abbreviate $\operatorname{Gr}^{*}(r,F^{\{1, \dots , n\}})$ by
$\operatorname{Gr}^{*}(r,F^{n})$. If $E$ has cardinality $n$, then
introducing a total order on $E$ gives a bijection $\operatorname{Gr}
^{*}(r,F^{E}) \cong \operatorname{Gr}^{*}(r,F^{n})$.
\end{remark}

Note that
\begin{equation*}
\operatorname{Gr}^{s}(r, F^{n})\subset \operatorname{Gr}^{w}(r,F^{n})
\subset \mathbf{P}(F^{n^{r}})
\end{equation*}

If $F$ is a topological hyperfield, then each of the sets above inherits
a topology. For a topological hyperfield there is a stabilization
embedding
\begin{align*}
\operatorname{Gr}^{*}(r,F^{n})
& \hookrightarrow \operatorname{Gr}
^{*}(r,F^{n+1})
\\
[\varphi : \{1, \dots , n\}^{r} \to F]
& \mapsto [{\widehat{\varphi }}:
\{1, \dots , n+1\}^{r} \to F]
\end{align*}
given by considering $\{1, \dots , n\}^{\{1, \dots , r\}}\subset \{1,
\dots , n+1\}^{\{1, \dots , r\}}$ and defining ${\widehat{\varphi }}$
to be equal to $\varphi $ on the subset and zero on the complement. We
then define $\operatorname{Gr}^{*}(r,F^{\infty })$ as the colimit of
$\operatorname{Gr}^{*}(r,F^{n})$ as $n \to \infty $. In other words,
$\operatorname{Gr}^{*}(r,F^{\infty })$ is the union of $
\operatorname{Gr}^{*}(r,F^{n})$, and a subset of $\operatorname{Gr}
^{*}(r,F^{\infty })$ is open if and only if its intersection with
$\operatorname{Gr}^{*}(r,F^{n})$ is open for all~$n$.

A continuous homomorphism $h : F \to F'$ of topological hyperfields
induces a continuous map
\begin{align*}
\operatorname{Gr}^{*}(h) : \operatorname{Gr}^{*}(r,F^{n})
&\to
\operatorname{Gr}^{*}(r,F^{\prime \,n})
\\
[\varphi ]
& \mapsto [h \circ \varphi ]
\end{align*}

The following theorem follows from the definitions, but is nonetheless
powerful.

\begin{theorem}
\label{thm:grassmann}
Let $H : F \times I \to F'$ be a hyperfield homotopy. Define
\begin{equation*}
\operatorname{Gr}^{*}(H) : \operatorname{Gr}^{*}(r,F^{n}) \times I
\to \operatorname{Gr}^{*}(r,F^{\prime \,n})
\end{equation*}
by $\operatorname{Gr}^{*}(H)([\varphi ],t) = \operatorname{Gr}^{*}(H
_{t})([\varphi ])$. Then $\operatorname{Gr}^{*}(H)$ is continuous.
\end{theorem}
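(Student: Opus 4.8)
The plan is to reduce continuity of $\operatorname{Gr}^{*}(H)$ to continuity of the corresponding map on the level of Plücker coordinate vectors, and then to the already-given continuity of $H$ itself. First I would set up the commutative square relating $\operatorname{Gr}^{*}(H)$ to the map on function spaces: a Grassmann--Plücker function $\varphi : E^{r}\to F$ is a point of $F^{E^{r}}$, and postcomposition with $H_{t}$ gives a map
\begin{equation*}
\wt H : F^{E^{r}}\times I \to F'^{\,E^{r}}, \qquad \wt H(\varphi, t) = H_{t}\circ\varphi,
\end{equation*}
i.e.\ the coordinate $(\wt H(\varphi,t))_{\mathbf i} = H(\varphi(\mathbf i), t)$ for each $\mathbf i\in E^{r}$. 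Since each coordinate of $\wt H$ is the composite of the continuous projection $F^{E^{r}}\times I \to F\times I$ followed by $H$, the map $\wt H$ is continuous into the product space $F'^{\,E^{r}}$. (Here $E^{r}$ is finite, so the product topology is well-behaved and coordinatewise continuity suffices.)

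Next I would pass to projective space. Let $\pi : F^{E^{r}}-\{0\}\to \mathbf P(F^{E^{r}})$ and $\pi' : F'^{\,E^{r}}-\{0\}\to \mathbf P(F'^{\,E^{r}})$ be the quotient maps. Restricting $\wt H$ to $(F^{E^{r}}-\{0\})\times I$, its image lands in $F'^{\,E^{r}}-\{0\}$: indeed $H_{t}$ is a hyperfield homomorphism, hence injective on nonzero elements in each of the relevant cases --- more carefully, $H_{t}(x)=x|x|^{-t}\neq 0$ for $x\neq 0$ in all of $\triangle_{0},\mathcal T\mathbb R_{0},\mathcal T\mathbb C_{0},\mathcal T\triangle_{0}$, and for the general statement one argues from the fact that a Grassmann--Plücker function composed with a hyperfield homomorphism is again not identically zero (this is built into the definition of $\operatorname{Gr}^{*}(h)$ being well-defined, which the excerpt already asserts). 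Thus $\pi'\circ\wt H : (F^{E^{r}}-\{0\})\times I \to \mathbf P(F'^{\,E^{r}})$ is continuous, and it is constant on $\pi$-fibers in the first variable, since $H_{t}(\alpha\odot\varphi) = H_{t}(\alpha)\odot H_{t}(\varphi)$. Because $\pi\times\id_{I}$ is a quotient map (the product of a quotient map with the identity on a locally compact space $I$ is a quotient map), the induced map
\begin{equation*}
\mathbf P(F^{E^{r}})\times I \to \mathbf P(F'^{\,E^{r}})
\end{equation*}
is continuous.

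Finally I would restrict this induced map to the subspace $\operatorname{Gr}^{*}(r,F^{n})\subseteq \mathbf P(F^{n^{r}})$. For each fixed $t$, the map $[\varphi]\mapsto [H_{t}\circ\varphi]$ is exactly $\operatorname{Gr}^{*}(H_{t})$, which by the discussion preceding the theorem lands in $\operatorname{Gr}^{*}(r,F'^{\,n})$ (the point being that $H_{t}$ is a hyperfield homomorphism, so it carries strong resp.\ weak Grassmann--Plücker functions to strong resp.\ weak ones). Hence the restriction of the continuous map above to $\operatorname{Gr}^{*}(r,F^{n})\times I$ has image in $\operatorname{Gr}^{*}(r,F'^{\,n})$, and a continuous map into a space followed by the subspace inclusion has continuous corestriction; this corestriction is $\operatorname{Gr}^{*}(H)$, which is what we wanted.

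The only genuinely delicate point is the claim that $\pi\times\id_{I}$ is a quotient map, which is what lets us transport continuity across the projectivization. This is a standard fact (Whitehead's theorem on quotient maps: if $q: X\to Y$ is a quotient map and $Z$ is locally compact Hausdorff, then $q\times\id_{Z}: X\times Z\to Y\times Z$ is a quotient map), and $I=[0,1]$ is certainly locally compact Hausdorff, so it applies. Everything else is a routine chase through product, subspace, and quotient topologies, using only that $H$ is continuous and that each $H_{t}$ is a hyperfield homomorphism --- both of which are hypotheses. I would remark that one does \emph{not} need continuity of hyperaddition anywhere, consistent with the paper's standing convention.
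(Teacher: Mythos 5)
Your proof is correct, and since the paper declines to give a proof (stating only that the theorem ``follows from the definitions''), your argument is the natural detailed unpacking: work coordinatewise on raw Grassmann--Pl\"ucker functions, descend through the projectivization, then corestrict to the Grassmannian. Two small remarks. First, the passing phrase that $H_t$ is ``injective on nonzero elements'' is not a property of hyperfield homomorphisms in general (e.g.\ $\operatorname{ph}:\mathbb R\to\mathbb S$); what you actually need, and do supply a moment later, is only that any hyperfield homomorphism sends $F^{\times}$ into $F'^{\times}$ (from $h(1)=1$ and multiplicativity), so $H_t\circ\varphi$ is again not identically zero. Second, you appeal to Whitehead's theorem (product of a quotient map with the identity of a locally compact space is a quotient map) to get that $\pi\times\id_I$ is a quotient map; a more internal route is available from Proposition~\ref{open_map}, which shows that $\pi\colon F^{E^r}-\{0\}\to\mathbf P(F^{E^r})$ is an \emph{open} map, so $\pi\times\id_I$ is an open surjection and hence a quotient map with no hypothesis on $I$ required. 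Either way the argument goes through, and your closing observation that continuity of hyperaddition is never used is exactly right and consistent with the paper's standing convention.
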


If $F$ and $F'$ are homotopy equivalent, then so are the topological
spaces $\operatorname{Gr}^{*}(r,F^{n})$ and
$\operatorname{Gr}^{*}(r,F^{\prime \,n})$.

From Proposition~\ref{prop:4equivs} we see:
%
\begin{corollary}
\label{cor:grassmannhomotopy}
1. $\operatorname{Gr}(r,\mathbb{S}^{n}) \simeq \operatorname{Gr}(r,
\mathcal{T }\mathbb{R}_{0}^{n})$

2. $\operatorname{Gr}^{*}(r, \Phi ^{n})\simeq \operatorname{Gr}^{*}(r,
\mathcal{T }\mathbb{C}_{0}^{n})$ for $*\in \{s,w\}$

3. $\operatorname{Gr}(r, \mathbb{K}^{n})\simeq \operatorname{Gr}^{*}(r,
\triangle _{0}^{n})$ for $*\in \{s,w\}$

4. $\operatorname{Gr}(r, \mathbb{K}^{n})\simeq \operatorname{Gr}(r,
\mathcal{T }\triangle _{0}^{n})$
\end{corollary}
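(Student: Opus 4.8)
The plan is to deduce all four statements from Corollary~\ref{prop:4equivs} together with Theorem~\ref{thm:grassmann} and the functoriality of $\operatorname{Gr}^{*}(r,-^{n})$, reading off separately where double distributivity lets us drop the superscript $*$. The only real content is the principle, already announced after Theorem~\ref{thm:grassmann}, that homotopy equivalent topological hyperfields have homotopy equivalent Grassmannians; I would state and prove this first, then plug in the four inclusions of Corollary~\ref{prop:4equivs}.

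First I would record: if $h\colon F\to F'$ is a homotopy equivalence of topological hyperfields with homotopy inverse $g$ and hyperfield homotopies $G\colon F\times I\to F$, $H\colon F'\times I\to F'$ satisfying $G_{0}=\operatorname{id}$, $G_{1}=g\circ h$, $H_{0}=\operatorname{id}$, $H_{1}=h\circ g$, then $\operatorname{Gr}^{*}(h)$ and $\operatorname{Gr}^{*}(g)$ are homotopy inverses. Indeed $\operatorname{Gr}^{*}(r,-^{n})$ is a functor on topological hyperfields and continuous homomorphisms, so $\operatorname{Gr}^{*}(g)\circ\operatorname{Gr}^{*}(h)=\operatorname{Gr}^{*}(g\circ h)=\operatorname{Gr}^{*}(G_{1})$, while $\operatorname{Gr}^{*}(G)$ of Theorem~\ref{thm:grassmann} is a continuous homotopy from $\operatorname{Gr}^{*}(G_{0})=\operatorname{id}$ to $\operatorname{Gr}^{*}(G_{1})$; the other composite is handled symmetrically with $H$. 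Hence $\operatorname{Gr}^{*}(r,F^{n})\simeq\operatorname{Gr}^{*}(r,F'^{n})$.

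Next I would apply this to the four inclusions of Corollary~\ref{prop:4equivs}, each a homotopy equivalence of topological hyperfields with homotopy inverse $\operatorname{ph}$: here $G$ is the trivial homotopy since $\operatorname{ph}\circ\text{inc}=\operatorname{id}$, and $H$ is the hyperfield homotopy of Proposition~\ref{prop:homotopies} on the relevant $0$-coarse hyperfield, running from $\operatorname{id}$ to $\text{inc}\circ\operatorname{ph}$. This yields $\operatorname{Gr}^{*}(r,\mathbb{K}^{n})\simeq\operatorname{Gr}^{*}(r,\triangle_{0}^{n})$, $\operatorname{Gr}^{*}(r,\mathbb{S}^{n})\simeq\operatorname{Gr}^{*}(r,\mathcal{T}\mathbb{R}_{0}^{n})$, $\operatorname{Gr}^{*}(r,\Phi^{n})\simeq\operatorname{Gr}^{*}(r,\mathcal{T}\mathbb{C}_{0}^{n})$, and $\operatorname{Gr}^{*}(r,\mathbb{K}^{n})\simeq\operatorname{Gr}^{*}(r,\mathcal{T}\triangle_{0}^{n})$ for each $*\in\{s,w\}$. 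Finally I would invoke double distributivity: $\mathbb{K}$, $\mathbb{S}$, $\mathcal{T}\mathbb{R}$, and $\mathcal{T}\triangle$ are doubly distributive, so for those hyperfields $\operatorname{Gr}^{s}=\operatorname{Gr}^{w}=\operatorname{Gr}$ and the superscript disappears. This gives statements~1 and~4 verbatim; statement~3 because $\operatorname{Gr}(r,\mathbb{K}^{n})$ is unambiguous while $\triangle$ retains its $*$; and statement~2 needs no simplification since neither $\Phi$ nor $\mathcal{T}\mathbb{C}$ is doubly distributive.

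There is essentially no obstacle here; the one point deserving a word is that $\operatorname{Gr}^{*}(h)$ is well defined for both $*=s$ and $*=w$, i.e.\ a hyperfield homomorphism carries weak (resp.\ strong) Grassmann--Pl\"ucker functions to weak (resp.\ strong) ones. This holds because $h(x\boxplus y)\subseteq h(x)\boxplus h(y)$ forces the relations $0\in\bighplus(\cdots)$ to be preserved, and the underlying matroid is unchanged since the unique homomorphism to $\mathbb{K}$ factors through $h$; but this is already incorporated into the construction of $\operatorname{Gr}^{*}(h)$ recorded just before Theorem~\ref{thm:grassmann}, so no new verification is needed.
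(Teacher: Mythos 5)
Your proposal is correct and follows the same route as the paper: apply Theorem~\ref{thm:grassmann} to the four homotopy equivalences of topological hyperfields recorded in Corollary~\ref{prop:4equivs}, then use double distributivity of $\mathbb{K}$, $\mathbb{S}$, $\mathcal{T}\mathbb{R}$, and $\mathcal{T}\triangle$ to drop the superscript $*$ where appropriate. The paper leaves implicit the functoriality argument showing that a homotopy equivalence of topological hyperfields induces a homotopy equivalence of Grassmannians (it just states the principle right after Theorem~\ref{thm:grassmann}); you usefully spell this out, but the underlying argument is the same.
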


Part 1 of this corollary gives us the homotopy equivalence referred to
in Part 2 of Theorem~\ref{cohomology}. As will be discussed in
Section~\ref{sec:posets}, $\operatorname{Gr}(r, \mathbb{K}^{n})$ is
contractible, and $\operatorname{Gr}(r,\mathbb{S}^{n})$ is the
\emph{MacPhersonian} $\mathrm{MacP}(r,n)$.

\section{Realization spaces}

For any morphism of hyperfields $f: F\to F'$ and $*\in \{s,w\}$, we get
a partition of $\operatorname{Gr}^{*}(r, F^{n})$ into preimages under
$\operatorname{Gr}^{*}(f)$. The preimage of $M\in \operatorname{Gr}
^{*}(r, F^{\prime \,n})$ will be denoted $\operatorname{Real}_{F}^{*}(M)$, and
if $F$ is a topological hyperfield then $\operatorname{Real}_{F}^{*}(M)$
is the (strong or weak) \emph{realization space} of $M$ over $F$. An
element of $\operatorname{Real}_{F}^{*}(M)$ is called a (strong or weak)
\emph{realization} of $M$ over $F$. An $F'$-matroid is (strong or weak)
\emph{realizable} over $F$ if it has a (strong or weak) realization over
$F$. (This is a rephrasing of Definition 4.9
in~\cite{Baker-Bowler}.) When dealing with hyperfields for which
weak and strong coincide, we will leave out the adjectives ``weak'' and
``strong'' and write simply $\operatorname{Real}_{F}(M)$.

\begin{example}
Recall that a $\mathbb{K}$-matroid is simply called a matroid. For any
hyperfield $F$, there is a unique morphism $\kappa : F\to \mathbb{K}$.
We call the resulting partition of $\operatorname{Gr}^{*}(r, F^{n})$ the
\emph{matroid partition}. The realization space $\operatorname{Real}
_{F}^{*}(M)$ is exactly the set of (strong or weak) rank $r$
$F$-matroids on $[n]$ whose Grassmann--Pl\"{u}cker functions are nonzero
exactly on the ordered bases of the matroid $M$.
\end{example}

\begin{example}
If $F=\mathbb{S}$ and $M$ is a matroid then $\operatorname{Real}_{F}(M)$
is the set of orientations of $M$; this has been much-studied
(cf.~Section~7.9 in~\cite{BLSWZ}).
\end{example}

\begin{example}
Let $M$ be a matroid and let $\mathbb{Y}$ be the tropical hyperfield,
i.e., the hyperfield on elements $\mathbb{R}\cup \{-\infty \}$ with
operations induced by the bijection $\log :\mathcal{T }\triangle
\to \mathbb{R}\cup \{-\infty \}$. Then $\operatorname{Real}_{
\mathbb{Y}}(M)$ is essentially the \emph{Dressian} $\mathrm{Dr}_{M}$
discussed in~\cite{Maclagan-Sturmfels}. More precisely, assume
$M$ is a matroid $\varphi : E^{r}\to \mathbb{K}$ and ${\mathcal B}=\{\{i
_{1}, \ldots i_{r}\}:\varphi (i_{1}, \ldots , i_{r})=1\}$. (In standard
matroid language, $\mathcal B$ is the set of bases of $M$.) Then we have an
embedding
%
\begin{align}
\operatorname{Real}_{\mathbb{Y}}(M)
&\to \mathbb{R}^{\mathcal B}/
\mathbb{R}(1,\ldots , 1)\\
[\hat{\varphi }]&\to [(
\hat{\varphi }(i_{1}, \ldots , i_{r}): \{i_{1}, \ldots , i_{r}\}
\in \mathcal B)]
\end{align}
\begin{align}
	\real_{\Y}(M)&\to \R^{\cal B}/\R(1,\ldots, 1)\\
	[\hat\varphi]&\to [(\hat\varphi(i_1, \ldots, i_r): \{i_1, \ldots, i_r\}\in\cal B)]
	\end{align}
whose image is $\mathrm{Dr}_{M}$.

In particular, if $M$ is the uniform rank $r$ matroid on $n$ elements
then $\operatorname{Real}_{\mathbb{Y}}(M)={\mathrm{D}r}(r,n)$. (Recall
that the \emph{uniform} matroid of rank $r$ on elements $E$ is the
matroid in which each $r$-element subset of $E$ is a basis, or
equivalently, the $\mathbb{K}$-matroid whose Grassmann--Pl\"{u}cker
function $\varphi : E^{r}\to \mathbb{K}$ takes every $r$-tuple of
distinct elements to 1.)
\end{example}

Realization spaces of matroids and oriented matroids over fields is a
rich subject (cf.~Ch.~6 in~\cite{Oxley},
\cite{ZieglerOMtoday}, \cite{Ruiz}). Even determining whether a
given matroid or oriented matroid is realizable over a particular field
is a nontrivial task. Most matroids and most oriented matroids are not
realizable over $\mathbb{R}$ (Corollary 7.4.3 in~\cite{BLSWZ}),
and the realization space of an oriented matroid over $\mathbb{R}$ (or
a phased matroid over $\mathbb{C}$) can have horrendous topology
\cite{Mnev}.

\subsection{Topology of realization spaces}

\begin{lemma}
\label{lhomeo}
Let $f:F \to F'$ be a morphism of hyperfields. Let $F_{1}$ and
$F_{2}$ be topological hyperfields with underlying hyperfield $F$ such
that the identity map $F_{1}\to F_{2}$ restricts to a homeomorphism on
each preimage $f^{-1}(a)$. Let $M\in \operatorname{Gr}^{*}(r, F^{\prime \,n})$.
Then the identity maps
\begin{equation*}
\operatorname{Real}^{*}_{F_{1}}(M) \to \operatorname{Real}^{*}_{F_{2}}(M)
\end{equation*}
are homeomorphisms.
\end{lemma}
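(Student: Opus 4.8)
The plan is to show that both realization spaces are, as sets, identical subsets of the same underlying set $\mathbf{P}(F^{n^r})$ (with the underlying hyperfield $F$), so that the identity map is a well-defined bijection, and then to check that it is a homeomorphism by comparing the subspace topologies coming from $F_1$ and $F_2$. First I would observe that $\operatorname{Real}^*_{F_i}(M)$ is, by definition, the preimage $\operatorname{Gr}^*(f)^{-1}(M)$ sitting inside $\operatorname{Gr}^*(r,F_i^{\,n}) \subset \mathbf{P}(F_i^{\,n^r})$, and that the condition defining this subset (being a (weak or strong) $F$-matroid whose Grassmann--Pl\"{u}cker function composes with $f$ to give $M$) depends only on the hyperfield structure of $F$, not on the topology; hence the two realization spaces coincide as sets, and the identity map between them is a bijection.

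Next I would reduce the homeomorphism claim to a statement about the ambient projective spaces. The key point is that a Grassmann--Pl\"{u}cker function $\varphi : \{1,\dots,n\}^r \to F$ representing an element of $\operatorname{Real}^*_F(M)$ has the property that, for each $r$-tuple $\mathbf{i}$, the value $\varphi(\mathbf{i})$ lies in the single fiber $f^{-1}(a_{\mathbf i})$ where $a_{\mathbf i} = f(\varphi(\mathbf i))$ is determined by $M$ (recall $M$ records exactly on which bases the function is nonzero and, after applying $f$, what the $F'$-matroid is; in the matroid-partition case $f = \kappa$ and the fibers are $\{0\}$ and $F^\times$). Thus the set of representatives $\wt\varphi$ of elements of $\operatorname{Real}^*_F(M)$ lies inside a product $\prod_{\mathbf i} f^{-1}(a_{\mathbf i}) \subseteq F^{n^r}$, and by hypothesis the identity $F_1 \to F_2$ restricts to a homeomorphism on each factor $f^{-1}(a_{\mathbf i})$, hence the identity is a homeomorphism on this product (with the product topology). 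The subspace, product, and quotient topologies then transport this: the realization space over $F_i$ is a subspace of the quotient of (a subset of) $F_i^{\,n^r}$ by the $F^\times$-action, and since the identity is a homeomorphism on the relevant subset of $F_1^{\,n^r}$ compatibly with the $F^\times$-action, it descends to a homeomorphism on the quotients and restricts to one on the realization spaces.

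I would then spell out the descent to the quotient carefully: let $\cW_i \subseteq F_i^{\,n^r} \setminus \{0\}$ be the set of Grassmann--Pl\"{u}cker functions (of the appropriate strength) representing elements of $\operatorname{Real}^*_F(M)$. Each $\cW_i$ is contained in a union of translates, under scaling by $F^\times$, of the product $\prod_{\mathbf i} f^{-1}(a_{\mathbf i})$ for the finitely many choices of base-point data $(a_{\mathbf i})$ compatible with $M$ (there is really just one such datum up to $F^\times$-scaling once a representative is chosen, since $f$ is a homomorphism). Because scaling by a fixed $\lambda \in F^\times$ is a homeomorphism of $F_i$ (Definition~\ref{top_hyper}(2)) and the identity $F_1 \to F_2$ commutes with scaling, the identity map $\cW_1 \to \cW_2$ is a homeomorphism. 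Passing to the quotient by $F^\times$ (using Proposition~\ref{open_map} to see the quotient maps are open, so the quotient topology is well-behaved under this identification) gives that $\operatorname{Real}^*_{F_1}(M) \to \operatorname{Real}^*_{F_2}(M)$ is a homeomorphism, as claimed.

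The main obstacle, and the step I would be most careful about, is the descent from $F^{\,n^r}$ to the projective quotient $\mathbf{P}(F^{\,n^r})$: one must verify that restricting a homeomorphism of total spaces that is equivariant for the $F^\times$-action yields a homeomorphism of quotients, and in particular that the quotient topology on the realization space computed from $F_1^{\,n^r}$ agrees with the subspace-of-projective-space topology. This is where Proposition~\ref{open_map} (openness of the projection $F^n \setminus\{0\} \to F\mathbf{P}^{n-1}$) does the real work, since openness of the quotient map is what lets one check continuity of the inverse by going up to the total space. Everything else --- the set-theoretic identification of the two realization spaces, and the factorwise homeomorphism on products --- is essentially bookkeeping, given the hypothesis that the identity $F_1 \to F_2$ is a homeomorphism on each fiber $f^{-1}(a)$.
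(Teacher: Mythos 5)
The overall strategy---pass to the set of Grassmann--Pl\"{u}cker functions in $F^{n^r}\setminus\{0\}$, exploit the hypothesis that the identity is a homeomorphism on each fiber of $f$, then descend to the projective quotient---is the right one, and openness of the quotient map (Proposition~\ref{open_map}) is indeed what makes the descent work. But there is a real gap in how you use the fiberwise hypothesis. Your second paragraph asserts that the set of representatives of elements of $\operatorname{Real}^*_F(M)$ lies inside a \emph{single} product $\prod_{\mathbf{i}} f^{-1}(a_{\mathbf{i}})$; that is false. As you partly recognize in the third paragraph, it is a union of $F^\times$-translates of such a product. Concretely, fixing a basis $B$ of $M$, the map $\psi\mapsto(\psi(B),\psi(B)^{-1}\psi)$ is a homeomorphism from $\cW_i$ onto $F_i^\times\times\cW_i^{(1)}$, where $\cW_i^{(1)}=\{\varphi\in\cW_i:\varphi(B)=1\}$. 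The identity $\cW_1^{(1)}\to\cW_2^{(1)}$ \emph{is} a homeomorphism (that subset does lie in one product of fibers), but the identity $F_1^\times\to F_2^\times$ need not be: the hypothesis says the two topologies agree on each fiber $f^{-1}(a)$, and a partition into pieces on which two topologies agree does not force the topologies to agree globally. So your inference ``scaling is a homeomorphism and the identity commutes with scaling, hence the identity $\cW_1\to\cW_2$ is a homeomorphism'' is not valid. (Your remark about ``finitely many choices of base-point data'' is also off; they are indexed by $f(F^\times)$, which can be infinite.)

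The paper closes exactly this gap by normalizing: fix a basis $B$ and the unique Grassmann--Pl\"{u}cker representative $\varphi'$ of $M$ with $\varphi'(B)=1$, and set $P=\{\varphi:[\varphi]\in\operatorname{Real}^*_F(M),\ \varphi(B)=1\}$. For $\varphi\in P$ the normalization forces $f\circ\varphi=\varphi'$ on the nose (not merely up to a scalar), so $P$ genuinely sits inside the single product $\prod_{\mathbf{i}} f^{-1}(\varphi'(\mathbf{i}))$; hence the identity $P_1\to P_2$ is a homeomorphism. Then $\varphi\mapsto[\varphi]$ is a continuous bijection $P_i\to\operatorname{Real}^*_{F_i}(M)$ whose inverse $[\psi]\mapsto\psi(B)^{-1}\psi$ is continuous by Proposition~\ref{open_map} (the quotient map $F^{n^r}\setminus\{0\}\to\mathbf{P}(F^{n^r})$ is open), giving $\operatorname{Real}^*_{F_i}(M)\cong P_i$. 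The normalization is not a cosmetic simplification but the crux: it produces a section of the $F^\times$-action lying inside a single product of fibers, which is precisely what your argument lacks. With that section in hand, the descent-to-quotient bookkeeping you describe in your last two paragraphs becomes unnecessary.
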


\begin{proof}
Consider a basis $\{b_{1}, \ldots , b_{r}\}$ for $M$, i.e. a set such
that each Grassmann--Pl\"{u}cker function for $M$ is nonzero on
$(b_{1}, \ldots , b_{r})$. Fix $\varphi '$ to be the unique
Grassmann--Pl\"{u}cker function for $M$ such that $\varphi '(b_{1},
\ldots , b_{r})=1$. The map $\varphi \to [\varphi ]$ from $P=\{\varphi
\in F^{[n]^{r}}: [\varphi ]\in \operatorname{Real}_{F}(M)\mbox{ and }
\varphi (b_{1}, \ldots , b_{r})=1\}$ to $\operatorname{Real}_{F}(M)$ is
a homeomorphism, by Proposition~\ref{open_map}. But $P$ is a subset of
the Cartesian product
\begin{equation*}
\prod _{(c_{1}, \ldots , c_{r})\in [n]^{r}}f^{-1}(\varphi '(c_{1},
\ldots ,c_{r}))
\end{equation*}
whose topology is the same in $F_{1}$ and $F_{2}$.
\end{proof}

\begin{corollary}
\label{chomeo}
Let $f:F \to F'$ be a morphism of hyperfields, let $M$ be an
$F'$-matroid, and let $F$ be a topological hyperfield. Then the identity
maps
\begin{equation*}
\operatorname{Real}^{*}_{{_{0}F}}(M) \to \operatorname{Real}^{*}_{F}(M)
\to \operatorname{Real}^{*}_{F_{0}}(M)
\end{equation*}
are homeomorphisms.
\end{corollary}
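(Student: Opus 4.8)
The plan is to deduce Corollary \ref{chomeo} as a direct application of Lemma \ref{lhomeo}. Recall that the three topologies ${_0T}$, $T$, and $T_0$ on $F$ all differ only in the neighborhoods of $0$: in each case the subspace topology on $F^{\times}$ is the same, and the identity maps ${_0F}\to F\to F_0$ are continuous. The morphism $f:F\to F'$ here is $\kappa:F\to\mathbb{K}$ only in the motivating example, but for a general $F'$-matroid $M$ we still have a morphism $f$, and the key observation is that the preimages $f^{-1}(a)$ split into two types: $f^{-1}(0)$ (which is a single point, namely $\{0\}$, since $f(x)=0\iff x=0$ by the hyperfield axioms—$f(x)$ invertible whenever $x$ is) and $f^{-1}(a)$ for $a\in F'^{\times}$ (which is contained in $F^{\times}$).

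First I would verify the hypothesis of Lemma \ref{lhomeo} for the pair $({_0F},F)$ and then for the pair $(F,F_0)$. On each preimage $f^{-1}(a)$ with $a\neq 0$, the set $f^{-1}(a)$ lies entirely inside $F^{\times}=F-\{0\}$, and since ${_0T}$, $T$, and $T_0$ all induce the same subspace topology on $F^{\times}$, the identity map restricts to a homeomorphism on $f^{-1}(a)$. On the preimage $f^{-1}(0)=\{0\}$, a one-point space, the identity map is trivially a homeomorphism. Hence in both cases the identity map restricts to a homeomorphism on every fiber of $f$, and Lemma \ref{lhomeo} applies verbatim, giving that
\begin{equation*}
\operatorname{Real}^{*}_{{_{0}F}}(M) \to \operatorname{Real}^{*}_{F}(M) \quad\text{and}\quad \operatorname{Real}^{*}_{F}(M) \to \operatorname{Real}^{*}_{F_0}(M)
\end{equation*}
are homeomorphisms, and composing gives the full chain.

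The only slightly delicate point — and the one I would state carefully — is the claim that $f^{-1}(0)=\{0\}$ for any hyperfield morphism $f$. This follows because a hyperfield morphism sends $F^{\times}$ into $F'^{\times}$: if $x\in F^{\times}$ then $f(x)\odot f(x^{-1})=f(x\odot x^{-1})=f(1)=1$, so $f(x)\neq 0$. Thus the fibers of $f$ over nonzero elements are contained in $F^{\times}$, which is exactly the regime where the three topologies agree. I do not expect any real obstacle here: the corollary is genuinely a one-line consequence of the lemma once one notes that the topologies ${_0T}$, $T$, $T_0$ are mutually identical on $F^{\times}$ and that every nonzero fiber of $f$ sits inside $F^{\times}$.
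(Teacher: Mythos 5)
Your proof is correct and follows essentially the same route as the paper: observe that every fiber $f^{-1}(a)$ is either $\{0\}$ or contained in $F^{\times}$, note that the three topologies ${_0T}$, $T$, $T_0$ induce the same subspace topology on both $F^{\times}$ and $\{0\}$, and then invoke Lemma~\ref{lhomeo}. The extra detail you give (that hyperfield morphisms send units to units, so $f^{-1}(0)=\{0\}$) is a correct justification of a step the paper states without comment.
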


\begin{proof}
For $a \in F'$, either $f^{-1}(a) = \{0\}$ or $f^{-1}(a) \subseteq F -
\{0\}$. If $F$ is a topological hyperfield, then the three topologies
${}_{0}F$, $F$, and $F_{0}$ induce the same topology on $F-\{0\}$ and
on $\{0\}$, and hence on $f^{-1}(a)$. Lemma~\ref{lhomeo} applies.
\end{proof}

\begin{theorem}
\label{thm:realization}
Let $F$ and $F'$ be topological hyperfields and let $F''$ be a
hyperfield. Let $f:F\to F''$ and $f':F'\to F''$ be hyperfield morphisms.
Let $M$ be a $F''$-matroid. If $H: F_{0} \times I \to F'_{0}$ is a
hyperfield homotopy such that $f'(H(x,t))=f(x)$ for all $x$ and $t$,
then $H$ induces a homotopy $ \operatorname{Real}_{F}^{*}(M)\times I
\to \operatorname{Real}_{F'}^{*}(M)$.
\end{theorem}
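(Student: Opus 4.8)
The plan is to reduce to the $0$-coarse topologies, apply Theorem~\ref{thm:grassmann} at the level of Grassmannians, and then restrict to the realization subspaces.

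First I would invoke Corollary~\ref{chomeo}, applied once to $f\colon F\to F''$ and once to $f'\colon F'\to F''$: the identity maps $\operatorname{Real}^{*}_{F}(M)\to \operatorname{Real}^{*}_{F_{0}}(M)$ and $\operatorname{Real}^{*}_{F'_{0}}(M)\to \operatorname{Real}^{*}_{F'}(M)$ are homeomorphisms. So it is enough to produce a continuous map $\operatorname{Real}^{*}_{F_{0}}(M)\times I\to \operatorname{Real}^{*}_{F'_{0}}(M)$ and then pre- and post-compose with these homeomorphisms.

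Next, since $H\colon F_{0}\times I\to F'_{0}$ is a hyperfield homotopy between the topological hyperfields $F_{0}$ and $F'_{0}$, Theorem~\ref{thm:grassmann} provides a continuous map
\[
\operatorname{Gr}^{*}(H)\colon \operatorname{Gr}^{*}(r,F_{0}^{n})\times I\to \operatorname{Gr}^{*}(r,(F'_{0})^{n}),\qquad ([\varphi],t)\mapsto [H_{t}\circ\varphi],
\]
and the realization spaces sit inside these Grassmannians as subspaces, being the preimages of $M$ under $\operatorname{Gr}^{*}(f)$ and $\operatorname{Gr}^{*}(f')$ respectively. I would then check that $\operatorname{Gr}^{*}(H)$ carries $\operatorname{Real}^{*}_{F_{0}}(M)\times I$ into $\operatorname{Real}^{*}_{F'_{0}}(M)$: if $[\varphi]\in \operatorname{Real}^{*}_{F_{0}}(M)$ then $[f\circ\varphi]=M$, the hypothesis $f'(H(x,t))=f(x)$ says exactly that $f'\circ H_{t}=f$, hence $f'\circ(H_{t}\circ\varphi)=f\circ\varphi$ and so $\operatorname{Gr}^{*}(f')([H_{t}\circ\varphi])=[f\circ\varphi]=M$. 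Restricting the source to $\operatorname{Real}^{*}_{F_{0}}(M)\times I$ and corestricting the target to the subspace $\operatorname{Real}^{*}_{F'_{0}}(M)$ (which contains the image) keeps the map continuous, and composing with the homeomorphisms of the first step yields the homotopy $\operatorname{Real}^{*}_{F}(M)\times I\to \operatorname{Real}^{*}_{F'}(M)$.

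I expect no serious obstacle here; the content is mostly bookkeeping. The two points that need care are (i) the role of the $0$-coarsening---Theorem~\ref{thm:grassmann} needs a hyperfield homotopy between the ambient topological hyperfields, but we are only handed $H$ on the $0$-coarse versions, and Corollary~\ref{chomeo} is what lets us pass between $\operatorname{Real}^{*}_{F}(M)$ and $\operatorname{Real}^{*}_{F_{0}}(M)$---and (ii) verifying that $\operatorname{Gr}^{*}(H)$ lands back in the realization space of the \emph{same} $F''$-matroid $M$, which is precisely where the compatibility condition $f'\circ H_{t}=f$ is used.
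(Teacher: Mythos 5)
Your proposal is correct and follows essentially the same route as the paper: apply Theorem~\ref{thm:grassmann} to $H$ at the $0$-coarse level to get a Grassmannian homotopy, observe via the compatibility $f'\circ H_{t}=f$ (phrased in the paper as a commutative triangle over $\operatorname{Gr}^{*}(r,F''^{n})$) that it restricts to the realization spaces, and then invoke Corollary~\ref{chomeo} to transfer between $\operatorname{Real}^{*}_{F}(M)$ and $\operatorname{Real}^{*}_{F_{0}}(M)$.
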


\begin{proof}
We have a commutative diagram
%
\begin{equation}
\begin{tikzcd}
\operatorname{Gr} ^*(r, F_0^n)\times I\arrow[rr,"\operatorname{Gr} ^*(H)"]
\arrow[rd]&&\operatorname{Gr} ^*(r, F_0')\arrow[ld, "\operatorname{Gr} ^*(f')"]\\
&\operatorname{Gr} ^*(r,F''^n)&
\end{tikzcd}
\end{equation}
where $\operatorname{Gr}^{*}(H)$ is the homotopy of
Theorem~\ref{thm:grassmann} and the southeast map sends each
$(M,t)$ to $\operatorname{Gr}^{*}(f)(M)$. Thus
$\operatorname{Gr}^{*}(H)$ restricts to a map $\operatorname{Real}
_{F_{0}}(M)\times I \to \operatorname{Real}_{F_{0}'}(M)$, which by
Corollary~\ref{chomeo} is our desired homotopy.
\end{proof}

Applying this to the homotopies in Proposition~\ref{prop:homotopies},
and in stark contrast to the situation with realizations over
$\mathbb{R}$ and $\mathbb{C}$, we have the following.

\begin{corollary}
1. For any matroid $M$, $\operatorname{Real}_{ \triangle }^{*}(M)$ and
$\operatorname{Real}_{\mathcal{T }\triangle }(M)$ are contractible.

2. For any oriented matroid $M$, $\operatorname{Real}_{\mathcal{T }
\mathbb{R}}(M)$ is contractible.

3. For every $M\in \operatorname{Gr}^{*}(r,\Phi ^{n})$, $
\operatorname{Real}_{\mathcal{T }\mathbb{C}}^{*}(M)$ is contractible.
\end{corollary}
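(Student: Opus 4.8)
The plan is to exhibit each realization space as contractible by a single application of Theorem~\ref{thm:realization} to one of the homotopies of Proposition~\ref{prop:homotopies}, taking $F=F'$ and $f=f'$. In each case $M$ lives over a ``phased'' hyperfield $F''$ which sits inside the ambient ``tropical'' hyperfield $F$, via a section $\iota\colon F''\hookrightarrow F$ of the standard morphism $f\colon F\to F''$ (the relevant dashed arrow in Diagram~\eqref{3-by-3top}). The key observation is that the time-$1$ map of the homotopy of Proposition~\ref{prop:homotopies} is precisely $\iota\circ f$, so that on the realization space $\operatorname{Real}^*_F(M)=\operatorname{Gr}^*(f)^{-1}(M)$ the induced time-$1$ map collapses everything to a point.

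Concretely: for part 1 take $F=\triangle$ (respectively $F=\mathcal{T}\triangle$), $F''=\mathbb{K}$, and $f=\kappa\colon F\to\mathbb{K}$; for part 2 take $F=\mathcal{T}\mathbb{R}$, $F''=\mathbb{S}$, $f=\operatorname{ph}\colon\mathcal{T}\mathbb{R}\to\mathbb{S}$; for part 3 take $F=\mathcal{T}\mathbb{C}$, $F''=\Phi$, $f=\operatorname{ph}\colon\mathcal{T}\mathbb{C}\to\Phi$. Let $H\colon F_0\times I\to F_0$ be the hyperfield homotopy of Proposition~\ref{prop:homotopies}, with $H(x,t)=x|x|^{-t}$ for $x\neq 0$. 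First I would verify the hypothesis $f(H(x,t))=f(x)$ of Theorem~\ref{thm:realization}: this is immediate, since $H(x,t)$ differs from $x$ by the positive real scalar $|x|^{-t}$, and $\kappa$ sends every nonzero element to $1$ while $\operatorname{ph}(x|x|^{-t})=\operatorname{ph}(x)$. Theorem~\ref{thm:realization} then yields a homotopy $\operatorname{Real}^*_F(M)\times I\to\operatorname{Real}^*_F(M)$ whose value at $t$ is the restriction of $\operatorname{Gr}^*(H_t)$.

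It remains to identify the endpoints. Since $H_0=\operatorname{id}$, the $t=0$ map is the identity. Since $H_1(x)=x/|x|=\operatorname{ph}(x)$ for $x\neq 0$ and $H_1(0)=0$, the map $H_1$ is exactly the composite $\iota\circ f\colon F\to F$, with $\iota$ the section $\mathbb{K}\hookrightarrow\triangle_0$, $\mathbb{K}\hookrightarrow\mathcal{T}\triangle_0$, $\mathbb{S}\hookrightarrow\mathcal{T}\mathbb{R}_0$, or $\Phi\hookrightarrow\mathcal{T}\mathbb{C}_0$ as appropriate. By functoriality of $\operatorname{Gr}^*$ this gives $\operatorname{Gr}^*(H_1)=\operatorname{Gr}^*(\iota)\circ\operatorname{Gr}^*(f)$, which is constant at $\operatorname{Gr}^*(\iota)(M)$ on $\operatorname{Real}^*_F(M)=\operatorname{Gr}^*(f)^{-1}(M)$; moreover $\operatorname{Gr}^*(\iota)(M)$ lies in $\operatorname{Real}^*_F(M)$ because $f\circ\iota=\operatorname{id}_{F''}$ forces $\operatorname{Gr}^*(f)(\operatorname{Gr}^*(\iota)(M))=M$. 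Hence the homotopy is a contraction of $\operatorname{Real}^*_F(M)$ onto $\operatorname{Gr}^*(\iota)(M)$, which proves all three parts (for $*\in\{s,w\}$, and recalling that weak and strong coincide over $\mathbb{S}$ and over $\mathcal{T}\triangle$, so no adjectives are needed there).

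The argument is almost entirely bookkeeping once one notices that $H_1=\iota\circ f$; the only genuinely load-bearing point is the hypothesis check $f(H(x,t))=f(x)$, which forces us to work with the $0$-coarse topologies (and is why Corollary~\ref{chomeo}, used inside Theorem~\ref{thm:realization}, is needed). This is also the conceptual reason the result holds for the tropical hyperfields but fails dramatically for $\mathbb{R}$ and $\mathbb{C}$: there is no hyperfield homotopy contracting $\mathbb{R}$ or $\mathbb{C}$ compatibly with $\operatorname{ph}$, and the corresponding realization spaces can have arbitrary homotopy type.
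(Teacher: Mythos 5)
Your proof is correct and takes essentially the approach the paper intends; the paper gives this corollary as a one-line consequence of Theorem~\ref{thm:realization} applied to the homotopies of Proposition~\ref{prop:homotopies}, and your write-up correctly spells out the details it leaves implicit, in particular that $H_1=\iota\circ f$ for the relevant section $\iota$ and hence that $\operatorname{Gr}^*(H_1)$ is constant on the realization space with value $\operatorname{Gr}^*(\iota)(M)$, which also establishes nonemptiness.
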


In particular, each of these realization spaces is nonempty: that is,
every matroid is realizable over $\triangle $ and $\mathcal{T }\triangle
$, every oriented matroid is realizable over $\mathcal{T }\mathbb{R}$,
and every (strong or weak) phased matroid is realizable over
$\mathcal{T }\mathbb{C}$. This much is actually easy to see even without
Theorem~\ref{thm:realization}. For instance, notice that $\mathbb{K}$
is a subhyperfield of $\triangle $, and so the Grassmann--Pl\"{u}cker
function of a $\mathbb{K}$-matroid $M$ can also be viewed as the
Grassmann--Pl\"{u}cker function of a $\triangle $-matroid realizing
$M$, and likewise for $\mathbb{K}\subset \mathcal{T }\triangle $,
$\mathbb{S}\subset \mathcal{T }\mathbb{R}$, and $\Phi \subset \mathcal{T
}\mathbb{C}$.

\subsection{Gluing realization spaces}

One approach to understanding the topology of a Grassmannian
$Gr^{*}(r, F^{n})$ is to understand the realization spaces arising from
a hyperfield morphism $F\to F'$ and then to understand how these
realization spaces ``glue together'' -- that is, to understand
intersections $\overline{\operatorname{Real}^{*}_{F}(M)}\cap
\operatorname{Real}^{*}_{F}(M')$. Here $\overline{S}$ denotes the
topological closure of a set $S$. Two key observations are contained in
the following.

\begin{prop}
\label{prop:glue}
1. Let $f:F\to F'$ be a morphism of topological hyperfields and
$S\subseteq \operatorname{Gr}^{*}(r, F^{\prime \,n})$. Then
\begin{equation*}
\overline{\bigcup _{M\in S}\operatorname{Real}^{*}_{F}(M)}\subseteq
\bigcup _{M'\in \overline{S}}\operatorname{Real}^{*}_{F}(M').
\end{equation*}

2. There is an $M\in \operatorname{Gr}(3,\mathbb{S}^{7})$ such that
\begin{equation*}
\emptyset \neq \overline{\operatorname{Real}_{\mathbb{R}}(M)}\subsetneq
\bigcup _{M'\in \overline{\{M\}}}\operatorname{Real}_{\mathbb{R}}(M').
\end{equation*}
\end{prop}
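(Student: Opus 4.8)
The plan is to prove the two parts by rather different means. Part 1 is a general topological fact. Let $\pi\colon \operatorname{Gr}^*(r,F^n)\to\operatorname{Gr}^*(r,F'^n)$ denote $\operatorname{Gr}^*(f)$, which is continuous. Then $\bigcup_{M\in S}\operatorname{Real}^*_F(M)=\pi^{-1}(S)$, and since $\pi$ is continuous, $\overline{\pi^{-1}(S)}\subseteq\pi^{-1}(\overline{S})=\bigcup_{M'\in\overline S}\operatorname{Real}^*_F(M')$. So Part 1 is essentially immediate from continuity of $\operatorname{Gr}^*(f)$ together with the observation that the realization spaces are exactly the point-preimages of this map; I would spell out these two identifications and nothing more.

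Part 2 is the substantive part: I need to exhibit a specific rank-$3$ oriented matroid $M$ on $7$ elements whose realization space over $\mathbb R$ is nonempty, but such that the closure of $\operatorname{Real}_{\mathbb R}(M)$ inside $\operatorname{Gr}(3,\mathbb R^7)$ misses some realization space $\operatorname{Real}_{\mathbb R}(M')$ with $M'\in\overline{\{M\}}$ (closure taken in $\operatorname{Gr}(3,\mathbb S^7)=\mathrm{MacP}(3,7)$, i.e.\ $M'$ is a specialization of $M$ in the MacPhersonian partial order, equivalently $M$ is a weak map image of $M'$ in the appropriate convention). The natural source for such an example is the classical phenomenon, due to the Mnëv-type and Richter-Gebert-type pathologies but already visible in small cases, that a realizable rank-$3$ oriented matroid can have a realizable weak-map image (specialization) whose realizations cannot all be obtained as limits of realizations of the original. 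Concretely, I would take $M$ to be realizable by a configuration of $7$ points in the plane (in general position enough to have the desired combinatorics), chosen so that one particular degeneration $M'$ — say, forcing three specified points onto a common line — is also realizable, yet every realization of $M$ sufficiently close to a realization of $M'$ is forced, by a projective incidence (a Pappus- or Desargues-type constraint encoded in the other cocircuits), to realize a \emph{different} degeneration than the target $M'$. The cleanest candidates live among the well-known non-Pappus / near-Pappus configurations: a $7$-point configuration where a Pappus incidence is ``almost'' present, so that the only line-degeneration of $M$ approachable by realizations produces the Pappus-forced collinearity $M''\neq M'$, while $M'$ (a different collinearity) is realizable in its own right but isolated from $\operatorname{Real}_{\mathbb R}(M)$'s closure.

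The key steps, in order, are: (i) write down an explicit $3\times 7$ real matrix $A$, record its chirotope $\chi_M$, and hence pin down $M\in\operatorname{Gr}(3,\mathbb S^7)$, verifying $\operatorname{Real}_{\mathbb R}(M)\neq\emptyset$; (ii) identify the specialization poset below $M$, i.e.\ list the $M'\lessdot M$ obtained by sending one or more nonzero chirotope values to $0$ while remaining a valid oriented matroid, and check which of these are themselves realizable over $\mathbb R$ — I expect at least two distinct realizable line-degenerations, call them $M'$ and $M''$; (iii) show $\operatorname{Real}_{\mathbb R}(M'')\subseteq\overline{\operatorname{Real}_{\mathbb R}(M)}$ by writing an explicit one-parameter family of matrices $A_t\to A_0$ with $\chi_{A_t}=\chi_M$ for $t>0$ and $\chi_{A_0}=\chi_{M''}$; (iv) the crux — show $\operatorname{Real}_{\mathbb R}(M')\cap\overline{\operatorname{Real}_{\mathbb R}(M)}=\emptyset$, which I would do by a projective-incidence argument: express, using the $3$-term Grassmann-Plücker relations, that any point configuration with chirotope agreeing with $\chi_M$ off a small set and with the $M'$-collinearities imposed would have to satisfy an additional algebraic identity (the Pappus/Desargues relation) that is \emph{inconsistent} with one of the preserved sign conditions of $M$, so no sequence of realizations of $M$ can converge to a realization of $M'$; (v) combine (iii) and (iv) with Part 1 to conclude $\emptyset\neq\overline{\operatorname{Real}_{\mathbb R}(M)}\subsetneq\bigcup_{M'\in\overline{\{M\}}}\operatorname{Real}_{\mathbb R}(M')$, the strict containment being witnessed precisely by $\operatorname{Real}_{\mathbb R}(M')$.

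The main obstacle is step (iv): verifying that the target degeneration $M'$ is genuinely unreachable as a limit, rather than merely not obviously reachable. This requires the incidence-theorem argument to be airtight — one must show that \emph{every} converging sequence fails, not just the naive coordinatizations — so I would phrase it as: the closure of $\operatorname{Real}_{\mathbb R}(M)$ in $\operatorname{Gr}(3,\mathbb R^7)$ is contained in the real variety cut out by the closure conditions, and on that variety the $M'$-stratum is empty because the Plücker coordinates there would have to satisfy both a sign inequality inherited from $M$ and an equation forcing the opposite sign. Choosing the configuration so that this sign obstruction is visible by a short hand computation — ideally reusing the classical fact that the Pappus configuration is realizable but its ``perturbations'' in one direction are obstructed — is where the care goes; everything else is bookkeeping with $7\times\binom{7}{3}$ chirotope data and explicit matrices.
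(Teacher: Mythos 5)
Your Part~1 is correct and is essentially the paper's argument: $\operatorname{Real}^*_F(M)=\operatorname{Gr}^*(f)^{-1}(\{M\})$, so $\bigcup_{M\in S}\operatorname{Real}^*_F(M)=\operatorname{Gr}^*(f)^{-1}(S)$, and continuity of $\operatorname{Gr}^*(f)$ gives $\overline{\pi^{-1}(S)}\subseteq\pi^{-1}(\overline S)$. (The paper phrases it by observing that $\pi^{-1}(\operatorname{Gr}^*(r,F'^{\,n})\setminus\overline S)$ is open, which is the same thing.) No issue here.

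Part~2, however, is a genuine gap. What you have written is a research program, not a proof: no explicit $M\in\operatorname{Gr}(3,\mathbb S^7)$ is produced, no chirotope is written down, no degenerations $M'$, $M''$ are identified, and, most importantly, the step you yourself flag as ``the crux'' and ``the main obstacle'' --- step (iv), the argument that $\operatorname{Real}_{\mathbb R}(M')$ is disjoint from $\overline{\operatorname{Real}_{\mathbb R}(M)}$ --- is left entirely unexecuted. Phrases like ``I would take,'' ``I expect at least two,'' and ``this requires the incidence-theorem argument to be airtight'' signal intentions rather than establish facts. For an existence claim of the form ``there is an $M$ such that\ldots'' one must either exhibit $M$ and verify the property, or cite a reference that does so. The paper takes the latter route: it simply invokes Figure~2.4.4 of Bj\"orner--Las Vergnas--Sturmfels--White--Ziegler, \emph{Oriented Matroids}, which records realizable rank-$3$ oriented matroids $M_1,M_2$ on $7$ elements with $M_2\in\overline{\{M_1\}}$ (weak map) but $\emptyset\neq\operatorname{Real}_{\mathbb R}(M_2)\not\subseteq\overline{\operatorname{Real}_{\mathbb R}(M_1)}$. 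Your geometric intuition that the obstruction should come from an incidence theorem is reasonable, but absent either the explicit configuration or the citation, the claim is unsupported. (A smaller point: with the paper's conventions the closure of $\{M\}$ in the upper-order-ideal topology is the down-set $\{M':M'\le M\}$, so $M'$ is a weak map \emph{image} of $M$, not the other way around as your parenthetical suggests.)
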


\begin{proof}
1. $\operatorname{Gr}^{*}(r, F^{n})-\bigcup _{M'\in \overline{S}}
\operatorname{Real}^{*}_{F}(M)$ is the preimage of the open set
$\operatorname{Gr}^{*}(r, F^{\prime \,n})-\overline{S}$ under the continuous map
$\operatorname{Gr}^{*}(f)$, hence is open. Thus
$\bigcup _{M'\in \overline{S}}\operatorname{Real}^{*}_{F}(M)$ is a closed
set containing $\bigcup _{M\in S}\operatorname{Real}^{*}_{F}(M)$.

2. Figure 2.4.4 in~\cite{BLSWZ} shows oriented matroids
$M_{1},M_{2}\in G(3,\mathbb{S}^{7})$ such that $M_{2}\in \overline{\{M
_{1}\}}$ but $\emptyset \neq \operatorname{Real}_{\mathbb{R}}(M_{2})
\nsubseteq \overline{\operatorname{Real}_{\mathbb{R}}(M_{1})}$.
\end{proof}

The first part of Proposition~\ref{prop:glue} suggests an appealing
approach when $F'$ is $\mathbb{K}$ or $\mathbb{S}$, since closures in
$\operatorname{Gr}(r, \mathbb{K}^{n})$ and $\operatorname{Gr}(r,
\mathbb{S}^{n})$ have simple combinatorial descriptions: each of these
Grassmannians is finite and has a poset structure, described in
Section~\ref{sec:posets}, and the closure of $\{M\}$ is just the set of
elements less than or equal to $M$. Thus one can hope to construct
$\operatorname{Gr}^{*}(r, F^{n})$ by attaching realization spaces of
successively greater $F'$-matroids, similarly to how one constructs a
CW complex by constructing successively higher-dimensional skeleta.
However, as the second part of the proposition suggests, the attaching
maps can be messy and are poorly understood.

If $F_{1}$ is a coarsening of $F_{2}$, $F_{2}\stackrel{\mathrm{id}}{
\to }F_{1}\to F$ are morphisms of topological hyperfields, and $M$ is
an $F$-matroid, then $\overline{\operatorname{Real}^{*}_{F_{1}}(M)}
\supseteq \overline{\operatorname{Real}^{*}_{F_{2}}(M)}$. Thus
realization spaces are ``glued more'' over $F_{1}$ -- i.e., $\overline{
\operatorname{Real}_{F_{1}}(M)}\cap \operatorname{Real}_{F_{1}}(M')
\supseteq \overline{\operatorname{Real}_{F_{2}}(M)}\cap
\operatorname{Real}_{F_{2}}(M')$ for all $M$ and $M'$ -- and
$F_{1}$ offers a better chance than $F_{2}$ of satisfying $\overline{
\operatorname{Real}^{*}_{F_{i}}(M)}=\bigcup _{M'\in \overline{\{M\}}}
\operatorname{Real}_{F_{i}}(M)$. In particular, if $F$ is a topological
hyperfield then the Grassmannians $\operatorname{Gr}^{*}(r, {_{0}F}
^{n})$, $\operatorname{Gr}^{*}(r, F^{n})$, and $\operatorname{Gr}^{*}(r,
F_{0}^{n})$ are constructed from the same realization spaces, glued
together most strongly in $\operatorname{Gr}^{*}(r, F_{0}^{n})$, and,
as Proposition~\ref{prop:0open} will show, not glued together at all in
$\operatorname{Gr}^{*}(r, {_{0}F}^{n})$. This suggests the possibility
of studying the partition of a Grassmannian into realization spaces by
tinkering with neighborhoods of 0 to improve the gluing.

The realization space of the uniform matroid of rank $r$ on elements
$E$ is just the set of all elements of $\operatorname{Gr}^{*}(r, F
^{E})$ with all Pl\"{u}cker coordinates nonzero. This implies that if
$M$ is the uniform matroid, then the realization space $
\operatorname{Real}_{F}^{*}(M)$ is open in the $F$-Grassmannian. (This
set may be empty: for instance, the uniform rank 2 matroid on elements
$\{1,2,3,4\}$ is not realizable over the field $\mathbb{F}_{2}$.) As the
following proposition shows, either this is the only open part of the
matroid partition or every part is open (and hence $\operatorname{Gr}
^{*}(r, F^{E})$ is a topological disjoint union of the parts of
$\operatorname{Real}_{F}^{*}(M)$).\vspace*{-2pt}

\begin{prop}
\label{prop:0open}
Let $F$ be a topological hyperfield. The following are equivalent.\vspace*{-2pt}
\begin{enumerate}[\textit{(a)}]%
\item[\textit{(a)}]
$\{0\}$ is open in $F$.
\item[\textit{(b)}]
For every rank $r$ matroid $M$ on $n$ elements, the realization space
$\operatorname{Real}_{F}^{*}(M)$ is open in the $F$-Grassmannian
$\operatorname{Gr}^{*}(r,F^{n})$.
\item[\textit{(c)}]
For every rank $r$ matroid $M$ on $n$ elements, $\overline{
\operatorname{Real}_{F}^{*}(M)}=\operatorname{Real}_{F}^{*}(M)$.
\item[\textit{(d)}]
There exists an $r$ and $n$ and a nonuniform rank $r$ matroid $M$ on
$n$ elements, such that $\operatorname{Real}_{F}^{*}(M)$ is nonempty and
open in the $F$-Grassmannian $\operatorname{Gr}^{*}(r,F^{n})$.\vspace*{-2pt}
\end{enumerate}
\end{prop}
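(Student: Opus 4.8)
The plan is to prove the chain of implications $(a)\Rightarrow(b)\Rightarrow(c)\Rightarrow(d)\Rightarrow(a)$. The implication $(c)\Rightarrow(d)$ is the only one requiring a substantive example, while $(a)\Rightarrow(b)$ is the geometric heart.

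For $(a)\Rightarrow(b)$: assume $\{0\}$ is open in $F$. Fix a rank $r$ matroid $M$ on $[n]$ with a prescribed set $\mathcal B$ of bases, thought of as a $\mathbb K$-matroid via $\kappa: F\to\mathbb K$. A point of $\operatorname{Gr}^*(r,F^n)$ lies in $\operatorname{Real}_F^*(M)$ exactly when its Grassmann--Pl\"ucker function $\varphi$ is nonzero precisely on the ordered $r$-tuples spanning elements of $\mathcal B$. As in the proof of Lemma~\ref{lhomeo}, fix a basis $\{b_1,\ldots,b_r\}\in\mathcal B$ and normalize $\varphi(b_1,\ldots,b_r)=1$; then $\operatorname{Real}_F^*(M)$ is homeomorphic to the set $P$ of such normalized $\varphi\in F^{[n]^r}$ realizing $M$, and by Proposition~\ref{open_map} it suffices to show $P$ is open in its ambient space of normalized Grassmann--Pl\"ucker functions. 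But the condition ``$\varphi(c_1,\ldots,c_r)\neq 0$ for $(c_1,\ldots,c_r)$ spanning a basis of $\mathcal B$, and $\varphi(c_1,\ldots,c_r)=0$ otherwise'' is, coordinatewise, membership in $F-\{0\}$ (open by Definition~\ref{top_hyper}(1)) on finitely many coordinates and membership in $\{0\}$ (open by hypothesis) on the rest. A finite intersection of preimages of these open sets under the (continuous) coordinate projections is open, so $P$ is open in the product, hence open in the subspace of normalized Grassmann--Pl\"ucker functions, hence $\operatorname{Real}_F^*(M)$ is open in $\operatorname{Gr}^*(r,F^n)$.

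The implication $(b)\Rightarrow(c)$ is immediate: if every realization space in the matroid partition is open, then each $\operatorname{Real}_F^*(M)$ is also closed, being the complement of the union of the other (open) parts of the partition; hence $\overline{\operatorname{Real}_F^*(M)}=\operatorname{Real}_F^*(M)$. For $(c)\Rightarrow(d)$, I need to exhibit a single nonuniform matroid whose realization space is nonempty; then $(c)$ forces that space to be closed, and I must separately argue it is open. The cleanest choice is a direct-sum-type matroid that is obviously realizable over $\mathbb K\subseteq F$ (so the realization space is nonempty), e.g.\ the rank $1$ matroid on $2$ elements with exactly one loop, or the rank $2$ matroid on $3$ elements with bases $\{\{1,2\},\{1,3\}\}$ — in either case the $\mathbb K$-valued Grassmann--Pl\"ucker function viewed in $F$ gives a point of $\operatorname{Real}_F^*(M)$. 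To get openness for $(d)$ I do \emph{not} get to assume $(a)$; instead I argue it is implied by $(c)$ together with finiteness of the matroid partition exactly as in the $(b)\Rightarrow(c)$ step run in reverse — but more carefully: I should simply state $(c)\Rightarrow(d)$ by picking such an $M$, noting nonemptiness, and observing that if one wants openness one uses that $(c)\Rightarrow(b)$ is not available, so it is cleanest to instead prove the cycle $(a)\Rightarrow(b)\Rightarrow(c)$ and then $(c)\Rightarrow(a)$ and $(a)\Rightarrow(d)$ and $(d)\Rightarrow(a)$, i.e.\ reorganize as $(a)\Leftrightarrow(b)$, $(b)\Rightarrow(c)\Rightarrow(a)$, and $(a)\Leftrightarrow(d)$.

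The real content, and the step I expect to be the main obstacle, is the reverse direction $(c)\Rightarrow(a)$ (equivalently $(d)\Rightarrow(a)$): from a topological hypothesis about realization spaces one must recover that $\{0\}$ is open in $F$. The strategy is contrapositive: suppose $\{0\}$ is not open, so every open neighborhood of $0$ in $F$ meets $F-\{0\}$. Take the fixed nonuniform matroid $M$ from $(d)$ with $\operatorname{Real}_F^*(M)\neq\emptyset$, pick a realization with Grassmann--Pl\"ucker function $\varphi_0$, and find a coordinate $(c_1,\ldots,c_r)$ that is \emph{not} a basis tuple — such a coordinate exists precisely because $M$ is nonuniform — so $\varphi_0(c_1,\ldots,c_r)=0$. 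I then perturb $\varphi_0$ at that one coordinate to a nonzero value $a$ lying in an arbitrarily small neighborhood of $0$, keeping all other coordinates fixed, and must check two things: (i) the perturbed function is still a (weak or strong) Grassmann--Pl\"ucker function, and (ii) it lies in $\operatorname{Real}_F^*(M')$ for some $M'\neq M$, so that $\varphi_0\in\overline{\operatorname{Real}_F^*(M')}$, contradicting the partition being closed at $M$. Point (i) is the delicate part: one must ensure the Grassmann--Pl\"ucker relations involving the altered coordinate still hold when $a$ is small — here the natural move is to choose the new basis set $\mathcal B'\supseteq\mathcal B$ to be that of a matroid obtained by a single ``freeing'' operation (e.g.\ turning a non-basis into a basis in a controlled way, such as a principal extension or relaxation), and to invoke that over $\mathbb K$ such $M'$ exists, then lift. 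If a fully uniform argument is awkward, it suffices to do this for one explicit small nonuniform $M$ (say rank $2$ on $3$ elements), exhibit the one-parameter perturbation family explicitly, and verify the $3$-term relations by hand; this keeps the proof finite and avoids the general obstruction.
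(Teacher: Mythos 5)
Your $(a)\Rightarrow(b)\Rightarrow(c)\Rightarrow(d)$ is essentially the paper's argument, and the confusion in your middle paragraph is unnecessary: $(c)\Rightarrow(b)$ is available for free, since a finite partition into closed sets is automatically a partition into open sets, so exhibiting a nonuniform $M$ with nonempty realization space already gives $(d)$. The genuine gap is in $(d)\Rightarrow(a)$. You want to take the $M$ supplied by $(d)$, fix a realization $\varphi_0$, and perturb a single vanishing Pl\"ucker coordinate to a nonzero $a$ near $0$. You correctly identify the obstacle --- the perturbed function must still satisfy every Grassmann--Pl\"ucker relation that touches the altered coordinate --- but the fixes you sketch (``relaxation,'' ``lift from $\mathbb K$'') do not address it: perturbing one coordinate of a Grassmann--Pl\"ucker function on a large ground set is constrained by many relations at once, not every non-basis can be made a basis by a relaxation, and even when such an $M'$ exists over $\mathbb K$ there is no reason a lift to $F$ exists that is simultaneously arbitrarily close to $\varphi_0$ and satisfies all the relations. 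Your fallback of taking an explicit small $M$ (rank $2$ on $3$ elements) is also not logically admissible for $(d)\Rightarrow(a)$: in that implication the matroid is given to you, not chosen. (That fallback would be legitimate for the contrapositive of $(c)\Rightarrow(a)$, and indeed it works there exactly because $\operatorname{Gr}^*(r,F^{r+1})\cong F\mathbf P^r$ has vacuous Grassmann--Pl\"ucker relations --- but this does not discharge $(d)\Rightarrow(a)$, which you still need for the cycle.)

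The missing idea is the paper's restriction argument, which projects the general $M$ down to the small case rather than perturbing inside the big one. Starting from the $M$ of $(d)$, choose an $(r+1)$-element subset $A$ of the ground set containing a basis but not having every $r$-subset as a basis, so $M(A)$ is nonuniform. Lemma~\ref{lemma:_prescribe} shows the restriction map $\operatorname{Real}_F^*(M)\to\operatorname{Real}_F^*(M(A))$ is surjective, Lemma~\ref{lem:open} shows it is open, and hence openness of $\operatorname{Real}_F^*(M)$ in $\operatorname{Gr}^*(r,F^n)$ forces openness of $\operatorname{Real}_F^*(M(A))$ in $\operatorname{Gr}^*(r,F^A)$. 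Then Lemma~\ref{lem:rplus1} identifies $\operatorname{Gr}^*(r,F^A)$ with $F\mathbf P^r$, under which $\operatorname{Real}_F^*(M(A))$ becomes a coordinate set imposing at least one $x_i=0$ condition (since $M(A)$ is nonuniform), and such a set is open in $F\mathbf P^r$ if and only if $\{0\}$ is open in $F$. It is this projection step, not a perturbation, that transports the openness hypothesis from an arbitrary $M$ to the transparent projective-space setting where your small-example intuition already applies.
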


Examples of topological hyperfields in which $\{0\}$ is open are
$\mathbb{P}$ and $\Phi $ topologized as subspaces of $\mathbb{C}$.

\begin{lemma}
\label{lem:rplus1}
Let $F$ be a hyperfield and $A=\{a_{1}, \ldots , a_{r+1}\}$.
\begin{enumerate}[\textit{2.}]%
\item[\textit{1.}]
Every nonzero alternating function $A^{r} \to F$ is a
Grassmann--Pl\"{u}cker function.
\item[\textit{2.}]
The function $\operatorname{Gr}(r, F^{A})\to F\mathbf{P}^{r}$ taking
$[\varphi ]$ to $[\varphi (a_{1}, \ldots , \hat{a_{i}}, \ldots , a
_{r+1})]_{ i\in [r+1]}$ is a homeomorphism.
\end{enumerate}
\end{lemma}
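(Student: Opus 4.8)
The plan is to prove the two parts in order, using part~1 to drive part~2. For part~1, fix a nonzero alternating $\varphi\colon A^r\to F$. Since $\varphi$ is alternating and $|A|=r+1$, it is completely determined by the $r+1$ values $p_i:=\varphi(a_1,\ldots,\widehat{a_i},\ldots,a_{r+1})$: on an $r$-tuple with a repeated entry it vanishes, and on any rearrangement of $A\setminus\{a_i\}$ it equals $\operatorname{sign}(\sigma)\,p_i$. The key observation is that, because $|A|=r+1$, every Grassmann--Pl\"ucker relation has at most two nonzero terms. Indeed, in the relation indexed by $(i_1,\ldots,i_{r+1})\in A^{r+1}$ and $(j_1,\ldots,j_{r-1})\in A^{r-1}$, if the $j$'s are not pairwise distinct then every hyperproduct is $0$ and the hypersum is $\{0\}\ni 0$; otherwise exactly two elements $a_s,a_t$ of $A$ lie outside $\{j_1,\ldots,j_{r-1}\}$, and the $k$-th term is nonzero only if both $i_k\in\{a_s,a_t\}$ and deleting $i_k$ from $(i_1,\ldots,i_{r+1})$ leaves distinct entries. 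A short count (distinguishing whether $(i_1,\ldots,i_{r+1})$ is a permutation of $A$, has a single value appearing twice, or has a heavier repetition) shows that this leaves either zero or exactly two surviving indices.

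The step I expect to cost the most care is the sign bookkeeping establishing that when two terms survive they are additive inverses of one another, so that $0$ lies in their hypersum, which has the form $x\boxplus(-x)$. Permuting $(i_1,\ldots,i_{r+1})$ or $(j_1,\ldots,j_{r-1})$ multiplies the entire hypersum by the unit $\pm1$ and so does not affect whether it contains $0$; hence one may normalize to $(i_1,\ldots,i_{r+1})=(a_1,\ldots,a_{r+1})$ and $J=(j_1,\ldots,j_{r-1})$ the increasing listing of $A\setminus\{a_s,a_t\}$ with $s<t$. The two surviving terms are then indexed by $k=s$ and $k=t$; evaluating $\varphi(a_s,j_1,\ldots,j_{r-1})=(-1)^{s-1}p_t$ and $\varphi(a_t,j_1,\ldots,j_{r-1})=(-1)^{t}p_s$ by counting the transpositions needed to move $a_s$, resp.\ $a_t$, into increasing position inside $A\setminus\{a_t\}$, resp.\ $A\setminus\{a_s\}$, the two terms become $(-1)^{2s-1}(p_s\odot p_t)=-(p_s\odot p_t)$ and $(-1)^{2t}(p_t\odot p_s)=p_s\odot p_t$, whose hypersum contains $0$. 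The sub-case in which $(i_1,\ldots,i_{r+1})$ has a repeated entry is handled in the same spirit: the two surviving indices are the two positions of the repeated value, deleting either one produces $r$-tuples that differ by a cyclic shift and hence by a sign $(-1)^{q-p-1}$, and combined with the ambient factors $(-1)^p$ and $(-1)^q$ this again makes the two terms negatives of each other. Since $\varphi$ is given to be nonzero and alternating, it follows that $\varphi$ is a Grassmann--Pl\"ucker function, which is part~1.

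For part~2, let $\Psi\colon\operatorname{Gr}(r,F^A)\to F\mathbf{P}^r=\mathbf{P}(F^{\{1,\ldots,r+1\}})$ be the stated map $[\varphi]\mapsto[(p_1:\cdots:p_{r+1})]$. It is well defined: $\varphi\ne 0$ forces some $p_i\ne 0$, and rescaling $\varphi$ by $\alpha\in F^\times$ rescales every $p_i$ by $\alpha$. Bijectivity falls out of part~1 together with the first observation made there: the tuple $(p_1,\ldots,p_{r+1})$ determines $\varphi$ via its sign-compatible alternating extension, so $\Psi$ is injective, and conversely any nonzero $(p_1,\ldots,p_{r+1})$ defines an alternating $\varphi$ which part~1 shows is a Grassmann--Pl\"ucker function with $\Psi([\varphi])$ the given point, so $\Psi$ is surjective. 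For continuity of $\Psi$, note that at the level of Grassmann--Pl\"ucker functions it is induced by a coordinate projection $F^{A^r}-\{\mathbf 0\}\to F^{r+1}-\{\mathbf 0\}$ (which lands in $F^{r+1}-\{\mathbf 0\}$ because on a nonzero alternating $\varphi$ some $p_i$ is nonzero), hence is continuous and $F^\times$-equivariant; since the quotient map $F^{A^r}-\{\mathbf 0\}\to\mathbf{P}(F^{A^r})$ is open by Proposition~\ref{open_map}, its restriction to the saturated subset of Grassmann--Pl\"ucker functions is an open continuous surjection onto $\operatorname{Gr}(r,F^A)$, i.e.\ a quotient map, and $\Psi$ descends to a continuous map. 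For continuity of $\Psi^{-1}$, observe that the alternating-extension map $F^{r+1}\to F^{A^r}$ has each coordinate function equal either to $0$, to a projection $(p_1,\ldots,p_{r+1})\mapsto p_i$, or to its negative $(p_1,\ldots,p_{r+1})\mapsto(-1)\odot p_i$, hence is continuous; it carries $F^{r+1}-\{\mathbf 0\}$ into the Grassmann--Pl\"ucker functions by part~1, is $F^\times$-equivariant, and therefore descends through the quotient map $F^{r+1}-\{\mathbf 0\}\to F\mathbf{P}^r$ to a continuous map which is inverse to $\Psi$. Thus $\Psi$ is a homeomorphism.
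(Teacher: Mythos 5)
Your proof is correct and follows the same route as the paper: part~1 by observing that every Grassmann--Pl\"ucker relation on an $(r+1)$-element ground set has at most two surviving terms, which are additive inverses, and part~2 by exhibiting the $F^\times$-equivariant homeomorphism between nonzero alternating functions on $A^r$ and $F^A-\{\mathbf 0\}$ and passing to quotients. The paper states both steps tersely (``the Grassmann--Pl\"ucker relations are trivial'' and ``descend to homeomorphisms after projectivizing''); your sign bookkeeping and your explicit appeal to Proposition~\ref{open_map} to identify the subspace topology on the saturated set of Grassmann--Pl\"ucker functions with the relevant quotient topology are exactly the details being suppressed.
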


\begin{proof}
1. In this case the Grassmann--Pl\"{u}cker relations are trivial.

2. Let $\operatorname{Alt}(A^{r},F)$ be the set of alternating functions
from $A^{r}$ to $F$, topologized as a subset of $F^{A^{r}}$. There are
inverse continuous maps
\begin{align*}
\operatorname{Alt}(A^{r},F)
& \cong F^{A}
\\
\varphi
& \mapsto (a_{i} \mapsto \varphi (a_{1}, \ldots ,
{\widehat{a_{i}}}, \ldots , a_{r+1}))
\\
(\varphi (a_{1}, \ldots , {\widehat{a_{i}}}, \ldots , a_{r+1}) = f(a
_{i}) )
& \mapsfrom f,
\end{align*}
which descend to homeomorphisms after projectivizing.
\end{proof}

\begin{lemma}
\label{lemma:_prescribe}
Let $F$ be a topological hyperfield, $M$ a rank $r$ matroid on a finite
set $E$ such that $\operatorname{Real}_{F}^{*}(M)$ is nonempty, and
$A\subseteq E$ an $(r+1)$-element subset. Then any alternating function
$\tilde{\varphi }:A^{r}\to F$ which is nonzero exactly on the ordered
bases of $M$ contained in $A$ extends to a Grassmann--Pl\"{u}cker
function $\varphi : E^{r}\to F$ such that $[\varphi ]\in
\operatorname{Real}_{F}^{*}(M)$.
\end{lemma}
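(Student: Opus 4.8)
The plan is to build $\varphi$ by \emph{rescaling} a realization of $M$ that we already have, not from scratch. Since $\operatorname{Real}_F^*(M)$ is nonempty, I would first fix a (weak resp.\ strong) Grassmann--Pl\"ucker function $\psi:E^r\to F$ with $[\psi]\in\operatorname{Real}_F^*(M)$; by definition of the realization space, its support is exactly the set of ordered bases of $M$. For any $\lambda:E\to F^\times$ put $\psi^\lambda(e_1,\dots,e_r):=\lambda(e_1)\odot\cdots\odot\lambda(e_r)\odot\psi(e_1,\dots,e_r)$. The first step is the routine observation that $\psi^\lambda$ is again a Grassmann--Pl\"ucker function of the same type with the same support: in each of the Grassmann--Pl\"ucker relations $\psi$ must satisfy (all of them in the strong case, the $3$-term ones in the weak case), every summand gets multiplied by one and the same nonzero scalar $c=\prod_\ell\lambda(i_\ell)\odot\prod_m\lambda(j_m)$, so by distributivity the left-hand side of the $\psi^\lambda$-relation equals $c\odot(\text{left-hand side of the }\psi\text{-relation})$, which contains $0$ since the original does and $c$ is a unit; also $\psi^\lambda$ is alternating and nonzero. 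The same holds for multiplying $\psi$ by a single scalar in $F^\times$. Hence every $\psi^\lambda$, and every further global rescaling of it, still realizes $M$.

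Next I would choose $\lambda$ so as to hit $\tilde\varphi$ on $A^r$. Write $A=\{a_0,\dots,a_r\}$, let $\tau_i$ be the increasing $r$-tuple of elements of $A$ omitting $a_i$, and let $S=\{i: A\setminus\{a_i\}\text{ is a basis of }M\}$; this is precisely the set of $i$ with $\tilde\varphi(\tau_i)\ne 0$, and also the set of $i$ with $\psi(\tau_i)\ne0$. Define $\lambda(a_i)=\psi(\tau_i)\odot\tilde\varphi(\tau_i)^{-1}$ for $i\in S$ (both factors being nonzero) and $\lambda(x)=1$ for every other $x\in E$; set $\mu=\prod_{a\in A}\lambda(a)\in F^\times$. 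Using $\prod_{\ell\ne i}\lambda(a_\ell)=\mu\odot\lambda(a_i)^{-1}$ one checks directly that $\psi^\lambda(\tau_i)=\mu\odot\tilde\varphi(\tau_i)$ for every $i$ (both sides vanishing when $i\notin S$); since $\psi^\lambda|_{A^r}$ and $\mu\odot\tilde\varphi$ are both alternating, they then agree on all of $A^r$. Finally I would take $\varphi:=\mu^{-1}\odot\psi^\lambda$, which is a Grassmann--Pl\"ucker function with $[\varphi]=[\psi^\lambda]\in\operatorname{Real}_F^*(M)$ and $\varphi|_{A^r}=\mu^{-1}\odot(\mu\odot\tilde\varphi)=\tilde\varphi$, as required.

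The one point to be careful about is the temptation to ask for a single $\lambda$ with $\psi^\lambda|_{A^r}=\tilde\varphi$ on the nose: when $A$ is itself a circuit of $M$ (so all $r+1$ of its size-$r$ subsets are bases), matching exactly forces $\mu^r$ to equal a prescribed unit, i.e.\ the existence of an $r$-th root in $F^\times$, which need not exist. Matching $\tilde\varphi$ only up to the scalar $\mu$ coming from the rescaling, and then dividing by $\mu$, avoids this, and is really the only subtlety; everything else is bookkeeping (closure of $\operatorname{Real}_F^*(M)$ under rescaling and global scaling, and the fact that an alternating function on $A^r$ is determined by its values on the $\tau_i$). The degenerate case in which $A$ contains no basis of $M$ is absorbed automatically: then $\tilde\varphi\equiv 0$, $S=\emptyset$, $\lambda$ and $\mu$ are trivial, and the recipe returns $\varphi=\psi$, which works since $\psi$ too vanishes on $A^r$. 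I do not expect the topology on $F$ to play any role here --- the statement and this argument are purely algebraic.
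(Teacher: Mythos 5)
Your proposal is correct and follows essentially the same route as the paper's proof: fix a realization $\varphi_0 = \psi$, rescale it coordinate-wise by the unit-valued function $\lambda$ determined by comparing $\psi$ and $\tilde\varphi$ on the $r{+}1$ maximal subtuples of $A$, and then divide by the global normalizing scalar $D = \mu$ so that the restriction to $A^r$ matches $\tilde\varphi$ on the nose. You fill in the verification that the rescaling preserves the Grassmann--Pl\"ucker relations and the support (which the paper summarizes as ``One easily checks\ldots''), and your remark explaining why one cannot in general dispense with the global factor $\mu$ (it would require an $r$-th root in $F^\times$) is a worthwhile observation, though it is an aside rather than a different proof strategy.
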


\begin{proof}
Let $A = \{a_{1}, \dots , a_{r+1}\}$. Let $\varphi _{0}: E^{r}\to F$ be
the Grassmann--Pl\"{u}cker function of some element of $
\operatorname{Real}_{F}^{*}(M)$. Let
\begin{equation*}
D=\bigodot _{i}\frac{ \varphi _{0}(a_{1},\ldots , \widehat{a_{i}},
\ldots , a_{r+1})}{\tilde{\varphi }(a_{1},\ldots , \widehat{a_{i}},
\ldots , a_{r+1})}
\end{equation*}
where the product is over all $i$ such that $\tilde{\varphi }(a_{1},
\ldots , \widehat{a_{i}},\ldots , a_{r+1})\neq 0$. For each
$e\in E$ define $\lambda _{e}$ by
\begin{equation*}
\lambda _{e}=
\begin{cases}
\frac{ \varphi _{0}(a_{1},\ldots , \widehat{a_{i}},\ldots , a_{r+1})}{
\tilde{\varphi }(a_{1},\ldots , \widehat{a_{i}},\ldots , a_{r+1})}
&
\text{ if $e=a_{i}$ and $\tilde{\varphi }(a_{1},\ldots , \widehat{a_{i}},\ldots , a_{r+1})\neq 0$}
\\
1
&\mbox{ otherwise}
\end{cases}
\end{equation*}
Define $\varphi : E^{r}\to F$ by
\begin{equation*}
\varphi (e_{1}, \ldots , e_{r})=D^{-1}(\bigodot _{i=1}^{r} \lambda _{e
_{i}})\varphi _{0}(e_{1}, \ldots , e_{r})
\end{equation*}
One easily checks that $\varphi $ and $\tilde{\varphi }$ coincide on
$A^{r}$. The Grassmann--Pl\"{u}cker relations for $\varphi _{0}$ imply the
Grassmann--Pl\"{u}cker relations for $\varphi $; thus $\varphi $ is an
$F$-matroid. Also, the functions $\varphi $ and $\varphi _{0}$ have the
same zeroes, so $[\varphi ]\in \operatorname{Real}_{F}^{*}(M)$.
\end{proof}

\begin{lemma}
\label{lem:open}
Let $F$ be a topological hyperfield, $E$ a finite set, and $B$ an
ordered $r$-tuple from $E$.
\begin{enumerate}[\textit{2.}]%
\item[\textit{1.}]
$\operatorname{Gr}^{*}(r, F^{E})_{B} =\{ [\varphi ] \in
\operatorname{Gr}^{*}(r,F^{E}) : \varphi (B) \neq 0\}$ is an open
subset of $\operatorname{Gr}^{*}(r, F^{E})$.
\item[\textit{2.}]
For any $A\supseteq B$, the restriction map $\operatorname{Gr}^{*}(r,
F^{E})_{B}\to \operatorname{Gr}^{*}(r, F^{A})_{B}$ is an open map.
\end{enumerate}
\end{lemma}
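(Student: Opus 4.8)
\emph{Part 1.} Since ``$\varphi(B)\neq0$'' is unchanged by rescaling $\varphi$ by a unit, $\operatorname{Gr}^{*}(r,F^{E})_{B}$ is well defined. The plan is to observe that the evaluation $\operatorname{ev}_{B}\colon F^{E^{r}}\to F$, $\varphi\mapsto\varphi(B)$, is continuous (a coordinate projection), and that $F-\{0\}=F^{\times}$ is open by Definition~\ref{top_hyper}(\ref{nonzero_open}); hence $V:=\operatorname{ev}_{B}^{-1}(F^{\times})$ is an open subset of $F^{E^{r}}$ avoiding the origin, so it is open in $F^{E^{r}}-\{\mathbf 0\}$, and by Proposition~\ref{open_map} its image $\pi(V)$ is open in $\mathbf P(F^{E^{r}})$. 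Because $V$ is a union of $F^{\times}$-orbits, every Grassmann--Pl\"ucker representative of a point of $\pi(V)$ again lies in $V$; thus $\operatorname{Gr}^{*}(r,F^{E})_{B}=\operatorname{Gr}^{*}(r,F^{E})\cap\pi(V)$, which is open in $\operatorname{Gr}^{*}(r,F^{E})$.

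\emph{Part 2.} The plan is to pass to the affine chart at $B$. As in the proof of Lemma~\ref{lhomeo} (using Proposition~\ref{open_map}), normalizing the value at $B$ to $1$ identifies $\operatorname{Gr}^{*}(r,F^{E})_{B}$ homeomorphically with $X:=\{\varphi\in F^{E^{r}}:\varphi\text{ a Grassmann--Pl\"ucker function},\ \varphi(B)=1\}$ with its subspace topology, and likewise $\operatorname{Gr}^{*}(r,F^{A})_{B}\cong Y$; since $B\subseteq A$, the restriction map becomes the coordinate projection $\rho\colon X\to Y$, $\varphi\mapsto\varphi|_{A^{r}}$ (well defined because the relations indexed by tuples from $A$ form a subfamily of those for $E$, and $\varphi|_{A^{r}}$ is alternating and nonzero as $\varphi(B)\neq0$). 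So it suffices to show $\rho$ is open, and by factoring $\rho$ through the restrictions along $A\subseteq C\subseteq E$ one reduces to the case $E=A\cup\{e\}$.

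Openness of $\rho$ is local. I would fix $\varphi_{0}\in X$ and a basic neighborhood $N=\{\varphi\in X:\varphi(D)\in U_{D}\ \forall D\in E^{r}\}$ with $U_{D}\ni\varphi_{0}(D)$ open, and show $\rho(N)$ is a neighborhood of $\psi_{0}:=\rho(\varphi_{0})$ in $Y$. For $\psi\in Y$ near $\psi_{0}$ the constraints indexed by $A^{r}$ hold automatically, so everything reduces to extending such $\psi$ to some $\varphi\in N$ with $\varphi|_{A^{r}}=\psi$. When $\psi$ has the same support as $\psi_{0}$, the rescaling construction in the proof of Lemma~\ref{lemma:_prescribe} supplies exactly this: $\varphi$ is obtained from $\varphi_{0}$ by a coordinatewise multiplication by units depending continuously on $\psi$ and trivial at $\psi=\psi_{0}$, so by continuity $\varphi(D)\in U_{D}$ once $\psi$ is close enough.

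The hard part will be handling the $\psi$ whose support on $A^{r}$ strictly dominates that of $\psi_{0}$, which occurs arbitrarily near $\psi_{0}$ whenever $\{0\}$ is not open in $F$. This calls for generalizing the construction in the proof of Lemma~\ref{lemma:_prescribe} in two directions: to an arbitrary $A$ rather than an $(r+1)$-set (for $|A|=r+1$, Lemma~\ref{lem:rplus1} makes $Y\cong F^{r}$ an honest affine chart with no constraint on $\psi$, which is the case I would settle first), and, more seriously, to the situation where the matroid of $\psi$ properly enlarges that of $\psi_{0}$. In that situation one must first enlarge the matroid of $\varphi_{0}$ on $E$ to one whose restriction to $A$ is the matroid of $\psi$ (possible because free extension of a matroid does not change its restriction), then show that this enlargement is realized by a Grassmann--Pl\"ucker function close to $\varphi_{0}$ --- the real crux, with Proposition~\ref{prop:glue} governing which enlargements are admissible --- and finally adjust by the rescaling argument to land exactly on $\psi$. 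The general case then follows from the one-element reduction above.
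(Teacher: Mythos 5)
Part 1 of your proposal is correct and is essentially the paper's own argument: continuity of the coordinate projection, openness of $F^{\times}$, and Proposition~\ref{open_map}, intersected with the Grassmannian.

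Part 2 is where you and the paper part ways, and the comparison is instructive. The paper's proof is three sentences long: the coordinate projection $\{\varphi:\varphi(B)\neq 0\}\to\{\psi:\psi(B)\neq 0\}$ is open, its projectivization is open, and the restriction map on Grassmannians ``is just a restriction of this map.'' You, by contrast, set out to build local sections landing in the space of Grassmann--Pl\"ucker functions, precisely because a restriction $f|_{X_{0}}\colon X_{0}\to Y_{0}$ of an open map need not be open --- that inference is safe when $X_{0}=f^{-1}(Y_{0})$, but here a non-GP $\varphi$ on $E$ can certainly restrict to a GP $\psi$ on $A$, so $\operatorname{Gr}^{*}(r,F^{E})_{B}$ is strictly smaller than the saturation of $\operatorname{Gr}^{*}(r,F^{A})_{B}$. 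Your instinct that there is genuine work to do here, which the paper's proof does not visibly engage, is sound: one really does need that every $[\psi]$ close to $[\psi_{0}]$ admits a GP extension $[\varphi]$ close to $[\varphi_{0}]$.

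However, your sketch does not close that gap. The rescaling of Lemma~\ref{lemma:_prescribe} is tailored to $(r+1)$-element $A$: the formula $\varphi=D^{-1}\bigl(\bigodot_{i}\lambda_{e_{i}}\bigr)\varphi_{0}$ has one free unit per element of $E$, which for $|A|=r+1$ (where each basis of $M(A)$ is $A$ minus one element) is exactly enough to hit an arbitrary prescribed alternating $\tilde\varphi$ on $A^{r}$; for general $A$ a GP function $\psi$ with the same support as $\psi_{0}$ is in general not a coordinatewise rescaling of $\psi_{0}$ by such $\lambda$'s, so ``generalizing the construction to an arbitrary $A$'' is not the straightforward extension you suggest. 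Your one-element reduction $E=A\cup\{e\}$ shrinks $E$, not $A$, so it does not restore the $(r+1)$ situation. And the ``support-jump'' case --- which, as you correctly note, is forced whenever $\{0\}$ is not open --- is where you explicitly stop: the free-extension plan would need not merely that some GP extension of $\psi$ exists on $E$, but that one exists inside the given basic neighborhood of $\varphi_{0}$, and you label this ``the real crux'' without resolving it (nor does Proposition~\ref{prop:glue} supply the needed statement). So the proposal surfaces a real subtlety that the paper's terse proof glides past, but it is not itself a complete argument; the hard part you identify remains open in your write-up.
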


\begin{proof}
1. Since $F \backslash \{0\}$ is open, $\{\varphi \in F^{E^{r}} :
\varphi (B) \neq 0\}$ is open in $F^{E^{r}}$. Proposition~\ref{open_map} says that $F^{E^{r}} \backslash \{0\} \to \mathbf{P}(F
^{E^{r}})$ is an open map; thus $\{[\varphi ] \in \mathbf{P}(F^{E^{r}})
: \varphi (B) \neq 0\}$ is open in $\mathbf{P}(F^{E^{r}})$. The result
follows by intersecting with the Grassmannian.

2. Projection maps are open, and so the map $\{\varphi \in F^{E^{r}}:
\varphi (B) \neq 0\}\to \{\varphi \in F^{A^{r}}: \varphi (B) \neq 0\}$
is open. Thus the map $\{[\varphi ]\in \mathbf{P}(F^{E^{r}}):\varphi
(B)\neq 0\}\to \{[\varphi ]\in \mathbf{P}(F^{A^{r}}): \varphi (B)
\neq 0\}$ is also open. The restriction map $\operatorname{Gr}^{*}(r,
F^{E})_{B}\to \operatorname{Gr}^{*}(r, F^{A})_{B}$ is just a restriction
of this map.
\end{proof}

Now we can prove Proposition~\ref{prop:0open}. Consider a (strong or
weak) $F$-matroid $M$ with Grassmann--Pl\"{u}cker function $\varphi :E
^{r}\to F$. A subset $A$ of $E$ is said to have rank $r$ if the
restriction of $\varphi $ to $A^{r}$ is nonzero. In this case this
restriction is the Grassmann--Pl\"{u}cker function of a (strong
resp. weak) $F$-matroid on $A$, denoted $M(A)$ and called the
restriction of $M$ to $A$. (Aside: restriction can also be defined for
subsets $A$ of $E$ of smaller rank. Such a definition, given in terms
of circuits rather than Grassmann--Pl\"{u}cker functions, appears
in~\cite{Baker-Bowler}, and its equivalence with our definition
can be pieced together from several results
in~\cite{Baker-Bowler}. A more direct explanation of the
equivalence of circuit and Grassmann--Pl\"{u}cker definitions of
restriction is given by Proposition~3.4 in~\cite{EJS}.)

\begin{proof}[Proof of Proposition~\ref{prop:0open}]
If $M$ is a rank $r$ matroid on elements $E$ and $\mathcal B$ is the set of
ordered bases of $M$ then
\begin{equation*}
\operatorname{Real}^{*}_{F}(M)=\operatorname{Gr}^{*}(r, F^{E}) \cap
\bigcap _{B\in {\mathcal B}} \{[\varphi ] : \varphi (B)\neq 0\}
\cap
\bigcap _{B\notin {\mathcal B}} \{[\varphi ]: \varphi (B) = 0\}.
\end{equation*}
Since $F^{\times }$ is open in $F$, the implication (a)$\Rightarrow $(b)
is clear. The equivalence of (b) and (c) is also clear.

To show that (c)$\Rightarrow $(d) it suffices to show that for any
hyperfield $F$, there is a nonuniform matroid $M$ with $
\operatorname{Real}_{F}^{*}(M)$ nonempty. For any hyperfield $F$, an
element of $\operatorname{Gr}^{*}(1,F^{2})$ is given by the
Grassmann--Pl\"{u}cker relation $\varphi $ with $\varphi (1) = 1$ and
$\varphi (2) = 0$. Then $M = \operatorname{Gr}^{*}(\kappa ) [\varphi
]$ is a nonuniform matroid. $\operatorname{Real}_{F}^{*}(M)$ contains
$[\varphi ]$ and hence is nonempty.

To see (d)$\Rightarrow $(a), consider a nonuniform rank $r$ matroid
$M$ such that $\operatorname{Real}^{*}_{F}(M)$ is open in $
\operatorname{Gr}^{*}(r, F^{E})$ and is nonempty. Since $M$ is not
uniform, there is an $(r+1)$-element subset $A = \{a_{1}, \dots , a
_{r+1}\}$ of $M$ such that $A$ contains a basis but not every
$r$-element subset of $A$ is a basis. Thus $M(A)$ is not uniform. Let
$B$ be an ordered basis of $M(A)$.

Consider the commutative diagram
\begin{equation*}
\begin{tikzcd}
\operatorname{Real} ^*_F(M) \arrow[d,"\rho _M"] \arrow[r,hook] & \operatorname{Gr} ^*(r,F^E)_B
\arrow[d,"\rho _B"] \arrow[r,hook] & \operatorname{Gr} ^*(r,F^E) \\
\operatorname{Real} ^*_F(M(A)) \arrow[r,hook] & \operatorname{Gr} ^*(r,F^A)_B \arrow[r,hook] &
\operatorname{Gr} ^*(r,F^A)
\end{tikzcd}
\end{equation*}
where the vertical maps are the restriction maps. Since $
\operatorname{Real}^{*}_{F}(M)$ is open in $\operatorname{Gr}^{*}(r,F
^{E})$, Lemma~\ref{lem:open} shows that $\rho _{B}(\operatorname{Real}
^{*}_{F}(M))$ is open in $\operatorname{Gr}^{*}(r,F^{A})_{B}$, and hence
also in $\operatorname{Gr}^{*}(r,F^{A})$. Lemma~\ref{lemma:_prescribe}
implies that $\rho _{M}$ is surjective. Thus $\operatorname{Real}_{F}
^{*}(M(A))$ is open in $\operatorname{Gr}^{*}(r, F^{A})$.

Now consider the identification of $\operatorname{Gr}(r, F^{A})$ with
$F\mathbf{P}^{r}$ given by Lemma~\ref{lem:rplus1}. Under this
identification $\operatorname{Real}_{F}^{*}(M(A))$ is identified with
\begin{equation*}
\{[\vec{x}] \in F\mathbf{P}^{r} : x_{i} = 0
\text{ if and only if $A-\{a_{i}\}$ is not a basis of $M(A)$}\}.
\end{equation*}
Since $M(A)$ is not uniform, this set is open in $F\mathbf{P}^{r}$ if
and only if $\{0\}$ is open in $F$.
\end{proof}

\begin{remark}
A recent paper of Emanuele Delucchi, Linard Hoessly and Elia Saini
\cite{Delucchi-Saini} examines realization spaces in more depth,
including issues specific to hyperfields with various algebraic
properties.
\end{remark}

\section{Poset hyperfields}%
\label{sec:posets}

A poset $P$ can be given the \emph{upper order ideal topology}, or
\emph{poset topology}, which is the topology generated by sets of the
form $U_{p}=\{x\in P: x\geq p\}$, where $p$ is an element of~$P$. Note
that the partial order can be recovered from this topology:
$U_{p}$ is the intersection of all open sets containing $p$, and for any
$p,q\in P$, we have $p\leq q$ if and only if $U_{q}\subseteq U_{p}$.
Topological spaces given by posets are precisely the topological spaces
which are both $T_{0}$ (given any two points, there is an open set
containing exactly one of the points) and Alexandrov (arbitrary
intersections of open sets are open).

If we give $\mathbb{K}$ the partial order in which $1>0$, then the upper
order ideal topology coincides with the 0-coarse topology discussed
earlier. Likewise, if we give $\mathbb{S}$ the partial order with
$+>0$, $->0$, and $+$ incomparable to $-$, then the upper order ideal
topology coincides with the topology on $\mathbb{S}$ already introduced.
More generally, if $F$ is a hyperfield with no endowed topology, we can
give $F$ a partial order by making $0$ the unique minimum and all other
elements incomparable: the resulting poset topology coincides with
$F_{0}$. This partial order induces a partial order on $
\operatorname{Gr}^{*}(r, F^{n})$ in which $[\varphi ]\leq [\varphi ']$
if and only if $\varphi (i_{1},\ldots ,i_{r})\leq \varphi '(i_{1},
\ldots , i_{r})$ for every $i_{1}, \ldots , i_{r}$. In other words,
$[\varphi ]\leq [\varphi ']$ if and only if the 0 set of $\varphi $
contains the 0 set of $\varphi '$; thus the partial order on
$\operatorname{Gr}^{*}(r, F^{n})$ is independent of the choice of
$\varphi $ and $\varphi '$. The poset topology on $\operatorname{Gr}
^{*}(r,F^{n})$ and the Grassmann hyperfield topology on $
\operatorname{Gr}^{*}(r,F_{0})$ coincide.

For matroids ($F = \mathbb{K}$) and oriented matroids ($F =
\mathbb{S}$), the partial order is the well-known \emph{weak map} partial
order. The poset $\operatorname{Gr}(r, \mathbb{S}^{n})$ is known as the
\emph{MacPhersonian} and is denoted $\mathrm{MacP}(r,n)$. Thus we have
an identification of topological spaces
\begin{equation*}
\mathrm{MacP}(r,n) = \operatorname{Gr}(r, \mathbb{S}^{n}).
\end{equation*}

The poset $\operatorname{Gr}(r, \mathbb{K}^{n})$ is contractible: we can
see this by noting:
\begin{enumerate}%
\item
$\operatorname{Gr}(r, \mathbb{K}^{n})$ has a unique maximal element,
given by the uniform rank $r$ matroid on $[n]$, and
\item
If $P$ is a poset with a unique maximum element $\hat{1}$, then the
function $H:P\times I\to P$ given by
\begin{equation*}
H(x,t)=\left \{
\begin{array}{l@{\quad }l}
x&\mbox{ if $t=0$}
\\
\hat{1}&\mbox{ if $t>0$}
\end{array}
\right .
\end{equation*}
is a deformation retract of $P$ to $\{\hat{1}\}$.
\end{enumerate}

There is a second topological space we can associate to a poset
$(P,\leq )$: the order complex $\|P\|$. This is the geometric
realization of the simplicial complex whose vertices are the elements
of $P$ and whose $k$-simplices are the chains $p_{0} < p_{1} < \dots
< p_{k}$ of elements of $P$. The function $\varphi _{P}: \|P\| \to P$
given by sending points in the interior of the simplex spanned by
$p_{0} < p_{1} < \dots < p_{k}$ to $p_{k}$ is a continuous function when
$P$ is given the poset topology.

McCord \cite{mccord} proved the following amazing theorem.

\begin{theorem}
For any poset $P$, the map $\varphi _{P} : \|P\| \to P$ is a weak
homotopy equivalence.
\end{theorem}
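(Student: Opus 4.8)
The plan is a local-to-global argument over a basis for the Alexandrov (poset) topology on $P$. For $p\in P$ write $U_p=\{q\in P:q\ge p\}$; these minimal open sets form a basis of $P$ (every up-set is a union of them), and the subspace topology on each $U_p$ is the upper order ideal topology of the subposet $U_p$. The key external input is a gluing principle for weak homotopy equivalences, essentially McCord's Theorem~6 in~\cite{mccord}: if $f\colon X\to Y$ is continuous and $\mathcal B$ is a basis of open sets of $Y$ such that $f^{-1}(B)\to B$ is a weak homotopy equivalence for every $B\in\mathcal B$, then $f$ is a weak homotopy equivalence. We genuinely need the ``basis'' form, not the ``cover closed under finite intersections'' form, since $U_p\cap U_q$ need not have a least element and its order complex can be, for instance, a pair of points. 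Granting this principle, and recalling that $\varphi_P$ is continuous, it suffices to prove that $\varphi_P\colon\varphi_P^{-1}(U_p)\to U_p$ is a weak homotopy equivalence for every $p\in P$; as both spaces are nonempty, it is enough to show both are contractible, because any map between nonempty contractible spaces is a weak homotopy equivalence.

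Contractibility of $U_p$ is immediate: $U_p$ has least element $p$, so the function $H\colon U_p\times I\to U_p$ with $H(x,0)=p$ for all $x$ and $H(x,t)=x$ for all $t>0$ is a continuous contraction of $U_p$ (continuity being a routine check on preimages of the basic open sets $U_q\cap U_p$, in the spirit of the deformation retraction recalled earlier in this section for a poset with a greatest element).

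Contractibility of $\varphi_P^{-1}(U_p)$: from the definition of $\varphi_P$, a point of $\|P\|$ lies in $\varphi_P^{-1}(U_p)$ exactly when the maximal vertex of its carrier simplex is $\ge p$, equivalently when its carrier has some vertex in $U_p$, equivalently when some vertex lying in $U_p$ has positive barycentric coordinate. Thus $\varphi_P^{-1}(U_p)$ is precisely the open star in $\|P\|$ of the subcomplex $\|U_p\|$, the order complex of the subposet $U_p$. The open star of a subcomplex deformation retracts onto that subcomplex, and $\|U_p\|$ is contractible: since $p$ is the least element of $U_p$ it is comparable to every element of $U_p$, so the straight-line homotopy $x\mapsto(1-t)x+tp$ stays inside $\|U_p\|$ and contracts it to the vertex $p$. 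Hence $\varphi_P^{-1}(U_p)$ is contractible.

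Putting these together, the gluing principle applied to $\varphi_P$ with the basis $\{U_p\}_{p\in P}$ gives that $\varphi_P\colon\|P\|\to P$ is a weak homotopy equivalence. The main obstacle is the gluing principle itself, whose proof is the substantive part: one represents elements of the homotopy groups by maps out of compact polyhedra, triangulates finely enough (via a Lebesgue number) that each closed simplex maps into a single basic open set, and patches together the resulting local lifts and null-homotopies — the step in which one shrinks within the basis, rather than intersecting members of a cover, is precisely what makes the ``basis'' hypothesis the correct one. I would cite this rather than reprove it. The only other points needing care, both routine, are the continuity of the contraction of $U_p$ and the standard fact that an open star deformation retracts onto its subcomplex.
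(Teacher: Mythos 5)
The paper does not prove this statement; it is attributed to McCord \cite{mccord} and used as a black box, so there is no internal proof to compare your attempt against. Your reconstruction is correct, and it is essentially McCord's own argument. You correctly reduce, via the gluing theorem for weak homotopy equivalences over a basis-like open cover (Theorem~6 of \cite{mccord}), to showing that each $U_p$ and each $\varphi_P^{-1}(U_p)$ is contractible. Your contraction of $U_p$ is continuous --- the check on a basic open $U_q\cap U_p$ works just as in the paper's own verification that a poset with a greatest element contracts to it --- and your identification of $\varphi_P^{-1}(U_p)$ with the union of open stars of the vertices in $U_p$, followed by the retraction onto $\|U_p\|$ and the cone contraction of $\|U_p\|$ to the vertex $p$, is all in order. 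The one place that deserves a word of justification is the claim that ``the open star of a subcomplex deformation retracts onto that subcomplex'': this is true for a \emph{full} subcomplex, and $\|U_p\|$ is full in $\|P\|$ precisely because $U_p$ is an up-set, so a chain of $P$ has all its vertices in $U_p$ as soon as its least element is $\geq p$. Citing rather than reproving McCord's gluing theorem is the right call; that theorem is the genuinely technical input, and the rest of the argument, as you give it, is a clean reduction to two contractibility checks.
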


In particular, McCord's theorem implies that every finite simplicial
complex has the weak homotopy type of its poset of simplices, giving
a close connection between the topology of finite complexes and finite
topological spaces. Recall that a \emph{weak homotopy equivalence} is a
continuous function $f : X \to Y$ such that
\begin{equation*}
f_{*} : \pi _{n}(X,x_{0}) \to \pi _{n}(Y,f(x_{0}))
\end{equation*}
is a bijection for all $n \geq 0$ and all $x_{0} \in X$. By the Hurewicz
Theorem, a weak homotopy equivalence induces isomorphisms on homology
groups and thus on the cohomology ring.

\begin{remark}
\label{mccordremark}
If $Z$ is a CW-complex and $f : X \to Y$ is a weak homotopy equivalence,
then $f_{*} : [Z,X] \to [Z,Y]$ is a bijection, where $f_{*}(g) = f
\circ g$ and $[-,-]$ denotes homotopy classes of maps.
\end{remark}

McCord's theorem and the above identification of $\mathrm{MacP}(r,n)$
with $\operatorname{Gr}(r,\mathbb{S}^{n})$ give the first part of
Theorem~\ref{cohomology}. In Section~\ref{sec:macphersonian}, we will
review what is known about the homotopy type of $\|\mathrm{MacP}(r,n)
\|$.

\section{The MacPhersonian and hyperfields}%
\label{sec:macphersonian}

In this section we interpret our previous work
\cite{Anderson-Davis} on the relationship between the Grassmannian and
the MacPhersonian in light of the continuous hyperfield homomorphism
$\mathbb{R}\to \mathbb{S}$. As noted before, $\operatorname{MacP}(r,n)$
with the poset topology is just $\operatorname{Gr}(r,\mathbb{S}^{n})$
with the topology induced by the topological hyperfield $\mathbb{S}$.
The homomorphisms of topological hyperfields
\begin{equation*}
\mathbb{R}\to \mathbb{S}\to \mathcal{T }\mathbb{R}_{0}
\end{equation*}
induce continuous maps
\begin{equation*}
\operatorname{Gr}(r,\mathbb{R}^{n}) \xrightarrow{\mu }
\operatorname{Gr}(r,\mathbb{S}^{n}) \to \operatorname{Gr}(r,\mathcal{T
}\mathbb{R}_{0}^{n}).
\end{equation*}
By Corollary~\ref{cor:grassmannhomotopy} we know that the second map is
a homotopy equivalence.

Here is the main theorem of \cite{Anderson-Davis}, whose proof
involves construction of a universal matroid spherical quasifibration
over $\|\mathrm{MacP}(r,n)\|$ and its Stiefel--Whitney classes.

\begin{theorem}[\cite{Anderson-Davis}]
\label{AD}
There is a continuous map ${\widetilde{\mu }}: \operatorname{Gr}(r,
\mathbb{R}^{\infty }) \to \|\operatorname{MacP}(r,\infty )\|$ such that
\begin{enumerate}[\textit{2.}]%
\item[\textit{1.}]
${\widetilde{\mu }}^{*} : H^{*}(\|\mathrm{MacP}(r,\infty )\|;
\mathbb{F}_{2}) \to H^{*}(\operatorname{Gr}(r,\mathbb{R}^{\infty });
\mathbb{F}_{2})$ is a split epimorphism of graded rings.
\item[\textit{2.}]
The diagram below commutes
\begin{equation*}
\begin{tikzcd}
&\|\operatorname{MacP} (r,\infty )\| \arrow[d, "\varphi _{\operatorname{MacP} (r,\infty )}"]\\
\operatorname{Gr} (r,\mathbb R^\infty ) \arrow[r, "\mu "] \arrow[ru,
"{\widetilde{\mu }}"] & \operatorname{MacP} (r, \infty ).
\end{tikzcd}
\end{equation*}
\end{enumerate}
\end{theorem}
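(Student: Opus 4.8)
\emph{Proof proposal.} The plan is to resolve the continuous map $\mu$, which is $\operatorname{Gr}$ applied to $\operatorname{ph}\colon\mathbb{R}\to\mathbb{S}$ and which partitions $\operatorname{Gr}(r,\mathbb{R}^{n})$ into the realization spaces $\operatorname{Real}_{\mathbb{R}}(M)=\mu^{-1}(M)$, into a map $\widetilde\mu$ to the order complex that records, for a subspace $V$, not merely its oriented matroid $M(V)=\mu(V)$ but also the chain of weak-map-smaller oriented matroids into whose realization spaces $V$ is limiting. I would first work over each finite $\mathbb{R}^{n}$: following MacPherson's program, equip $\operatorname{Gr}(r,\mathbb{R}^{n})$ with a cell (or handle) structure combinatorially transverse to the stratification $\{\operatorname{Real}_{\mathbb{R}}(M)\}$ and compatible with the weak-map order, and use it to assign to each $V$ a point $\widetilde\mu(V)\in\|\operatorname{MacP}(r,n)\|$ carried by a chain $M_{0}<M_{1}<\dots<M_{k}=M(V)$, with barycentric coordinates measuring how deeply $V$ sits in the successive strata. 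Continuity of $\widetilde\mu$ and the on-the-nose identity $\varphi_{\operatorname{MacP}(r,n)}\circ\widetilde\mu=\mu$ (the carrying simplex has top element exactly $M(V)$) are arranged by the construction; passing to the colimit over $n$ gives $\widetilde\mu\colon\operatorname{Gr}(r,\mathbb{R}^{\infty})\to\|\operatorname{MacP}(r,\infty)\|$, which is part~2.

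For part~1 I would build a \emph{universal matroid spherical quasifibration} $\xi$ over $\|\operatorname{MacP}(r,\infty)\|$ mimicking the unit sphere bundle $S(\gamma^{r})$ of the tautological $r$-plane bundle $\gamma^{r}$ over $\operatorname{Gr}(r,\mathbb{R}^{\infty})$: over the open simplex carried by a chain $M_{0}<\dots<M_{k}$ one places the Folkman--Lawrence covector sphere $S^{r-1}$ of the top element $M_{k}$, and glues these across faces using the collapse maps of covector spheres induced by the weak images of $M_{k}$. The key lemma to prove is that the resulting total space is a quasifibration over $\|\operatorname{MacP}(r,n)\|$ with fibers homotopy equivalent to $S^{r-1}$; I would verify Dold's gluing criterion on the cover by open vertex stars, the crucial input being that a weak map of rank-$r$ oriented matroids induces a sufficiently well-behaved map of covector spheres (in particular an $\mathbb{F}_{2}$-homology isomorphism). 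A rank-$r$ spherical quasifibration carries a mod~$2$ Thom class and Thom isomorphism, hence Stiefel--Whitney classes $w_{i}(\xi)\in H^{i}(\|\operatorname{MacP}(r,n)\|;\mathbb{F}_{2})$ for $1\le i\le r$; since the stabilization $\operatorname{MacP}(r,n)\hookrightarrow\operatorname{MacP}(r,n+1)$ adjoins a loop and does not change covector spheres, these restrict compatibly and define $w_{i}(\xi)\in H^{i}(\|\operatorname{MacP}(r,\infty)\|;\mathbb{F}_{2})$.

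Next I would identify $\widetilde\mu^{\,*}\xi$ with $S(\gamma^{r})$ up to fiber homotopy equivalence. The point is that $S(\gamma^{r})$ already carries, stratum by stratum, precisely the data encoded by $\xi$: for $V\in\operatorname{Real}_{\mathbb{R}}(M)$ the componentwise sign map $S(V)\to\{-,0,+\}^{[n]}$ exhibits $S(V)\cong S^{r-1}$ as the covector sphere of $M$, and $\widetilde\mu$ was constructed to be exactly the gluing datum patching these stratumwise identifications together globally. Granting this, $\widetilde\mu^{\,*}w_{i}(\xi)=w_{i}(\gamma^{r})$ for all $i$. Since $H^{*}(\operatorname{Gr}(r,\mathbb{R}^{\infty});\mathbb{F}_{2})$ is the polynomial ring $\mathbb{F}_{2}[w_{1}(\gamma^{r}),\dots,w_{r}(\gamma^{r})]$, the map $\widetilde\mu^{\,*}$ is surjective, and the ring homomorphism $\sigma\colon H^{*}(\operatorname{Gr}(r,\mathbb{R}^{\infty});\mathbb{F}_{2})\to H^{*}(\|\operatorname{MacP}(r,\infty)\|;\mathbb{F}_{2})$ determined by $w_{i}(\gamma^{r})\mapsto w_{i}(\xi)$ satisfies $\widetilde\mu^{\,*}\circ\sigma=\operatorname{id}$ (two ring endomorphisms of a free polynomial ring agreeing on generators coincide); hence $\widetilde\mu^{\,*}$ is a split epimorphism of graded rings.

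The hard part is the pair of geometric constructions in the first two paragraphs: producing $\widetilde\mu$ as a genuine continuous lift of $\mu$ with carrying simplex of the prescribed top element (the MacPherson-style combinatorial transversality, which is delicate precisely because the realization spaces themselves can have arbitrarily bad topology), and proving that the glued covector spheres assemble into a quasifibration with the correct fiber and admit Stiefel--Whitney classes matching those of $S(\gamma^{r})$. Once those are in hand, the cohomological conclusion is formal.
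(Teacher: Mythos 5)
This theorem is not proved in the paper under review: it is quoted verbatim from \cite{Anderson-Davis}, and the only indication of proof the paper gives is the one-line gloss immediately preceding the statement, that the proof ``involves construction of a universal matroid spherical quasifibration over $\|\mathrm{MacP}(r,n)\|$ and its Stiefel--Whitney classes.'' Your proposal correctly identifies this quasifibration and its Stiefel--Whitney classes as the engine for part~1, and your cohomological endgame (pull $\xi$ back along $\widetilde\mu$ to $S(\gamma^{r})$, observe that the Stiefel--Whitney classes hit the polynomial generators, and use freeness of $\mathbb{F}_{2}[w_{1},\dots,w_{r}]$ to manufacture the splitting) is exactly the argument one would expect; so at the level of detail the paper itself offers, you are on the same track.

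Two points of friction worth noting. First, the paper's own remark immediately after the theorem observes that a lift $\widetilde\mu$ making the triangle commute \emph{up to homotopy} exists formally by McCord's theorem (Remark~\ref{mccordremark}), since $\varphi_{\MacP}$ is a weak homotopy equivalence and the domain is a CW complex; the MacPherson-style combinatorial transversality you propose is aimed at the stronger on-the-nose commutativity asserted in part~2 and is genuinely delicate, so it would be worth deciding up front which version you actually need downstream. Second, the ``crucial input'' you name --- that a weak map of rank-$r$ oriented matroids induces a map of Folkman--Lawrence covector spheres well-behaved enough for Dold's quasifibration criterion --- is precisely the technical heart of \cite{Anderson-Davis} and is \emph{not} something the topological representation theorem hands you for free: weak maps do not in general induce continuous maps of pseudosphere arrangements, and making sense of the gluing across faces of $\|\MacP(r,n)\|$ is where essentially all of the work lives. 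You flag this yourself as ``the hard part,'' which is honest, but the present paper offers no assistance with it, so a proof along your lines would have to reproduce that construction from the cited reference.
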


The existence of a map ${\widetilde{\mu }}$ making the above triangle
commute up to homotopy follows from Remark~\ref{mccordremark}.

The following corollary finishes the proof of Theorem~\ref{cohomology}.

\begin{corollary}
Let $\operatorname{ph}: \mathbb{R}\to \mathbb{S}$ be the morphism of
topological hyperfields given by the phase map $x \mapsto x/|x|$ for
$x \neq 0$.
\begin{enumerate}[\textit{2.}]%
\item[\textit{1.}]
$\operatorname{Gr}(\operatorname{ph})^{*} : H^{*}(\operatorname{Gr}(r,
\mathbb{S}^{\infty });\mathbb{F}_{2}) \to H^{*}(\operatorname{Gr}(r,
\mathbb{R}^{\infty });\mathbb{F}_{2}) $ is a split epimorphism of graded
rings.
\item[\textit{2.}]
$\operatorname{Gr}(\operatorname{ph})^{*} : H^{*}(\operatorname{Gr}(r,
\mathbb{S}^{n});\mathbb{F}_{2}) \to H^{*}(\operatorname{Gr}(r,
\mathbb{R}^{n});\mathbb{F}_{2}) $ is a epimorphism of graded rings.
\end{enumerate}
\end{corollary}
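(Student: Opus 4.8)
The plan is to derive both statements by combining Theorem~\ref{AD} (the main theorem of \cite{Anderson-Davis}) with McCord's theorem and the classical description of the mod~$2$ cohomology of real Grassmannians; essentially all the substantive work is already packaged in Theorem~\ref{AD}. Two identifications sit in the background: the map $\mu$ appearing in Theorem~\ref{AD} is $\operatorname{Gr}(\operatorname{ph})$, and its target $\operatorname{MacP}(r,m)$ is, as a topological space, the Grassmannian $\operatorname{Gr}(r,\mathbb{S}^{m})$ (for $m=n$ and for $m=\infty$).

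For part~1, I would first use Theorem~\ref{AD}(2) --- or, with ``commutes'' weakened to ``commutes up to homotopy'', Remark~\ref{mccordremark} --- to get $\varphi_{\operatorname{MacP}(r,\infty)}\circ\widetilde{\mu}\simeq\operatorname{Gr}(\operatorname{ph})$, hence on $\mathbb{F}_{2}$-cohomology $\operatorname{Gr}(\operatorname{ph})^{*}=\widetilde{\mu}^{*}\circ\varphi_{\operatorname{MacP}(r,\infty)}^{*}$ as maps $H^{*}(\operatorname{Gr}(r,\mathbb{S}^{\infty});\mathbb{F}_{2})\to H^{*}(\operatorname{Gr}(r,\mathbb{R}^{\infty});\mathbb{F}_{2})$. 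By McCord's theorem $\varphi_{\operatorname{MacP}(r,\infty)}$ is a weak homotopy equivalence, so $\varphi_{\operatorname{MacP}(r,\infty)}^{*}$ is an isomorphism of graded rings; by Theorem~\ref{AD}(1), $\widetilde{\mu}^{*}$ is a split epimorphism of graded rings. Since precomposing a split epimorphism of graded rings with a graded-ring isomorphism again yields a split epimorphism of graded rings (if $s$ is a graded-ring section of $\widetilde{\mu}^{*}$, then $(\varphi_{\operatorname{MacP}(r,\infty)}^{*})^{-1}\circ s$ is one for $\operatorname{Gr}(\operatorname{ph})^{*}$), part~1 follows.

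For part~2 I would compare the levels $n$ and $\infty$. The stabilization inclusions $\operatorname{Gr}(r,F^{n})\hookrightarrow\operatorname{Gr}(r,F^{\infty})$ are compatible with $\operatorname{Gr}(\operatorname{ph})$ because $\operatorname{ph}(0)=0$, so applying $H^{*}(-;\mathbb{F}_{2})$ to the resulting commutative square of spaces gives the commutative square
\begin{equation*}
\begin{tikzcd}
H^{*}(\operatorname{Gr}(r,\mathbb{S}^{\infty});\mathbb{F}_{2}) \arrow[r] \arrow[d, "\operatorname{Gr}(\operatorname{ph})^{*}"'] & H^{*}(\operatorname{Gr}(r,\mathbb{S}^{n});\mathbb{F}_{2}) \arrow[d, "\operatorname{Gr}(\operatorname{ph})^{*}"] \\
H^{*}(\operatorname{Gr}(r,\mathbb{R}^{\infty});\mathbb{F}_{2}) \arrow[r] & H^{*}(\operatorname{Gr}(r,\mathbb{R}^{n});\mathbb{F}_{2})
\end{tikzcd}
\end{equation*}
whose horizontal maps are the restriction homomorphisms. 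The left vertical map is surjective by part~1, and the bottom horizontal map is surjective because $H^{*}(\operatorname{Gr}(r,\mathbb{R}^{n});\mathbb{F}_{2})$ is generated as a ring by the Stiefel--Whitney classes $w_{1},\dots,w_{r}$ of the tautological $r$-plane bundle, each of which is the restriction of the corresponding universal class over $\operatorname{Gr}(r,\mathbb{R}^{\infty})$. Hence the composite from the top-left corner down the left edge and then along the bottom is surjective; by commutativity this composite factors through the right vertical map, so $\operatorname{Gr}(\operatorname{ph})^{*}\colon H^{*}(\operatorname{Gr}(r,\mathbb{S}^{n});\mathbb{F}_{2})\to H^{*}(\operatorname{Gr}(r,\mathbb{R}^{n});\mathbb{F}_{2})$ is surjective, which is part~2.

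I do not expect a serious obstacle: Theorem~\ref{AD} does the heavy lifting, its proof in \cite{Anderson-Davis} constructing a universal matroid spherical quasifibration over $\|\operatorname{MacP}(r,\infty)\|$ and computing its Stiefel--Whitney classes. The points requiring care are bookkeeping --- checking that the triangle of Theorem~\ref{AD} and the stabilization square genuinely commute (which follows from functoriality of $\operatorname{Gr}(-)$ in the hyperfield variable together with the identification of topological spaces $\operatorname{MacP}(r,m)=\operatorname{Gr}(r,\mathbb{S}^{m})$), invoking McCord's theorem and the Hurewicz theorem to see $\varphi_{\operatorname{MacP}(r,\infty)}^{*}$ is a ring isomorphism on $\mathbb{F}_{2}$-cohomology, and quoting the standard surjectivity of the restriction map in mod~$2$ cohomology for real Grassmannians --- together with the elementary observation that ``split epimorphism of graded rings'' is closed under precomposition with graded-ring isomorphisms.
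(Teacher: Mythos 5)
Your proposal is correct and follows essentially the same route as the paper: Part~1 by combining Theorem~\ref{AD}, McCord's theorem, and the identification $\operatorname{MacP}(r,\infty)=\operatorname{Gr}(r,\mathbb{S}^{\infty})$, and Part~2 via the stabilization square, using that the restriction $H^{*}(\operatorname{Gr}(r,\mathbb{R}^{\infty});\mathbb{F}_{2})\to H^{*}(\operatorname{Gr}(r,\mathbb{R}^{n});\mathbb{F}_{2})$ is onto. The only cosmetic difference is that you justify that last surjectivity by noting the finite Grassmannian's cohomology is generated by Stiefel--Whitney classes (Problem 7-B of Milnor--Stasheff), while the paper cites the vanishing of the mod~$2$ Schubert-cell differentials with $\operatorname{Gr}(r,\mathbb{R}^{n})$ a subcomplex; both are standard and equivalent.
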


\begin{proof}
Part 1 follows from the identification $\mathrm{MacP}(r,\infty ) =
\operatorname{Gr}(r,\mathbb{S}^{\infty })$, Theorem~\ref{AD}, and
McCord's theorem. Part 2 follows from considering the induced maps in
mod 2 cohomology from the following commutative diagram
%
\begin{equation}
\begin{CD}
\label{square}
\operatorname{Gr}(r,\mathbb{R}^{n}) @>\mu _{n}>> \operatorname{Gr}(r,
\mathbb{S}^{n})
\\
@VVi^{n}_{\mathbb{R}}V @VVi^{n}_{\mathbb{S}}V
\\
\operatorname{Gr}(r,\mathbb{R}^{\infty }) @>\mu _{\infty } >>
\operatorname{Gr}(r,\mathbb{S}^{\infty })
\end{CD}
\end{equation}
In particular the map $\mu _{\infty }$ induces a surjection on mod 2
cohomology by Theorem~\ref{AD} above. The map $i^{n}_{\mathbb{R}}$
induces a surjection on mod 2 cohomology since the mod 2 cellular chain
complex of $\operatorname{Gr}(r,\mathbb{R}^{\infty })$ has zero
differentials when the cell structure is given by Schubert cells (see
\cite{Milnor-Stasheff}), and $ \operatorname{Gr}(r,\mathbb{R}
^{n})$ is a subcomplex.
\end{proof}

Finally, we discuss the maps induced on mod 2 cohomology by square \eqref{square} because we feel that there are issues of combinatorial
interest. First, we don't know if the map $\mu _{\infty }$ induces an
injection on mod 2 cohomology; elements in the kernel would be exotic
characteristic classes for matroid bundles
\cite{Anderson-Davis}. Next we discuss the vertical maps. We will use
a result of \cite{Anderson:homotopy} which states that the
homotopy groups of $\|\operatorname{MacP}(r,n)\|$ are stable in~$n$;
more precisely that $\pi _{j}\| \operatorname{MacP}(r, n) \| \to \pi
_{j+1}\| \operatorname{MacP}(r, n+1)\|$ is an isomorphism for
$n \geq r(j+2)$.

\begin{proposition}
For any $j$ and $r$, there exists $n(j,r)$ such that for $n > n(j,r)$, the maps
\begin{align*}
H^{j}(\operatorname{Gr}(r,\mathbb{R}^{\infty });\mathbb{F}_{2})
&
\to H^{j}(\operatorname{Gr}(r,\mathbb{R}^{n});\mathbb{F}_{2})
\\
H^{j}(\operatorname{Gr}(r,\mathbb{S}^{\infty });\mathbb{F}_{2})
&
\to H^{j}(\operatorname{Gr}(r,\mathbb{S}^{n});\mathbb{F}_{2})
\end{align*}
are isomorphisms.
\end{proposition}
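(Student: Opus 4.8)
The plan is to handle the two families of Grassmannians separately, in each case bounding the connectivity of the inclusion of the $n$-stage into the colimit and then reading off the cohomology statement.

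For the real Grassmannians this is the classical Schubert-cell argument. Give $\operatorname{Gr}(r,\mathbb{R}^{N})$ the CW structure with one cell of dimension $|\lambda|$ for each Young diagram $\lambda$ fitting in an $r\times(N-r)$ box; then $\operatorname{Gr}(r,\mathbb{R}^{N})$ is exactly the subcomplex of $\operatorname{Gr}(r,\mathbb{R}^{N+1})$ on those cells, and every cell of $\operatorname{Gr}(r,\mathbb{R}^{N+1})$ not already in $\operatorname{Gr}(r,\mathbb{R}^{N})$ has a row of length $N+1-r$, hence dimension at least $N+1-r$. So $\operatorname{Gr}(r,\mathbb{R}^{N})\hookrightarrow\operatorname{Gr}(r,\mathbb{R}^{N+1})$ is $(N-r)$-connected, and, iterating and passing to the colimit, $\operatorname{Gr}(r,\mathbb{R}^{n})\hookrightarrow\operatorname{Gr}(r,\mathbb{R}^{\infty})$ is $(n-r)$-connected. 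A map that is $(n-r)$-connected is an isomorphism on $H^{i}(-;\mathbb{F}_{2})$ for $i\le n-r-1$, so the first map in the statement is an isomorphism whenever $n>j+r$.

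For the sign Grassmannians I would first pass to order complexes. By McCord's theorem the natural map $\|\operatorname{MacP}(r,n)\|\to\operatorname{MacP}(r,n)=\operatorname{Gr}(r,\mathbb{S}^{n})$ is a weak homotopy equivalence for each $n$, and since it is natural in the stabilization embeddings — which on the $\mathbb{S}$-side are closed inclusions, a loop of an oriented matroid remaining a loop under weak maps, so that the colimits are well behaved — the induced map $\|\operatorname{MacP}(r,\infty)\|\to\operatorname{Gr}(r,\mathbb{S}^{\infty})$ is again a weak homotopy equivalence; this is precisely the identification already used in establishing Theorem~\ref{cohomology}. It therefore suffices to show that $H^{j}(\|\operatorname{MacP}(r,\infty)\|;\mathbb{F}_{2})\to H^{j}(\|\operatorname{MacP}(r,n)\|;\mathbb{F}_{2})$ is an isomorphism for $n$ large. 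By the homotopy-stability result of~\cite{Anderson:homotopy} recalled above, for each fixed $i$ the stabilization maps $\pi_{i}\|\operatorname{MacP}(r,n)\|\to\pi_{i}\|\operatorname{MacP}(r,n+1)\|$ are isomorphisms once $n$ is large relative to $i$ and $r$; choosing $n$ past the stable range in all degrees $\le j+1$, the map $\|\operatorname{MacP}(r,n)\|\to\|\operatorname{MacP}(r,\infty)\|$ is then an isomorphism on $\pi_{i}$ for all $i\le j+1$, hence in particular $(j+1)$-connected and an isomorphism on $\pi_{1}$. By the relative Hurewicz theorem (valid here because $\pi_{1}$ is carried isomorphically) the pair $(\|\operatorname{MacP}(r,\infty)\|,\|\operatorname{MacP}(r,n)\|)$ has vanishing integral homology through degree $j+1$, so the inclusion induces an isomorphism on $H_{j}(-;\mathbb{F}_{2})$ and dually on $H^{j}(-;\mathbb{F}_{2})$. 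Taking $n(j,r)$ larger than both $j+r$ and the stable bound supplied by~\cite{Anderson:homotopy} completes the argument.

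The routine points are the Schubert-cell count and the colimit bookkeeping for the $\infty$-stages (the latter was in any case already needed for Theorem~\ref{cohomology}). The step I expect to require the most care is the passage from homotopy-group stability to the cohomology statement: since the spaces involved are not simply connected, one must first stabilize $\pi_{1}$ in order to apply the relative Hurewicz theorem, which is the reason for demanding stability in all degrees up to $j+1$ rather than just degree $j$.
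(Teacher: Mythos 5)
Your proof is correct and follows essentially the same route as the paper: Schubert cells in high dimension for the real Grassmannian, and for the sign Grassmannian the chain McCord $\Rightarrow$ homotopy stability of $\|\operatorname{MacP}(r,n)\|$ from \cite{Anderson:homotopy} $\Rightarrow$ relative Hurewicz $\Rightarrow$ universal coefficients. You are a bit more explicit than the paper about what is actually needed — iso on $\pi_i$ for all $i\le j+1$ rather than just $i=j$, and the compatibility of $\pi_1$ needed for the relative Hurewicz theorem — but these are clarifications of the paper's argument, not a different proof.
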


\begin{proof}
For the real Grassmannian, this is classical
\cite{Milnor-Stasheff}. Indeed, the CW-complex $\operatorname{Gr}(r,
\mathbb{R}^{\infty })$ is the union of the subcomplexes $
\operatorname{Gr}(r,\mathbb{R}^{n})$ and the dimension of every cell of
$\operatorname{Gr}(r,\mathbb{R}^{n+1}) - \operatorname{Gr}(r,
\mathbb{R}^{n})$ is greater than $n-r$. It thus follows from using
cellular cohomology.

For the sign hyperfield, the reasoning is more subtle. First, using
McCord's Theorem, it suffices to prove the result with $
\operatorname{Gr}(r,\mathbb{S}^{-})$ replaced by $\|
\operatorname{MacP}(r, -) \|$. Any map from a compact set to
$\| \operatorname{MacP}(r, \infty ) \|$ must land in $\|
\operatorname{MacP}(r, N) \|$ for some $N$. Thus the result of
\cite{Anderson:homotopy} shows that there is an $n(j,r)$ such that for
$n > n(j,r)$, the maps $\pi _{j} \| \operatorname{MacP}(r, n) \|
\to \pi _{j} \| \operatorname{MacP}(r, \infty ) \|$ is an isomorphism.
The Relative Hurewicz Theorem and Universal Coefficient Theorem imply
the same is true on mod 2 cohomology.
\end{proof}

For an arbitrary hyperfield $F$, we do not know, for example, whether
the map $H^{*}(\operatorname{Gr}(r,F^{\infty })) \to \text{lim}_{n
\to \infty } H^{*}(\operatorname{Gr}(r,F^{n}))$ is an isomorphism.

Similar considerations allow us to deduce the following from the main
results of~\cite{Anderson:homotopy}.
%
\begin{corollary}
\begin{enumerate}[\textit{2.}]%
\item[\textit{1.}]
$\operatorname{Gr}(\operatorname{ph})_{*}:\pi _{i}(\operatorname{Gr}(r,
\mathbb{R}^{n}))\to \pi _{i}(\operatorname{Gr}(r, \mathbb{S}^{n}))$ is
an isomorphism for $i\in \{0,1\}$ and a surjection for $i=2$.
\item[\textit{2.}]
The map $\pi _{i}(\operatorname{Gr}(r,\mathbb{S}^{n-1}))\to \pi _{i}(
\operatorname{Gr}(r,\mathbb{S}^{n}))$ induced by the stabilization
embedding is an isomorphism if $n>r(i+2)$ and a surjection if
$n>r(i+1)$.
\end{enumerate}
\end{corollary}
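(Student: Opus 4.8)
The plan is to transport both statements to the order complexes $\|\operatorname{MacP}(r,m)\|$ by means of McCord's theorem, apply the results of~\cite{Anderson:homotopy} there, and translate back via the identification $\operatorname{MacP}(r,m)=\operatorname{Gr}(r,\mathbb{S}^m)$. For part~1, observe first that $\operatorname{Gr}(\operatorname{ph})\colon\operatorname{Gr}(r,\mathbb{R}^n)\to\operatorname{Gr}(r,\mathbb{S}^n)$ is precisely the map $\mu=\mu_n$ of Theorem~\ref{AD} and the surrounding discussion, since $\operatorname{ph}\colon\mathbb{R}\to\mathbb{S}$ is the sign homomorphism. As $\operatorname{Gr}(r,\mathbb{R}^n)$ is a CW-complex, Remark~\ref{mccordremark} furnishes a map $\widetilde{\mu}_n\colon\operatorname{Gr}(r,\mathbb{R}^n)\to\|\operatorname{MacP}(r,n)\|$ with $\varphi_{\operatorname{MacP}(r,n)}\circ\widetilde{\mu}_n\simeq\mu_n$, and McCord's theorem says $\varphi_{\operatorname{MacP}(r,n)}$ is a weak homotopy equivalence; hence on homotopy groups $(\mu_n)_*$ and $(\widetilde{\mu}_n)_*$ have the same kernel and image. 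The relevant result of~\cite{Anderson:homotopy} is that $(\widetilde{\mu}_n)_*$ is an isomorphism on $\pi_0$ and $\pi_1$ and a surjection on $\pi_2$, which then gives part~1.

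For part~2, I would first check that the stabilization embedding $\operatorname{Gr}(r,\mathbb{S}^{n-1})\hookrightarrow\operatorname{Gr}(r,\mathbb{S}^n)$ is a map of posets: adjoining a coordinate that is identically zero enlarges the zero set of each Grassmann--Pl\"{u}cker function by the same fixed set, so the weak-map order is preserved. Consequently it induces a simplicial map $\|\operatorname{MacP}(r,n-1)\|\to\|\operatorname{MacP}(r,n)\|$, and since the McCord maps $\varphi_{\operatorname{MacP}(r,m)}$ are natural with respect to poset maps the resulting square commutes; the vertical McCord maps being weak homotopy equivalences, the map induced on $\pi_i$ by the stabilization embedding of $\mathbb{S}$-Grassmannians is identified with the one induced by the simplicial stabilization map on order complexes. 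The stability theorem of~\cite{Anderson:homotopy} then says this map is an isomorphism on $\pi_i$ as soon as $n-1\geq r(i+2)$, i.e.\ $n>r(i+2)$, and a surjection as soon as $n-1\geq r(i+1)$, i.e.\ $n>r(i+1)$, which is the asserted range.

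Everything here is formal once the inputs from~\cite{Anderson:homotopy} and McCord are in hand. The points requiring care are the choice of compatible basepoints, the arithmetic converting the ranges of~\cite{Anderson:homotopy} into the displayed inequalities, and above all the verification that $\operatorname{Gr}(\operatorname{ph})$ and the stabilization embedding really do correspond, under McCord's theorem, to the map $\widetilde{\mu}_n$ and to simplicial stabilization respectively, i.e.\ to the maps for which~\cite{Anderson:homotopy} establishes the stated conclusions. I do not anticipate a genuine mathematical obstacle beyond this bookkeeping.
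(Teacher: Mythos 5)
Your argument is correct and matches the paper's (implicit) proof, which is just the one-liner ``Similar considerations allow us to deduce the following from the main results of~\cite{Anderson:homotopy}.'' You have correctly spelled out what those considerations are: identify $\operatorname{Gr}(\operatorname{ph})$ with $\mu_n$ and transfer the $2$-connectedness and stability statements of~\cite{Anderson:homotopy} across the McCord weak equivalences $\varphi_{\operatorname{MacP}(r,m)}$, using that the stabilization embedding is a poset map so that the relevant McCord squares commute, together with the index shift $n-1\geq r(i+2)\Leftrightarrow n>r(i+2)$.
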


We now wish to discuss the map induced by $ \operatorname{Gr}(r,
\mathbb{R}^{n}) \to \operatorname{Gr}(r,\mathbb{S}^{n})$ on mod 2
cohomology. There is a factorization
\begin{equation*}
\mathbb{F}_{2}[w_{1}, \dots , w_{r}] \xrightarrow{\alpha } H^{*}(
\operatorname{Gr}(r,\mathbb{S}^{n});\mathbb{F}_{2})
\xrightarrow{\beta } H^{*}(\operatorname{Gr}(r,\mathbb{R}^{n});
\mathbb{F}_{2})
\end{equation*}
where the first map is given by the Stiefel--Whitney classes of the
universal matroid spherical quasifibration of
\cite{Anderson-Davis}. Note that $\beta \circ \alpha $ is onto by
problem 7-B of \cite{Milnor-Stasheff}. We then can break the
question of whether $\beta $ is injective into two pieces. First, is
$\alpha $ onto, i.e.~are there exotic characteristic classes? Second,
is $\beta $ restricted to the image of $\alpha $ injective, i.e.~do the
relations in the mod 2 cohomology of the real Grassmannian hold over
$\mathbb{S}$?

\bibliographystyle{alpha}
\bibliography{ad}

\newcommand{\etalchar}[1]{$^{#1}$}
\begin{thebibliography}{BLVS{\etalchar{+}}99}

\bibitem[AD02]{Anderson-Davis}
Laura Anderson and James~F. Davis.
\newblock Mod 2 cohomology of combinatorial {G}rassmannians.
\newblock {\em Selecta Math. (N.S.)}, 8(2):161--200, 2002.

\bibitem[AD12]{Anderson-Delucchi}
Laura Anderson and Emanuele Delucchi.
\newblock Foundations for a theory of complex matroids.
\newblock {\em Discrete Comput. Geom.}, 48(4):807--846, 2012.

\bibitem[And98]{Anderson:homotopy}
L.~Anderson.
\newblock Homotopy groups of the combinatorial {G}rassmannian.
\newblock {\em Discrete Comput. Geom.}, 20(4):549--560, 1998.

\bibitem[And19]{Anderson:covectors}
L.~Anderson.
\newblock Vectors of matroids over tracts.
\newblock {\em Journal of Combinatorial Theory. Series A}, 161:236--270, 2019.

\bibitem[BB16]{Baker-Bowler}
Matthew Baker and Nathan Bowler.
\newblock Matroids over hyperfields, 2016.
\newblock \url{https://arxiv.org/abs/1601.01204}.

\bibitem[BLVS{\etalchar{+}}99]{BLSWZ}
Anders Bj\"orner, Michel Las~Vergnas, Bernd Sturmfels, Neil White, and
  G\"unter~M. Ziegler.
\newblock {\em Oriented matroids}, volume~46 of {\em Encyclopedia of
  Mathematics and its Applications}.
\newblock Cambridge University Press, Cambridge, second edition, 1999.

\bibitem[BS14]{BS}
Erwan Brugall\'e and Kristin Shaw.
\newblock A bit of tropical geometry.
\newblock {\em Amer. Math. Monthly}, 121(7):563--589, 2014.

\bibitem[CC15]{Connes-Consani}
Alain Connes and Caterina Consani.
\newblock Universal thickening of the field of real numbers.
\newblock In {\em Advances in the theory of numbers}, volume~77 of {\em Fields
  Inst. Commun.}, pages 11--74. Fields Inst. Res. Math. Sci., Toronto, ON,
  2015.

\bibitem[DHS15]{Delucchi-Saini}
Emanuele Delucchi, Linard Hoessly, and Elia Saini.
\newblock Realization spaces of matroids over hyperfields, 2015.
\newblock \url{https://arxiv.org/abs/1504.07109}.

\bibitem[EJS17]{EJS}
Christopher Eppolito, Jaiung Jun, and Matt Szczesny.
\newblock Hopf algebras for matroids over hyperfields, 2017.
\newblock \url{https://arxiv.org/abs/1712.08903}.

\bibitem[HP47]{Hodge-Pedoe}
W.~V.~D. Hodge and D.~Pedoe.
\newblock {\em Methods of {A}lgebraic {G}eometry. {V}ol. {I}}.
\newblock Cambridge, at the University Press; New York, The Macmillan Company,
  1947.

\bibitem[Jun17]{jjun17}
Jaiung Jun.
\newblock Geometry of hyperfields, 2017.
\newblock \url{https://arxiv.org/abs/1707.09348}.

\bibitem[Kra57]{Krasner}
Marc Krasner.
\newblock Approximation des corps valu\'{e}s complets de caract\'{e}ristique
  {$p\not=0$} par ceux de caract\'{e}ristique {$0$}.
\newblock In {\em Colloque d'alg\`ebre sup\'{e}rieure, tenu \`a {B}ruxelles du
  19 au 22 d\'{e}cembre 1956}, Centre Belge de Recherches Math\'{e}matiques,
  pages 129--206. \'{E}tablissements Ceuterick, Louvain; Librairie
  Gauthier-Villars, Paris, 1957.

\bibitem[Mac93]{MacPherson}
Robert MacPherson.
\newblock Combinatorial differential manifolds.
\newblock In {\em Topological methods in modern mathematics ({S}tony {B}rook,
  {NY}, 1991)}, pages 203--221. Publish or Perish, Houston, TX, 1993.

\bibitem[McC66]{mccord}
Michael~C. McCord.
\newblock Singular homology groups and homotopy groups of finite topological
  spaces.
\newblock {\em Duke Math. J.}, 33:465--474, 1966.

\bibitem[Mn{\"e}88]{Mnev}
N.~E. Mn{\"e}v.
\newblock The universality theorems on the classification problem of
  configuration varieties and convex polytopes varieties.
\newblock In {\em Topology and geometry---{R}ohlin {S}eminar}, volume 1346 of
  {\em Lecture Notes in Math.}, pages 527--543. Springer, Berlin, 1988.

\bibitem[MS74]{Milnor-Stasheff}
John~W. Milnor and James~D. Stasheff.
\newblock {\em Characteristic classes}.
\newblock Princeton University Press, Princeton, N. J.; University of Tokyo
  Press, Tokyo, 1974.
\newblock Annals of Mathematics Studies, No. 76.

\bibitem[MS15]{Maclagan-Sturmfels}
Diane Maclagan and Bernd Sturmfels.
\newblock {\em Introduction to tropical geometry}, volume 161 of {\em Graduate
  Studies in Mathematics}.
\newblock American Mathematical Society, Providence, RI, 2015.

\bibitem[Oxl11]{Oxley}
James Oxley.
\newblock {\em Matroid theory}, volume~21 of {\em Oxford Graduate Texts in
  Mathematics}.
\newblock Oxford University Press, Oxford, second edition, 2011.

\bibitem[Rui13]{Ruiz}
Amanda Ruiz.
\newblock {\em Realization spaces of phased matroids}.
\newblock ProQuest LLC, Ann Arbor, MI, 2013.
\newblock Thesis (Ph.D.)--State University of New York at Binghamton.

\bibitem[Vir10]{Viro}
O.~Viro.
\newblock Hyperfields for tropical geometry {I}. hyperfields and
  dequantization, 2010.
\newblock \url{https://arxiv.org/abs/1006.3034}.

\bibitem[Vir11]{Viro2}
O.~Viro.
\newblock On basic concepts of tropical geometry.
\newblock {\em Tr. Mat. Inst. Steklova}, 273(Sovremennye Problemy
  Matematiki):271--303, 2011.

\bibitem[Zie96]{ZieglerOMtoday}
G\"unter~M. Ziegler.
\newblock Oriented matroids today.
\newblock {\em Electron. J. Combin.}, 3(1):Dynamic Survey 4, 39 pp.\, 1996.

\end{thebibliography}

\Addresses

\end{document}